\newcommand{\mbf}[1]{\mathbf{#1}}
\newcommand*{\cA}{\mathcal{A}}
\newcommand*{\cB}{\mathcal{B}}
\newcommand*{\cE}{\mathcal{E}}
\newcommand*{\cF}{\mathcal{F}}
\newcommand*{\cS}{\mathcal{S}}
\newcommand*{\cU}{\mathcal{U}}
\newcommand*{\RR}{\mathbb{R}}
\newcommand*{\NN}{\mathbb{N}}
\newcommand*{\bD}{\mathbf{D}}
\newcommand*{\bE}{\mathbf{E}}
\newcommand*{\bH}{\mathbf{H}}
\newcommand*{\bI}{\mathbf{I}}
\newcommand*{\bP}{\mathbf{P}}
\newcommand{\crit}{\mathrm{crit }}
\newcommand{\vcrit}{\mathrm{vcrit }}
\newcommand{\tpitchfork}{%
  \vbox{
    \baselineskip\z@skip
    \lineskip-.52ex
    \lineskiplimit\maxdimen
    \m@th
    \ialign{##\crcr\hidewidth\smash{$-$}\hidewidth\crcr$\pitchfork$\crcr}
  }%
}
\newcommand{\critg}{\mathrm{crit}_1 g}
\newcommand{\Argminlocg}{\mathrm{argmin{\text -}loc}_1 g}
\newcommand{\argmin}{\mathrm{argmin}}
\newcommand{\argminloc}{{\rm argmin{\text -}loc}}
\newcommand{\innerproduct}[2]{\langle #1, #2\rangle}
\newcommand{\firstvar}{x}
\newcommand{\secondvar}{y}
\newcommand{\thirdvar}{z}
\newcommand{\grad}{\nabla}
\newcommand{\jac}{\mathrm{Jac}\ }
\newcommand{\jacnospace}{\mathrm{Jac}}
\newcommand{\graph}{\mathrm{graph}\,}
\newcommand{\algo}{\mathcal{A}}
\newcommand{\partialss}{\partial_{\secondvar\secondvar}}
\newcommand{\partialsf}{\partial_{\secondvar\firstvar}}
\newcommand{\prox}{\mathrm{prox}}
\newcommand{\dist}{\mathrm{dist}}
\newcommand{\fn}[1]{y^{(#1)}}
\newcommand{\fni}{y^{(i)}}
\theoremstyle{plain}
\newtheorem{theorem}{Theorem}[section]
\newtheorem{assumption}{Assumption}[section]
\newtheorem{lemma}[theorem]{Lemma}
\newtheorem{corollary}[theorem]{Corollary}
\newtheorem{proposition}[theorem]{Proposition}
\newenvironment{taggedassumption}[1]
 {\taggedassumptionx}
 {\endtaggedassumptionx}
\newenvironment{taggedalgorithm}[1]
 {\algorithm}
 {\endalgorithm}
\newtheorem*{theorem*}{Theorem}
\newtheorem*{proposition*}{Proposition}
\theoremstyle{definition}
\newtheorem{definition}[theorem]{Definition}
\newtheorem{remark}[theorem]{Remark}
\newtheorem{example}[theorem]{Example}
\newtheorem*{definition*}{Definition}
\newcommand{\TL}[1]{\textcolor{black}{#1}}
\title{Bilevel gradient methods and\\ the Morse parametric qualification condition}
\author{Jérôme Bolte\thanks{Toulouse School of Economics, University of Toulouse Capitole, Toulouse, France.} \and Tùng Lê \footnotemark[1] \and Edouard Pauwels\footnotemark[1] \and  Samuel Vaiter \thanks{CNRS \& Université Côte d'Azur, Laboratoire J. A. Dieudonné. Nice, France.}}
\date{}
\begin{document}

\maketitle

 \begin{abstract} We introduce the {\em Morse parametric qualification condition} for bilevel programming. Generic semi-algebraic functions are Morse parametric in a piecewise sense. Thus, bilevel programs with a Morse parametric lower level constitute a relevant intermediate class between strongly convex and fully generic lower levels. In this framework, we study  bilevel gradient algorithms with two strategies: the {\em single-step multi-step strategy}, which involves a sequence of steps on the lower-level problems followed by one step on the upper-level problem, and a \emph{differentiable programming strategy} that optimizes a smooth approximation of the bilevel problem. While the first is shown to be a biased gradient method on the problem with rich properties, the second, inspired by meta-learning applications, is less stable but offers simplicity and ease of implementation.
\end{abstract}
\sloppy
\section{Introduction}
Bilevel optimization provides a versatile formalism which encompasses a diversity of application settings \cite{dempe2002foundations,dempe2020bilevel}.
It has recently attracted significant attention in machine learning as it allows to formalize a broad spectrum of problem situations, including hyper-parameter tuning~\cite{pmlr-v48-pedregosa16}, meta-learning \cite{originalmaml,Franceschi2018BilevelPF}, data augmentation~\cite{rommel2022cadda}, deep equilibrium networks~\cite{bai2019deq} or neural architecture search~\cite{liu2019darts}. The present work investigates solution methods for bilevel optimization. We focus on bilevel gradient methods: first-order algorithms combining gradient steps for both levels of the problem. 

We consider bilevel optimization problems of the following form
\begin{equation}
    \label{eq:original-bilevel-optim}
    \tag{BL}
        \underset{\firstvar \in \RR^n,\secondvar \in \RR^m}{\min}  \quad f(\firstvar,\secondvar)\qquad \qquad \text{s.t.} \quad \secondvar \in  {\argmin}\; g(\firstvar,\cdot).
\end{equation}
where $f,g \colon \RR^n \times \RR^m \to \RR$ are respectively the {\em upper level objective} and the {\em lower level objective}. Throughout this paper, we assume that the lower level minimum is achieved.

\subsection{The Morse qualification condition and two algorithms}

\paragraph{Parametric Morse qualification condition:} A delicate aspect of bilevel programming is the gap between qualification regimes: it jumps from very strong conditions, when the lower level is strongly convex, to a great generality which must be handled by rather complex KKT machineries, see e.g., \cite{Allende2012SolvingBP,dempe2002foundations,dempe2020bilevel}. In the first case, applications are restrictive, in particular for machine learning, whereas in the  general case one needs to resort to complex qualification conditions to rule out pathologies even in the polynomial case \cite{bolte2024geometric}\footnote{The value function is typically discontinuous; any closed set can be expressed with smooth bilevel constraints; and polynomial bilevel optimization is $\Sigma_2^p$-hard, above NP in the polynomial hierarchy. For example, there is no numerically efficient certificate of stationarity or approximate stationarity for general bilevel problems.}. To partly remedy this difficulty, we introduce and study \emph{parametric Morse qualification conditions} (see \Cref{assumption:constraint-qualification}) for the lower-level function $g$. 
An intuitive picture is that as the parameters $x$ move, the lower-level landscape stays essentially the same, with an invariant ``Morse profile": the number and type of critical points remain constant, and each one traces a smooth branch as the parameter varies.

A first obvious interest is that this condition provides a meaningful intermediate between strongly convex and general nonconvex lower-level problems. There are moreover two other features that makes the setting both appealing and workable: (i) generically, a semi-algebraic lower-level objective \(g\) satisfies a \emph{piecewise} parametric Morse property (\Cref{prop:generic-ae-morse-parametric}); (ii) under this qualification condition, the lower-level critical and local-minima sets split into a finite union of \(C^2\) manifolds (\Cref{prop:crit-local-structures}), as depicted in \Cref{fig:crit-local-structures}. 

A consequence of the latter is that we obtain the following mixed-integer nonlinear programming relaxation for the bilevel problem (see also \Cref{tab:equivalence-two-relaxations}):
\begin{equation*}
	\begin{tabular}{lclcl}
		$\min f(\firstvar,\secondvar)$& $\xrightarrow{\text{relaxation}}$&$\min f(x,y)$&$\xleftrightarrow{\text{equivalent}}$&$\min f(x,y^{(i)}(x))$\\
		$\secondvar \in  {\argmin}\; g(\firstvar,\cdot)$&&$\secondvar \in  \argminloc\; g(\firstvar,\cdot)$&&$i \in \{1,\ldots,N\}$
	\end{tabular}
\end{equation*}
where $y^{(i)}$ are $C^2$ functions, $i = 1,\ldots, N$, highlighting the disconnected nature of the constraints in bilevel programming.   
Let us draw the attention of the reader on the fact that the relation $y = y^{(i)}(x)$ is implicitly defined through a nonconvex inner minimization, which must be approximated algorithmically, and the integer $N$ is typically unknown. Throughout this paper, we approximate lower-level solutions using a simple gradient-descent scheme (\Cref{algo:inner_gradient_descent}). 
Our main results (cf. \Cref{theorem:convergence-property-alternate-method,th:stabilityLocalMin,th:escapeInfinitySharpness}) relate to this \emph{local minima relaxation} (see \Cref{tab:equivalence-two-relaxations}).

\begin{figure}[t]
    \centering
    \includegraphics[width=0.8\linewidth]{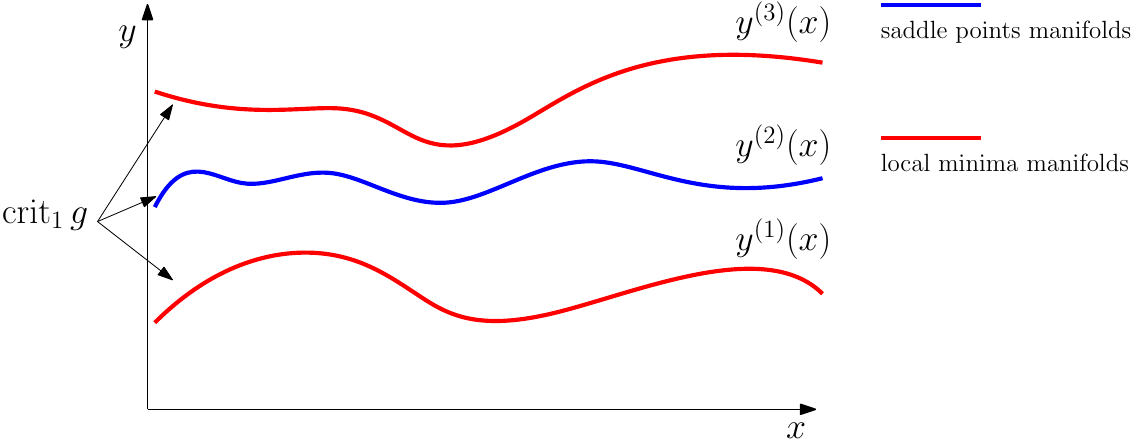}
    \caption{Structures of $\critg$ (see the definition in \S\ref{subsec:bilevel-problems}) under \Cref{assumption:constraint-qualification}; see also \Cref{prop:crit-local-structures}. The decomposition of the sets of critical points and local minima of $g$ as a finite union of manifolds is represented by several branches $y^{(1)}(x), y^{(2)}(x), y^{(3)}(x)$. Saddle-point manifolds (blue) and local-minima manifolds (red) illustrate the stratification of the critical set.}
    \label{fig:crit-local-structures}
\end{figure}

\paragraph{Convergence of a single-step multi-step bilevel gradient method:} Our first algorithmic result analyzes a simple alternating scheme: one outer step followed by multiple inner steps that approximately minimize the lower-level problem (see \Cref{algo:altenate-method}). The scheme can be recast as an \emph{inexact} method on the upper-level value function. Such strategies are common in bilevel gradient methods, with notable extensions \cite{arbel:hal-03869097,Ghadimi2018ApproximationMF,chen2021closing,Dagreou2022SABA,Dagreou2023SRBA,ji2024lowerbound,merchav2024fastalgorithmconvexcomposite}. 

Our analysis departs from previous work in two ways: (i) we allow general nonconvex lower-level problems with a possibly discontinuous argmin set-valued map; (ii) our assumptions are explicit and verifiable a priori.
In \Cref{theorem:convergence-property-alternate-method}, under the parametric Morse qualification condition and natural regularity and stepsize conditions, we prove that the method provides approximate solutions of the bilevel problem. This strengthens \cite{arbel:hal-03869097} in several directions and leverages the theory of inexact gradient methods for semi-algebraic optimization \cite{bolte2024inexact}.

\paragraph{Pseudo stability of a differentiable programming bilevel gradient method:}
Motivated by the analysis of Model-Agnostic Meta-Learning (MAML) models \cite{originalmaml,implicitMAML}, we consider a fully differentiable programming approximation of the lower level constraints in \Cref{algo:diagonal-method}, where the initialization of the lower level problem is treated as an upper level parameter. Such approaches were promoted in several works \cite{Franceschi2018BilevelPF,theory-multi-step,ji2020convergence,ji2022will,liu2021towards}. Although conceptually simple, the resulting minimization problems exhibit a fuzzy connection with the original bilevel problem. First, the differentiable programming approximation essentially erases the bilevel constraint as its stationary points are the same as the unconstrained upper level, see \Cref{prop:equivalence-single-level}. This negative result is balanced by a stability analysis that may explain the method’s partial empirical success. Indeed, well-qualified bilevel solutions  are pseudo-stable in the sense that, if the algorithm is to leave them, it generally needs many iterations to exit their neighborhood (\Cref{th:stabilityLocalMin}). Moreover, unconstrained local minimizers, which are irrelevant to the bilevel program, are, in practice, hard to reach (\Cref{th:escapeInfinitySharpness}). Taken together, these two facts are rather positive and point in the same direction. Yet  the picture is imperfect and instabilities can still arise, consistent with meta-learning evidence \cite{antoniou2018how}. This suggests either adjusting differentiable-programming pipelines with appropriate safeguards or modifications or turning to classical  gradient methods (as in the previous approach).

\subsection{Related work} 

Gradient methods and steepest descent methods are natural candidate solutions for bilevel optimization. Early works on the topic include differentiation of second-order local minima \cite{kolstad1990derivative}, bilevel optimality conditions, and the design of steepest descent methods \cite{savard1994steepest}. These studies focus on unique, well-posed lower level solutions, a simplifying hypothesis which allows to reduce the bilevel problem to that of the minimization of a locally Lipschitz objective for which general purpose methods could be considered \cite{pang1991minimization}. Overviews of bilevel programming contributions, prior to the latter machine learning developments, can be found in \cite{vicente1994bilevel,colson2007overview}.

The motivations for machine learning applications \cite{pmlr-v48-pedregosa16,Franceschi2018BilevelPF,lorraine2020MillionsofHyperparameters} have triggered the investigation of dedicated solution methods, the vast majority of them being gradient techniques \cite{arbel:hal-03869097,Ghadimi2018ApproximationMF,chen2021closing,Dagreou2022SABA,Dagreou2023SRBA,ji2024lowerbound}, most of them with a stochastic flavour. These more recent works have specificities compared to the broader bilevel optimization literature. First, the proposed theoretical analyses take explicitly into account the fact that the lower level is solved by an algorithm, hence producing only an approximate solution to the lower level problem. Second, the fact that the lower level solution algorithm is explicitly considered allows us to leverage all variations of algorithmic differentiation \cite{griewank2008evaluating} to obtain not only an approximate solution to the lower level, but also its derivative with respect to the upper level parameters, with limited overheads. This concept is known as the cheap gradient principle and the overall algorithmic differentiation framework sometimes called \emph{differentiable programming}. This represents a conceptual advantage and has a lot of practical benefits. The bilevel gradient algorithms which we study follow these principles: the lower level solution algorithm is explicit and we consider that it is possible to propagate derivatives through the algorithm. Another specificity of this literature is the possibility of considering the initialization of a lower level gradient scheme as an upper level parameter to be optimized. 
This is common in variants of Model-Agnostic Meta-Learning (MAML) models~\cite{originalmaml,implicitMAML} and was explicitly considered by several authors  \cite{Franceschi2018BilevelPF,theory-multi-step,ji2020convergence,ji2022will,liu2021towards}. We will see in the study of our differentiable programming bilevel gradient method that this has benefits, but also drawbacks.

In terms of theoretical guarantees, a majority of existing studies make simplifying assumptions ensuring the uniqueness and smoothness of the lower level solution, such as strong convexity. The bilevel problem then reduces to a smooth or at least Lipschitz composite optimization problem \cite{pang1991minimization,ye1995optimality,Mordukhovich2020}. Beyond strong convexity, there is no complete general treatment of nonconvex lower level problems, with possibly multiple lower level solutions.
A specificity of our analysis is the generality of the lower-level problem, as \textbf{we do not assume neither convexity nor the existence of a unique minimizer}. This lack of structure significantly makes algorithm design and theoretical guarantees harder. Some approaches introduce auxiliary initializations to guide optimization \cite{liu2021towards}, while others rely on interior-point methods to mitigate computational costs \cite{liu2021valuefunction}. \TL{Certain works shift their focus on finding approximated solutions and combine the single-level penalized functions uniform growth conditions on the lower level problem to establish convergence properties \cite{shen2023penalty}.} The ambiguity arising from multiple lower-level solutions can be addressed using selection maps (to allow to prove an implicit function theorem even in degenerate cases) \cite{arbel:hal-03869097}. Reformulating bilevel problems as single-level ones introduces nonsmooth constraints requiring specialized algorithms \cite{guihua2014solving}. From an analytical perspective, variational analysis and generalized differentiation provide optimality conditions when classical constraint qualifications fail \cite{Mordukhovich2020}, often requiring calmness conditions \cite{ye1995optimality} and sensitivity analysis to properly characterize solutions \cite{dempe2012sensitivity}. The assumption of a unique lower-level solution is typically unrealistic and some work~\cite{liu2020generic} propose to reformulate bilevel optimization as an alternative kind of optimization. Finally, bilevel optimization is provably inherently hard, with bilevel smooth problems being as difficult as general lower semicontinuous minimization and bilevel polynomial problems reaching complexity beyond NP-hard problems \cite{bolte2024geometric}.

\section{Presentation of the problem and main algorithms}

\subsection{The bilevel problem and the lower-level gradient routine}\label{subsec:bilevel-problems}

Given $\firstvar$ in $\RR^n$, the global minimizer of $g_x:=g(\firstvar,\cdot)$ does not have a closed-form expression and one often resorts to algorithms to solve \emph{approximately} the lower level problem. In this work, we assume to have access to an iterative algorithm. Given $\firstvar \in \RR^n$, one initializes $\secondvar_0 = z$ and run an iterative method $y_{i+1} = \algo(x,y_i)$ for a fixed number of iterations $i = 1,\ldots, k$. 
Here, our algorithm $\algo$ is chosen to be the classical Cauchy's gradient descent (GD) algorithm with a fixed stepsize. Denoting by $\algo^k$, $k$ iterations of the algorithm, we consider $y = \algo^k(x,z)$ where $z$ is the initialization of the algorithm, see \Cref{algo:inner_gradient_descent}. 

\begin{taggedalgorithm}{LLD}[H]
	\centering
	\caption{Lower level descent $\algo^k$ ($k$-step fixed-stepsize gradient descent)} 
	\label{algo:inner_gradient_descent}
	\begin{algorithmic}[1]
        \REQUIRE $\firstvar \in \RR^n, \thirdvar \in \RR^m, \alpha_g > 0, k \in \NN$. 
		\STATE $\secondvar_0 \gets \thirdvar$.
        \FOR{$j = 1, \ldots, k$}
            \STATE $\secondvar_{j} \gets \secondvar_{j-1} - \alpha_g \grad_\secondvar g(\firstvar, \secondvar_{j - 1})$ \hfill $\rhd$\COMMENT{gradient descent update}
        \ENDFOR
        \RETURN $y_k$
	\end{algorithmic}
\end{taggedalgorithm}

We will work under the following assumptions which ensure that for any $k \in \NN$, $\algo^k$ is a continuously differentiable mapping. 
\begin{assumption}[Regularity of the upper and lower functions]
Let $m,n > 0$.\\
(i) The functions $f,g \colon \RR^n \times \RR^m \to \RR$ are $C^2$ and $C^3$ respectively, and semi-algebraic. \\
(ii) For all $x \in \RR^n$, $g_x$ has a $L_g$-Lipschitz contiuous gradient and is coercive.\\ 
 (iii) The lower level step size parameter $\alpha_g$ satisfies $0 < \alpha_g L_g < 1$.
	\label{assumption:lowerLevel}
\end{assumption}

Since $g_x(\cdot) = g(x, \cdot)$ is not assumed to be convex,  \Cref{algo:inner_gradient_descent} merely approaches critical points of $g_x$. On the other hand, it is known that, generically the algorithm finds local minima \cite{pemantle1990nonconvergence,goudou2009gradient}.

This leads us to consider two fundamental relaxations of our original bilevel program: {\em the critical set relaxation} and {\em the local-min relaxation}. The latter is already considered and studied with the so-called Karush-Kuhn-Tucker (KKT) approach \cite{Allende2012SolvingBP}.

Denote by $\critg$ the (set-valued) mapping $x \rightrightarrows \crit\, g_x $, and $\Argminlocg$ the (set-valued) mapping $x\rightrightarrows \argminloc\, g_x$, our problems can actually be rewritten:
\begin{table}[h]
    \centering
    \begin{tabular}{>{\centering\arraybackslash}m{0.43\textwidth}|>{\centering\arraybackslash}m{0.5\textwidth}}
         \toprule
         Critical points relaxation & Local minima relaxation \\
         \midrule
         \begin{minipage}{0.43\textwidth}
            \begin{equation}
            \label{eq:crit-bilevel-optim}
            \tag{BL-crit}
            \begin{aligned}
                \min  \, f(\firstvar,\secondvar)\\
               (\firstvar,\secondvar) \in \,  \graph \critg
            \end{aligned}
            \end{equation}
        \end{minipage} & 
        \begin{minipage}{0.5\textwidth}
            \begin{equation}
            \label{eq:loc-bilevel-optim}
            \tag{BL-loc}
            \begin{aligned}
                \min f(\firstvar,\secondvar)\\
                (\firstvar,\secondvar) \in  \graph \Argminlocg
            \end{aligned}
            \end{equation}
        \end{minipage} \\
        \bottomrule
    \end{tabular}
\end{table}

\subsection{Two bilevel gradient algorithms}

We consider two bilevel gradient methods with different strategies.
For a given $k \in \NN$, we consider the approximate value function objective,
\begin{align}
    \tag{DP-BL}
    \label{eq:algorithmic-bilevel}
	\varphi^k(\firstvar,\thirdvar) := f(\firstvar, \algo^k(\firstvar,\thirdvar))
\end{align}
which may be seen as an unconstrained smooth optimization problem approximating \eqref{eq:original-bilevel-optim}. 
Indeed, due to the convergence properties of the gradient method for semi-algebraic functions \cite{attouch2013Convergence}, under \Cref{assumption:lowerLevel}, the corresponding function has the property that the limit below exists and defines a function $\varphi^\infty$ as follows:
\begin{align*}
	\lim_{k\to \infty} \varphi^k(x,z)&:=\varphi^\infty(x,z) \\
	\displaystyle \graph \varphi^\infty&=\left\{\left((x,z),f(x,y(x,z))\right):\TL{y(x,z) = \lim_{k \to \infty} \cA^k(x,z)\in \crit g_x}\right\}.
\end{align*}

\paragraph{Single-step Multi-step strategy.} We first consider a two-stage method: one approximates the lower-level critical points (or local minima) for a fixed $x$, in our case, by \Cref{algo:inner_gradient_descent} with $\cA^k$, then 
performs a gradient step on the upper level. This is what we call, in this work, the {\em Single-step Multi-step} strategy.

\begin{taggedalgorithm}{SMBG}[H]
	\centering
	\caption{Single-step Multi-step bilevel Gradient Algorithm} 
	   \label{algo:altenate-method}
	   \begin{algorithmic}[1]
        \REQUIRE $\alpha_f > 0, \alpha_g > 0, k \in \NN, L \in \NN$. 
		\STATE Initialize $(x_0, y_0)$
        \FOR{$\ell = 1, \ldots, L$}
            \STATE \textcolor{black}{$y_\ell \gets \cA^k(x_{\ell - 1}, y_{\ell - 1})$} \hfill $\rhd$\COMMENT{$k$ GD steps for lower-level $g(x_{\ell - 1}, \cdot)$}
            \STATE $x_{\ell} \gets x_{\ell - 1} - \alpha_f \nabla_x \varphi^k(x_{\ell - 1}, y_{\ell - 1}))$ \hfill $\rhd$\COMMENT{one GD step for upper-level}
        \ENDFOR
        \RETURN $(x_L, y_L)$
	\end{algorithmic}
\end{taggedalgorithm}

The alternation of update steps between each level is classical in bilevel programming. Variations close to the proposed description were explored recently motivated by machine learning applications \cite{arbel:hal-03869097,Ghadimi2018ApproximationMF,chen2021closing,Dagreou2022SABA,Dagreou2023SRBA,ji2024lowerbound}. These works, however, harness the implicit function theorem and approximate implicit differentiation (AID), we consider the most basic version of this idea.

The discrepancy between the type of update and number of iterations dedicated to the two levels is natural: in the first place the lower level constraint must be (approximately) enforced, then and only then, progress can be made on the upper level.

\paragraph{Differentiable programming strategy}
Since $\varphi^k$ approximates $\varphi^\infty$ which corresponds to the critical point relaxation \eqref{eq:crit-bilevel-optim} of problem \eqref{eq:original-bilevel-optim}, it is tempting to minimize directly the unconstrained, smooth, $\varphi^k$ for a given fixed $k$. 
This is the main idea of the {\em differentiable programming} strategy. 
\begin{taggedalgorithm}{DPBG}[H]	
    \centering
	\caption{Differentiable Programming bilevel Gradient Method} 
	   \label{algo:diagonal-method}
	   \begin{algorithmic}[1]
        \REQUIRE $\alpha_f > 0, \alpha_g > 0, k \in \NN, L \in \NN$. 
		\STATE Initialize $(x_0, z_0)$.
        \FOR{$\ell = 1, \ldots, L$}
            \STATE $\displaystyle \begin{pmatrix}
                \textcolor{black}{z_{\ell}} \\ x_{\ell}
            \end{pmatrix} = \begin{pmatrix}
                \textcolor{black}{z_{\ell-1} - \alpha_f \nabla_z \varphi^k (x_{\ell - 1}, z_{\ell - 1})} \\ x_{\ell-1} - \alpha_f \nabla_x \varphi^k (x_{\ell - 1}, z_{\ell - 1})
            \end{pmatrix}$ 
        \ENDFOR
        \RETURN $(x_L, z_L)$
	\end{algorithmic}
\end{taggedalgorithm}

Such strategies are widespread in MAML~\cite{originalmaml} which formulate bilevel problems to find a common algorithmic initialization, good for several learning tasks. They combine the minimization of the smooth approximation $\varphi^k$ \cite{Franceschi2018BilevelPF} with the idea that derivatives of $\varphi^k$ are accessible at a moderate cost using algorithmic differentiation \cite{maclaurin2015gradient,mehmood2020automatic,grazzi2020iteration}. In a broader context, the idea of minimizing the surrogate $\varphi^k$ jointly in $x$ and $z$ appears in \cite[Algorithm 1]{liu2021towards}. In particular, they argue that when the lower-level problem $g_x$ admits multiple optimal solutions (or critical points), it is necessary to also update $z$ w.r.t.~$\varphi^k$ to avoid initializing in a ``bad'' region, resulting in a sub-optimal $y^\star(x)$ (remind that we want to minimize $f$ w.r.t. both $x$ and $y$ in \eqref{eq:original-bilevel-optim}).

\begin{remark}[Machine learning perspective]
	For both algorithms, the gradient estimation for $\algo^k$ (cf. \Cref{algo:inner_gradient_descent}) and its compositions is called iterative differentiation (ITD) in the context in machine learning. This relies on an intensive usage of algorithmic differentiation routines \cite{gilbert1992automatic}. In addition,  \Cref{algo:altenate-method} and \Cref{algo:diagonal-method} combine $k$ gradient steps on the inner problem with an algorithmic step on the outer problem, a feature which is described as a {\em double loop} strategy in the machine learning literature. This literature focuses on complexity estimates and stochastic approximation aspects. In contrast, the main specificity of our work is to consider a simpler algorithmic setting, but a broader class of problems with a focus on the difficulties caused by the absence of lower level convexity.
\end{remark}

\section{Morse-parametric constraint qualifications and the lower level gradient method}
\subsection{Morse-parametric constraint qualifications} Let us now consider the fundamental subject of constraint qualification. To establish necessary conditions for local minimizers, works on this question \cite{ye1995optimality,dempe2012sensitivity,Mordukhovich2020,hendrion2011calmness} requires various assumptions such as the so-called \emph{partial calmness} \cite{ye1995optimality} and the inner-semicontinuity of the set-valued mappings related to the mapping $x \rightrightarrows \argmin \,g_x$ \cite{dempe2002foundations,Mordukhovich2020,hendrion2011calmness}. Despite the mathematical insights of previous works, finding a class of optimization problems that satisfy these requirements is non-trivial.

\smallskip

In this section, we introduce a family of nonconvex bilevel problems whose constraints qualification is much simpler. In particular, we require the lower-level function $g$ to satisfy the so-called \emph{parametric Morse} property, whose definition is given below. 
\begin{definition}[Morse and parametric Morse functions]
    \label{def:parametric_morse}
    Let $\phi: \RR^m \to \RR$ be a $C^2$ function. We say that $\phi$ is a \emph{Morse function} if its Hessian matrix $\nabla^2 \phi(\secondvar)$ is invertible for any critical point $y$ of $\phi$, i.e., $\nabla \phi(y) = 0$. Let $g: U \times \RR^m \to \RR$ be a $C^2$ function where $U \subset \RR^n$ is open and connected. The function $g$ is called \emph{parametric Morse} on $U$ if, for all $x \in U$, $g_x := g(x, \cdot)$ is a Morse function. If $U = \RR^n$, the function is simply called parametric Morse.
\end{definition}

This is a particular case of the Morse-Bott parametric property, proposed in \cite{arbel:hal-03869097}. Further variants in bilevel optimization and machine learning were proposed in \cite{chen2024finding,kwon2023penalty}. Here are two examples of parametric Morse functions:

\begin{example} 
    \label{example:morse-parametric}
    (a) \emph{Strongly convex functions}: 
    If $g_x$ is a $C^2$ and $\mu$-strongly convex for all $\firstvar \in \RR^n$, then $g$ is parametric Morse since $\partialss f(\firstvar,\secondvar)  \succeq \mu \bI$. This class is usually assumed in many works on lower-level convex bilevel optimization such as \cite{Dagreou2022SABA,Dagreou2023SRBA}.\\
(b) \emph{Composition of Morse functions and diffeomorphisms}: 
If $g = F \circ h$ where $F: \RR^m \to \RR$ is a $C^2$, Morse function and $h: \RR^n \times \RR^m \to \RR^m$ is $C^2$, each mapping $h(\firstvar, \cdot)$ is a diffeomorphism from $\RR^m$ to itself, then $g$ is parametric Morse.
\end{example}

We introduce the constraint qualification assumptions for the bilevel optimization in \ref{assumption:constraint-qualification}. 
\begin{taggedassumption}{Morse QC}[Morse-parametric qualification conditions]
    \label{assumption:constraint-qualification}
    The function $g$ is parametric Morse and the set-valued mapping $\critg: x \rightrightarrows \crit\, g_x $ is non-empty and locally bounded.
\end{taggedassumption}

\subsection{Relevance of the parametric Morse property}
\label{sec:genericity-ae-morse-parametric}
In mathematical programming, constraint qualifications have two distinctive features. First, they allow to obtain an algebraic description of problem solutions, and second, they hold true for the typical, or generic, problems of the class. For example, Slater's interiority condition for convex programs can always be obtained by a slight perturbation of inequality constraints and is sufficient to ensure that all solutions are described using KKT conditions. This has numerous extensions in nonlinear programming, see for example \cite{bolte2018qualification} and references therein. In the bilevel programming context, qualification conditions satisfying these two qualitative features are missing.

We will see that, while \Cref{assumption:constraint-qualification} allows for a useful description of problem solutions, parametric Morse functions are not dense in the class of $C^2$ functions, and hence cannot be considered as typical in the $C^2$ topology (see \Cref{ex:morseParamNongeneric}). 
However, we argue that the parametric Morse property possibly has a central role in bilevel optimization. Indeed, a generic semi-algebraic bilevel problem satisfies the following {\em piecewise parametric Morse} property. 

\label{sec:generic-morse-parametric-ae}
\begin{definition}[Piecewise parametric Morse functions]
    \label{def:ae-parametric-morse}
    A $C^2$ function $g: \RR^n \times \RR^m \to \RR$ is \emph{piecewise parametric Morse} if $g_x := g(x, \cdot)$ is Morse, for every $x$ in an open dense set.
\end{definition}
The main differences with parametric Morse property are 1/ that the property is required to hold on a dense {and open}
subset, and 2/ this set is not required to be connected.
The piecewise parametric Morse property turns out to be generic for semi-algebraic functions.
\begin{proposition}[Genericity of piecewise parametric Morse functions]
    \label{prop:generic-ae-morse-parametric}
	Given a $C^2$, semi-algebraic function $g: \RR^n \times \RR^m \to \RR$, the set of vectors $a \in \RR^m$ such that $g(x,y) - \innerproduct{a}{y}$ is piecewise parametric Morse is semi-algebraic and dense in $\RR^m$ (hence residual and of full measure).
\end{proposition}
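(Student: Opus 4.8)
The plan is to reduce the statement to Sard's theorem together with the dimension theory of semi-algebraic sets. The key observation is that $g(x,y) - \innerproduct{a}{y}$ is Morse in $y$ exactly when $a$ is a regular value of the partial gradient map. Concretely, set $\Phi(x,y) := \nabla_y g(x,y)$ and, for fixed $x$, write $\Phi_x := \Phi(x,\cdot)\colon \RR^m \to \RR^m$. Since the linear term $-\innerproduct{a}{y}$ shifts gradients but not Hessians, a point $y$ is critical for $y \mapsto g(x,y)-\innerproduct{a}{y}$ iff $\Phi_x(y) = a$, and there the Hessian $\nabla^2_{yy} g(x,y)$ equals the Jacobian $D\Phi_x(y)$. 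Hence $g(x,\cdot)-\innerproduct{a}{\cdot}$ is Morse iff $a$ is a regular value of $\Phi_x$, and the whole proposition becomes a parametric Sard statement, with one genericity layer in $a$ and one (the piecewise-Morse ``open dense'' requirement) in $x$.

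First I would introduce the semi-algebraic singular set $\Sigma := \{(x,y) : \det \nabla^2_{yy}g(x,y) = 0\}$ (semi-algebraic because $g$ is $C^2$ and semi-algebraic, so the second derivatives are semi-algebraic and the determinant is polynomial in them) and the critical-value correspondence $C := \{(x,a) : \exists y,\ \Phi(x,y)=a \text{ and } (x,y)\in\Sigma\}$, the image of $\Sigma$ under the semi-algebraic map $(x,y)\mapsto(x,\Phi(x,y))$, hence semi-algebraic. For fixed $x$ the slice $C_x$ is precisely the set of critical values of $\Phi_x$. Here is where Sard enters: since $g\in C^2$, each $\Phi_x$ is $C^1$ from $\RR^m$ to $\RR^m$, so by Sard's theorem $C_x$ has Lebesgue measure zero, and being semi-algebraic it satisfies $\dim C_x \le m-1$. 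The fiber-dimension bound for semi-algebraic maps, applied to the projection $C \to \RR^n$, then forces $\dim C \le n + (m-1) = n+m-1$.

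Next I would run the complementary projection $q\colon C \to \RR^m$, $(x,a)\mapsto a$. For fixed $a$ the bad parameters form $B_a := \{x : (x,a)\in C\}$, the set of $x$ for which $\Phi_x$ has a critical point over $a$; equivalently $g(x,\cdot)-\innerproduct{a}{\cdot}$ is Morse exactly when $x\notin B_a$, and $\dim q^{-1}(a) = \dim B_a$. Using the semi-algebraic fiber-dimension theorem — for each $d$ the locus $S_d := \{a : \dim B_a = d\}$ is semi-algebraic with $\dim q^{-1}(S_d) = \dim S_d + d$ — together with $\dim C \le n+m-1$, I obtain $\dim S_d \le (n+m-1) - d$ for every $d$. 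In particular $\mathrm{Bad} := \{a : \dim B_a \ge n\} = \bigcup_{d\ge n} S_d$ is semi-algebraic of dimension $\le m-1$. For $a\notin\mathrm{Bad}$ the set $B_a$ is semi-algebraic of dimension $<n$, hence nowhere dense, so $\RR^n\setminus\overline{B_a}$ is open, dense, and contained in the Morse locus; thus $g(x,y)-\innerproduct{a}{y}$ is piecewise parametric Morse. Conversely, being piecewise parametric Morse forces $B_a$ nowhere dense, i.e. $\dim B_a<n$, so the desired set is exactly $\RR^m\setminus\mathrm{Bad}$: it is semi-algebraic, and since $\overline{\mathrm{Bad}}$ is closed, nowhere dense and of measure zero, we obtain density, full measure and residuality simultaneously.

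The main obstacle I anticipate is the bookkeeping around the degenerate case $\det\nabla^2_{yy}g\equiv 0$, in which $\Sigma$ is all of $\RR^{n+m}$ and no $\Phi_x$ is ever a submersion. The argument above is designed to absorb this uniformly: even when $\Phi_x$ is everywhere critical, Sard still gives $\dim C_x = \dim \Phi_x(\RR^m) \le m-1$, so the count $\dim C \le n+m-1$ is unaffected, and for generic $a$ the fiber $B_a$ is just the low-dimensional set of $x$ for which $a$ is attained as a critical value — a generic $a$ is simply not attained, so $g(x,\cdot)-\innerproduct{a}{\cdot}$ has no critical points and is vacuously Morse. The only technical care needed is to invoke the correct low-regularity version of Sard (valid for $C^1$ maps in the equidimensional case) and the semi-algebraic fiber-dimension and dimension-additivity theorems, both standard in o-minimal/semi-algebraic geometry.
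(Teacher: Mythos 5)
Your proposal is correct and follows essentially the same route as the paper: both form the semi-algebraic set of pairs $(x,a)$ with $a$ a critical value of $\nabla_y g(x,\cdot)$, bound its dimension by $n+m-1$ via Sard applied fiberwise in $x$ (the paper passes through Fubini--Tonelli plus semi-algebraicity where you invoke the fiber-dimension theorem directly, an immaterial difference), and then project onto the $a$-coordinate, using the semi-algebraic fiber-dimension/Hardt-type result to show the exceptional set of $a$ has dimension at most $m-1$. Your explicit identification of the good set as exactly $\RR^m\setminus\mathrm{Bad}$ is a slightly cleaner way to get semi-algebraicity than the paper's appeal to Tarski--Seidenberg, but the argument is the same.
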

Before proving the result, we recall that a semi-algebraic set is the finite union of solution sets of systems of polynomial inequalities. A semi-algebraic piecewise Morse parametric function is, therefore, Morse parametric in the sense of \Cref{def:parametric_morse} on each of the finitely many connected components of semi-algebraic dense {and open} set required by \Cref{def:ae-parametric-morse}. This result suggests that a general description of the solution set of a generic bilevel program requires a deep understanding of the simpler Morse parametric assumption. Therefore, this property appears as a relevant intermediate between widely studied strongly convex lower levels, to the most general nonconvex lower levels.
\begin{proof}[Proof of \Cref{prop:generic-ae-morse-parametric}]
	The fact that the resulting set is semi-algebraic is a direct consequence of the Tarski-Seidenberg principle, see for example \cite{coste2000introduction}.

    Define $A:= \{(x,z) \mid z = \nabla_y g(x,y) \text{ where } \det(\nabla^2_{yy} g(x,y)) = 0\}$. Since $g$ is semi-algebraic, so are $A$ and its fibers i.e., $A(x) = \{z \mid (x,z) \in A\}$. Moreover, for any $x$, $A(x)$ is also the set of critical values of the mapping $\nabla_y g_x = \nabla_y g(x, \cdot)$, hence of zero Lebesgue measure (thanks to Sard theorem). By Fubini-Tonelli theorem, the Lebesgue measure of $A$ is zero as well. Combine with the fact that $A$ is semi-algebraic, $A$ is of dimension at most $(m + n - 1)$.

    Consider the projection mapping (onto the last $m$ coordinate) $\pi : A \to \RR^m$. As a consequence of Hardt's triviality theorem \cite[Corollary 4.2]{coste2000introductionSA}, the set 
    \begin{equation*}
        P := \{z \in \RR^m \mid \dim \pi^{-1}(z) = n\}
    \end{equation*}
	is semi-algebraic of dimension at most $\dim A - \TL{n}  \leq \TL{m}-1$. This concludes the proof since for a generic $z \in \RR^m$, $z \not \in P$, therefore $\dim \pi^{-1}(z) <n$ and $g_x$ is Morse for any $x \not \in \pi^{-1}(z)$.
\end{proof}

Finally, we also illustrate why parametric Morse functions property are not dense among $C^2$ functions in the following example.
\begin{example}
	\label{ex:morseParamNongeneric}
    Consider the function $g: \RR^2 \to \RR: (x,y) \mapsto (x - y^2)^2$. This function is not Morse parametric since its critical sets is given by:
    \begin{equation*}
        \crit \, g_x := \begin{cases}
            0 & x \leq 0\\
            \{0, \pm \sqrt{x}\} & x > 0
        \end{cases}.
    \end{equation*}
    Hence, $g_x$ is not Morse at $x = 0$. Moreover, an arbitrarily $C^2$ small perturbation $e$ cannot make the function $g + e$ parametric Morse. 
    Here, the size of a $C^2$ perturbation $e$, can be considered as $\|e\|_{C^2} = \sup_{x,y} |e(x,y)| + \| \nabla e(x,y)\| + \|\nabla^2 e(x,y)\|$. Note that 
    \begin{align}
        \label{eq:ineqComponentsMors}
        \sup_{x,y} |e_x(y)| + |e_x'(y)| + |e_x''(y)| \leq \sup_{x,y} |e(x,y)| + \| \nabla e(x,y)\| + \|\nabla^2 e(x,y)\|,
    \end{align}
    since $\|\nabla e(x,y)\| \geq |e_x'(y)|$ and $\|\nabla^2 e(x,y)\| \geq |e_x''(y)|$ for all $x,y \in \RR$. In other words $\|e\|_{C^2} \geq \sup_x \|e_x\|_{C^2}$. Assume that $\|e\|_{C^2} < 1$.

    Returning to $g$, we have $g'_x(y) = 4y(y^2-x)$ and $g''_x(y) = 12y^2-4x$. It is clear that the critical points of $g_x + e_x$ are locally bounded as $\lim_{|y| \to + \infty} \inf_{x \in K} |g'_x(y)| = +\infty$ for any compact interval $K$. Let us show that the number of critical points of $g_x + e_x$ cannot be constant in $x$.

    For $x = -1$, it is clear that $g_{-1} + e_{-1}$ is strongly convex as $g_{-1}''(y) \geq 4$  and $|e_{-1}''(y)| \leq 1$
 for all $y$. So it has a single critical point which corresponds to its global minimum. Let us consider $x = 1$, we have the following variations for each interval $I$ as below, chosen to contain none or a single critical point of $g_1$
    \begin{center}
    \begin{tabular}{c|c|c|c|c|c|c|c}
        $I$ &$(-\infty, -2]$ & $\left[-2,\frac{-1}{\sqrt{2}}\right]$ &  $\left[\frac{-1}{\sqrt{2}}, \frac{-1}{2}\right]$&$\left[\frac{-1}{2},\frac{1}{2}\right]$ & $\left[\frac{1}{2}, \frac{1}{\sqrt{2}}\right]$ & $\left[\frac{1}{\sqrt{2}}, 2\right]$ & $[2,+\infty)$ \\[1.5ex]\hline&&&&&&&\\[-2ex]
         $g_1'(I)$& $\leq-24$&$\supset [-1,1]$ &$\geq 1$&$\supset [-1,1]$& $\leq -1$&$\supset [-1,1]$&$\geq 24$ 
    \end{tabular}     
    \end{center}
    From this table, we see that since $|e_1'(y)| <1$ for all $y$, for each $I$ as above such that $g_1$ has no critical point, $g_1 + e_1$ also has no critical point, and for each $I$ such that $g_1$ has one critical point, $g_1 + e_1$ has at least one critical point. We conclude that $g_1 + e_1$ has at least three critical points.

    Finally, the contradiction can be deduced from \Cref{lemma:extension-to-Rn} (see \Cref{section:structure-critical-points} for more detail): supposing that $g$ is parametric Morse, since for each critical point of $g_1$, one can find a smooth function defined globally on $\RR^2$ that gives the solution to $\nabla_y g(x,y) = 0$, $g_{-1}$ must have at least three distinct critical points (otherwise, these functions are identical due to the uniqueness stated in \Cref{lemma:extension-to-Rn}). This is, however, not possible due to our argument about $g_{-1}$.
\end{example}

\TL{All following results in \Cref{section:structure-critical-points,sec:lower-level-gd,sec:single-step-multi-step} only hold under the assumption of parametric Morse property (cf. \Cref{def:parametric_morse}), and not its piecewise version (cf. \Cref{def:ae-parametric-morse}). Nevertheless, as we argued earlier, due to the genericity of the piecewise parametric Morse property, the parametric Morse property becomes an important intermediate case that is worth studying. Moreover, given a piecewise parametric Morse function $g$, if one supposes that the iterates always stay in the same connected component of the set $x$ where $g_x$ is Morse, then all of our results transfer naturally. An important future extension is to understand better the case where the iterates can ``jump'' between the connected components and identify the conditions in which interesting theoretical results (such as convergence) can be established.   
}
\subsection{Structure of the crit and argmin-loc mappings under \Cref{assumption:constraint-qualification}}
\label{section:structure-critical-points}
The interest of \Cref{assumption:constraint-qualification} (and also \Cref{def:parametric_morse}) lies in the fact that the constraint $\TL{y \in \crit \,g(x, \cdot)}$ can be written equivalently as: $y \in \{\fni(x) \mid i = 1, \ldots, N\}$ where $N$ is a natural number and $\fni, i = 1, \ldots, N$ are $C^1$ functions. This representation can actually be made of constant index, we only distinguish local minima from other critical points.

\begin{proposition}[Critical points and local minima are finite union of manifolds]
    \label{prop:crit-local-structures}
	Under \Cref{assumption:constraint-qualification} and $g$ being semi-algebraic $C^k$ ($k > 1$), there exists integers $M\geq N \geq 0$, and $M$ semi-algebraic $C^{k-1}$ functions $\fn{1}, \ldots, \fn{N}, \ldots, \fn{M} \colon \RR^n \to \RR^m$ with distinct values at each points such that\footnote{We use the convention that for $N = 0$ or $M=0$ the union is empty}:
	 \begin{align}
        & \:\: \graph \critg = \bigcup_{i = 1}^M \graph \fn{i}. \label{lemma:function_splitting}\\
        &  \:\:\graph \Argminlocg = \bigcup_{i = 1}^N \graph \fn{i}. \label{lemma:splitting-function-local-minima}
    \end{align}
\end{proposition}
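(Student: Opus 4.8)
The plan is to prove the decomposition by showing that the projection $\pi\colon \graph\critg \to \RR^n$, $(x,y)\mapsto x$, is a finite-sheeted covering map, and then to trivialize it using the simple connectedness of $\RR^n$. First I would observe that $S := \graph\critg = \{(x,y) : \nabla_y g(x,y) = 0\}$ is semi-algebraic, since $g$ is semi-algebraic and $C^k$. The parametric Morse property guarantees that at every point of $S$ the block $\nabla^2_{yy} g(x,y)$ is invertible, so the implicit function theorem applies at each $(x_0,y_0)\in S$ and yields a local $C^{k-1}$ solution $y=\psi(x)$ of $\nabla_y g(x,\cdot)=0$. This establishes two facts at once: $S$ is a $C^{k-1}$ submanifold of dimension $n$, and $\pi|_S$ is a local $C^{k-1}$ diffeomorphism. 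In particular each fiber $\crit g_x$ is discrete, and being closed and (by the local boundedness in \Cref{assumption:constraint-qualification}) contained in a fixed compact set locally uniformly in $x$, it is finite.

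Next I would upgrade local boundedness to properness of $\pi|_S$: the preimage of a compact $K\subseteq\RR^n$ is closed and contained in $K\times K'$ for a suitable compact $K'$, hence compact. A proper local homeomorphism onto a connected, locally compact Hausdorff base is a covering map with a finite, constant number of sheets $M$. Because $\RR^n$ is simply connected, this covering is trivial: $S$ splits into $M$ connected components, each mapped homeomorphically onto $\RR^n$ by $\pi$. Each component is therefore the graph of a global function $\fn{i}\colon\RR^n\to\RR^m$, which is $C^{k-1}$ as a local inverse of $\pi|_S$, and semi-algebraic because connected components of a semi-algebraic set are semi-algebraic and the inverse of a semi-algebraic homeomorphism is semi-algebraic. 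Disjointness of the components forces $\fn{i}(x)\neq\fn{j}(x)$ for $i\neq j$ at every $x$, which proves \eqref{lemma:function_splitting}.

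For the local-minima statement I would exploit that the Morse index is constant along each sheet. On a fixed component the eigenvalues of $x\mapsto \nabla^2_{yy} g(x,\fn{i}(x))$ vary continuously and, by the Morse property, never vanish; hence the number of negative eigenvalues is constant on the connected component. Thus each $\fn{i}$ carries a well-defined index, and $\fn{i}(x)$ is a local minimizer of $g_x$ for all $x$ precisely when that index is $0$, i.e. when the Hessian is positive definite. Relabelling so that the $N$ index-zero sheets come first, these are exactly the branches of $\Argminlocg$, giving \eqref{lemma:splitting-function-local-minima}.

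The main obstacle is the passage from the purely local IFT branches to globally defined functions on all of $\RR^n$: this is where the two clauses of \Cref{assumption:constraint-qualification} must be combined correctly. The Morse condition makes $\pi|_S$ a local diffeomorphism, while local boundedness of $\critg$ is exactly what yields properness and prevents branches from escaping to infinity or colliding; only with both in hand does the covering-space argument (using $\pi_1(\RR^n)=0$) deliver single-valued global sections rather than merely local ones. One must also be careful that this trivialization is carried out in the semi-algebraic category, which I handle by invoking that connected components of semi-algebraic sets are semi-algebraic rather than by a bare topological monodromy argument; this is precisely the content that should be isolated in \Cref{lemma:extension-to-Rn}.
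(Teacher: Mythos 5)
Your proof is correct, but it takes a genuinely different route from the paper's. Where you invoke covering-space theory --- properness of $\pi|_{\graph \critg}$ (from local boundedness plus closedness of the zero set of $\nabla_y g$), the fact that a proper local homeomorphism onto a locally compact Hausdorff base is a finite covering, and triviality of coverings of the simply connected $\RR^n$ --- the paper proves the global single-valuedness by hand: \Cref{lemma:extension-to-Rn} runs a maximal-radius continuation argument (define the supremum $R$ of radii over which a local implicit branch extends, and use local boundedness plus compactness of the sphere $\partial B(\bar x,R)$ to push past any finite $R$), with \Cref{lemma:twoBalls} supplying the open-closed uniqueness argument on intersections of balls. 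The two arguments rest on the same topology (your monodromy triviality is exactly what the paper's unique continuation establishes directly), but your version is more conceptual and gets the finiteness and constancy of $M$ for free from properness and discreteness of fibers, without appealing to the uniform bound on the number of connected components of semi-algebraic fibers that the paper takes from \cite[Properties 4.4]{o-minimal-structures}; semi-algebraicity is then needed only to conclude that each branch $\fn{i}$ is semi-algebraic, as in the paper. The paper's more elementary continuation lemma has the side benefit of being reusable elsewhere (it is cited again in \Cref{ex:morseParamNongeneric} and underlies the uniqueness claims of \Cref{lemma:universal-contraction}), which your packaged topological argument does not provide as directly. Your treatment of the index argument for \eqref{lemma:splitting-function-local-minima} coincides with the paper's. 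The only inaccuracy is your closing guess that \Cref{lemma:extension-to-Rn} isolates the semi-algebraic trivialization; it actually isolates the global unique continuation, but this does not affect the validity of your proof.
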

\begin{proof}
\textbf{Proof of \eqref{lemma:function_splitting}.}
The graph of $\critg$ is a semi-algebraic set (since $g$ is semi-algebraic) such that for each $x \in \RR^n$, the fibers $\crit \,g_x$, if non-empty, contain only isolated points thanks to the implicit function theorem. Hence, the connected components of $\crit g_x$ are singletons. By \cite[Properties 4.4]{o-minimal-structures}, the number of connected components of $\crit \, g_x$ is uniformly bounded above by an integer. 
Moreover, consider the following claim:
\begin{lemma}
    \label{lemma:extension-to-Rn}
    Suppose that \Cref{assumption:constraint-qualification} holds such that $g$ is $C^k$ ($k>1$), and let $(\bar{x},\bar{y})$ be a solution to the equation $\nabla_y g(\bar{x},\bar{y}) = 0$. There is a unique globally defined $C^{k-1}$ function $y \colon \RR^n \to \RR^m$ such that for any $x \in \RR^n$, $y(x) \in \RR^m$ is the unique solution to the equation $\nabla_y g(x,y) = 0$ locally, and $y(\bar{x}) = \bar{y}$.
\end{lemma}
    \Cref{lemma:extension-to-Rn} is proved at the end of this proof.
    Denote by $M(x)$ the size of $\crit \,g_x$, \Cref{lemma:extension-to-Rn} allows to conclude that this is a (finite) constant, i.e., $M(x) = M, \forall x \in \RR^n$. Indeed, each solution at a given $\bar{x}$ might be extended to globally defined solution, and if any two of such functions are equal at a given $x$, they are equal on the whole space.

    To finish the proof of \eqref{lemma:function_splitting}, it remains to show that each function $\fni, i =1, \ldots, M$ is semi-algebraic. This is because for each $i$, $\graph \fni$ is a connected component of the semi-algebraic set $\graph \critg$ (see \cite[Property 4.3]{o-minimal-structures}).

	\textbf{Proof of \eqref{lemma:splitting-function-local-minima}.} It is sufficient to prove that for each $i \in \{1, \ldots, M\}$, either $\fni(\firstvar)$ is a local minimum of $g_x$ for all $\firstvar \in \RR^n$ (positive definite Hessian), or $\fni(\firstvar)$ is a strict saddle point of $g_x$ for all $\firstvar$ (indefinite Hessian). Note that these are only two possibilities due to the parametric Morse assumption. 
 By the continuity of the eigenvalues and \ref{assumption:constraint-qualification}, we can conclude that $\nabla^2_{yy} g(\firstvar, \fn{i}(\firstvar))$ is either positive definite for all $\firstvar$ or indefinite for all $\firstvar$. In fact, the index, i.e., the number of positive eigenvalues, of $\nabla^2_{yy} g(\firstvar, \fn{i}(\firstvar))$ is constant for all $\firstvar$, as a change would result in a zero eigenvalue which contradicts the parametric Morse assumption. The result follows after permuting $\{1, \ldots, M\}$ so that local minima functions take the first $N$ positions.
\end{proof}
We consider an intermediate claim before proving \Cref{lemma:extension-to-Rn}.
    \begin{lemma}
        \label{lemma:twoBalls}
        Suppose that \Cref{assumption:constraint-qualification} holds.
        Let $B_1,B_2 \subset \RR^n$ be two open balls and $y_1 \colon B_1 \to \RR^m$ and $y_2 \colon B_2 \to \RR^m$ be continuous solutions to $\nabla_y g(x,y) = 0$. If $y_1(x) = y_2(x)$ for some $x \in B_1 \cap B_2$, then $y_1(x) = y_2(x)$ for all $x \in B_1 \cap B_2$.
    \end{lemma}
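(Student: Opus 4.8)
The statement to prove is Lemma 2.13 (\texttt{lemma:twoBalls}): two continuous local solution branches of $\nabla_y g(x,y) = 0$ that agree at one point of the overlap $B_1 \cap B_2$ must agree on the whole overlap. This is a standard "identity principle" for implicit-function branches, and the natural tool is a connectedness (clopen set) argument, since $B_1 \cap B_2$, being an intersection of two convex sets, is connected.

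Let me think about the structure.

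Define $S = \{x \in B_1 \cap B_2 : y_1(x) = y_2(x)\}$. I want to show $S$ is nonempty, open, and closed in $B_1 \cap B_2$. Nonempty is given by hypothesis. Closed is immediate from continuity of $y_1, y_2$: $S$ is the preimage of $\{0\}$ under the continuous map $x \mapsto y_1(x) - y_2(x)$.

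The crux is openness. This is where the Morse (nondegeneracy) assumption enters.

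**The key openness step.**

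Suppose $x_0 \in S$, so $y_1(x_0) = y_2(x_0) =: y_0$ with $\nabla_y g(x_0, y_0) = 0$. Under Assumption (Morse QC), $g_{x_0}$ is Morse, so $\nabla^2_{yy} g(x_0, y_0)$ is invertible. The implicit function theorem applied to $F(x,y) := \nabla_y g(x,y)$ at $(x_0, y_0)$ gives a neighborhood $U \times V$ and a unique $C^{k-1}$ function $\psi : U \to V$ with $\psi(x_0) = y_0$ and $F(x, \psi(x)) = 0$, and this is the *only* solution with $y$-value in $V$.

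Now here is the issue, and it's the real obstacle: I need $y_1$ and $y_2$ to both land in $V$ near $x_0$. Since $y_1, y_2$ are continuous with $y_1(x_0) = y_2(x_0) = y_0$, by continuity there is a neighborhood $U' \subseteq U$ of $x_0$ on which both $y_1(x), y_2(x) \in V$. For such $x$, both are solutions of $F(x, \cdot) = 0$ lying in $V$, so by the uniqueness clause of the implicit function theorem, $y_1(x) = \psi(x) = y_2(x)$. Hence $U' \subseteq S$, proving $S$ is open.

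**Writing it up and the main obstacle.**

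By connectedness of $B_1 \cap B_2$ (an intersection of two convex open sets is convex, hence connected) and since $S$ is nonempty, open, and closed, we get $S = B_1 \cap B_2$, which is exactly the claim.

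The main obstacle is not connectedness or closedness — those are routine — but correctly invoking the *local uniqueness* from the implicit function theorem. The subtlety is that IFT only guarantees uniqueness of the solution *within the small box $V$*; one must first use continuity to confine both branches $y_1, y_2$ to that box before uniqueness can be applied. Getting this order of quantifiers right (shrink the neighborhood so both branches enter $V$, *then* apply uniqueness) is the one place where a careless argument could go wrong. I will write the proof so that the continuity-confinement step precedes the uniqueness conclusion, and I will note explicitly that invertibility of $\nabla^2_{yy} g(x_0, y_0)$ — i.e., the parametric Morse property — is what licenses the implicit function theorem at every point of $S$.
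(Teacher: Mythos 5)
Your proof is correct and follows essentially the same route as the paper's: define the agreement set, show it is closed by continuity and open by the implicit function theorem (licensed by the Morse nondegeneracy of $\nabla^2_{yy} g$), and conclude by connectedness of the convex set $B_1 \cap B_2$. Your write-up is in fact more careful than the paper's on the openness step, where the paper simply cites the implicit function theorem without spelling out the confinement-then-uniqueness argument.
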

\begin{proof}[Proof of \Cref{lemma:twoBalls}]
    Set $\cE:= \{x \in B_1 \cap B_2 \mid y_1(x) = y_2(x)\}$. This set is closed (in the induced topology on $B_1 \cap B_2$) because it is the pre-image of $0$ by the continuous function $y_1 - y_2$. It is also open, thanks to the implicit function theorem. Since $B_1 \cap B_2$ is convex, hence path connected, we have either $\cE = \emptyset$ or $\cE = B_1 \cap B_2$. The conclusion follows because $\cE \neq \emptyset$ by assumption.
\end{proof}

\begin{proof}[Proof of \Cref{lemma:extension-to-Rn}]
    Define $R$ as: 
    \begin{equation}
        \label{eq:existence-of-function}
        R = \sup_{r > 0} \left\{r \mid\, \exists y_r \colon B(\bar{x},r) \to \RR^m,\, C^{k-1} \text{ s.t. }\forall x \in B(\bar{x},r),  \nabla_y g(x,y_r(x)) = 0,\, y_r(\bar{x}) = \bar{y}\right\} .
    \end{equation}

    We first argue that there exists a unique $C^{k-1}$ function $y: B(\bar{x},R) \to \RR^m$ such that $\nabla_y g(x, y(x)) = 0$ and $y(\bar{x}) = \bar{y}$. Indeed, by definition of $R$, for any $0 < a \leq b < R$, there exists two functions $y_{a}: B(\bar{x},a) \to \RR^m$ and $y_b: B(\bar{x},b) \to \RR^m$ satisfying the conditions of \Cref{eq:existence-of-function}. \Cref{lemma:twoBalls} ensures that $y_a(x) = y_b(x), \forall x \in B(\bar{x},a)$.  For any $x \in B(\bar{x},R)$, we set $y(x)=y_r(x)$ for any $r>0$ such that $\dist(\bar{x},x) < r < R$. This is well defined since, by the argument above the chosen value does not depend on $r$. In addition, $y$ is trivially $C^{k-1}$ (since $y_r$ is $C^{k-1}$ by the implicit function theorem), and unique by \Cref{lemma:twoBalls}.
    
    For the sake of contradiction, suppose that $R$ is finite. Using the boundedness assumption, for any $\tilde{x} \in \partial B(\bar{x},R)$ -- the boundary of $B(\bar{x},R)$, there is a solution $\tilde{y}$ which is an accumulation point of $y(x)$ as $x \to \tilde{x}$. By the implicit function theorem, there is an open ball neighborhood of $\tilde{x}$, $V_{\tilde{x}}$ and a $C^{k-1}$ function $f: V_{\tilde{x}} \to \RR^m$ such that $\TL{\nabla_y} g(x,f(x)) = 0, \forall x \in V_{\tilde{x}}$, and $f(\tilde{x}) = \tilde{y}$. Since the boundary is compact, it can be covered by finitely many such open balls, says $B_i$, $i=1,\ldots, K$ with centers on the sphere. In each open ball $B_i$, let $x_i$ be its center and $f_i: B_i \to \RR^m$ be the $C^{k-1}$ function satisfying $f_i(x_i) = y_i$. Denote $B(\bar{x},R)$ by $B_0$ and $f_0:= y$, we extend the function $y$ to the function $\tilde{y}$ of a larger domain $\cB: = \bigcup_{i=0}^K B_i$ as follows:
   \begin{equation}
        \label{eq:defExtension}
       \tilde{y}(x) = f_i(x) \text{ if } x \in B_i.
   \end{equation}
   A contradiction results from the following two claims:
   \begin{enumerate}
       \item $\tilde{y}(x)$ in \eqref{eq:defExtension} is well defined for all $x \in \cB$, and the resulting function is $C^{k-1}$.
       \item There exists $\delta > 0$ such that $B(\bar{x}, R + \delta) \subseteq \cB$.
   \end{enumerate}
   In particular, the second claim contradicts the fact that $R<+\infty$ is the maximum, concluding the proof.
   \begin{enumerate}[leftmargin=*]
       \item The well-definedness of $\tilde{y}$: we need to show that the value in \eqref{eq:defExtension} does not depend on $i$. It is sufficient to prove that for any $0 \leq i < j \leq k$, $f_i(x) = f_j(x)$, for all $x \in B_i \cap B_j$. Fix any $i<j$ such that $B_i \cap B_j \neq \emptyset$, by \Cref{lemma:twoBalls}, it suffices to show that the set $\{x \in B_i \cap B_j \mid f_i(x) = f_j(x)\}$ is non-empty. Indeed if $i = 0$, the implicit function theorem ensures local uniqueness of solutions to $\nabla_y g(x,y) = 0$ around $(x_j,y_j)$, and $y_j$ is a point of accumulation of $f_0$ on $B_0$, so that we have $f_i(x) = f_j(x)$ for some $x \in B_0 \cap B_j$ close to $x_j$. If both $i, j > 0$, since $x_i, x_j$ belong to $\partial B_0$, there exists $x \in B_i \cap B_j \cap B_0$ and, by the argument for $i = 0$, we have $f_i(x) = f_0(x) = f_j(x)$.
       \item The existence of $\delta$: we claim that $\delta := \dist(\overline{B(\bar{x},R)}, \cB^C) > 0$. Indeed, this is the distance between a compact ball i.e., $\overline{B(\bar{x},R)}$) and a closed set (the complement of the open set $\cB$), so the infimum is attained. If $\delta = 0$, there is a point on the sphere not belonging to any $B_i$, which contradicts the fact that we have a covering. Hence $B(\bar{x}, R + \delta) \subset \cB$.
   \end{enumerate}
\end{proof}

\begin{remark}
    \label{remark:uniform-boundedness}
    If in \Cref{prop:crit-local-structures}, one replaces the condition $g$ is semi-algebraic by the uniform boundedness of the cardinality of $\crit \,g_x$, the conclusions of \Cref{prop:crit-local-structures} still hold, but $\fni$ is not necessarily semi-algebraic. 
\end{remark}

\Cref{fig:crit-local-structures} illustrates the claims in \Cref{prop:crit-local-structures}. We remark that the same claim for \emph{global} minimizers of $g_x$ is not true in general. Indeed, under \Cref{assumption:constraint-qualification}, $\argmin \,g_x = \argmin_{i = 1, \ldots, N} \, g(x,\fni(x))$, which is generally only \emph{piecewise} smooth and can be \emph{set-valued} for arguments $x$ with multiple solutions.

\begin{remark}[Necessity of local-boundedness assumption]
    Consider the function $g = \tau \circ h$ where $\tau$ is the real-valued function $\tau:\firstvar \mapsto \frac{\firstvar^2}{2}$ and $h: \RR \times \RR \to \RR$ defined by
    $h: (\firstvar,\secondvar) \mapsto \firstvar + {\secondvar} /{\sqrt{\secondvar^2 + 1}}$.
    Note that $h(\firstvar, \cdot)$ is a diffeomorphism from $\RR$ to the open interval $(\firstvar - 1, \firstvar + 1)$, for all $x\in\RR$. Hence, $g$ is a parametric Morse function (cf. \Cref{example:morse-parametric}b). Its sets of critical points (and also local/global minima) of a fixed $x$ (given below) is not the graph of a $C^1$ function due to escape at infinity.
    \begin{equation*}
        \crit\, g_x = \argminloc\, g_x = \argmin\, g_x =
        \begin{cases}
            \left\{-\frac{\firstvar}{\sqrt{1 - \firstvar^2}}\right\} & \text{if } \firstvar \in (-1,1),\\
            \emptyset & \text{otherwise}
        \end{cases}.
    \end{equation*}
\end{remark}

A consequence of the above is that, under   \Cref{assumption:constraint-qualification}, the relaxed bilevel optimization problems \eqref{eq:crit-bilevel-optim} and \eqref{eq:loc-bilevel-optim} can be rewritten as mixed continuous-discrete variable programs in the following ways (\Cref{tab:equivalence-two-relaxations}).
\begin{table}[h]
    \centering
    \begin{tabular}{>{\centering\arraybackslash}p{0.18\textwidth}>{\centering\arraybackslash}m{0.35\textwidth}>{\centering\arraybackslash}m{0.4\textwidth}}
         \toprule
         & Critical points relaxation & Local minima relaxation \\
         \midrule
         Constrained optimization & \begin{minipage}{0.38\textwidth}
            \begin{equation*}
            \begin{aligned}
                &\underset{\firstvar \in \RR^n,\secondvar \in \RR^m}{\min} \, f(\firstvar,\secondvar)\\
				&\quad \text{s.t.} \,\, \secondvar \in \{ \fn{i}(x), 0 \leq i \leq \textcolor{black}{M} \}
            \end{aligned}
            \end{equation*}
            \vskip.05in
        \end{minipage} & 
        \begin{minipage}{0.4\textwidth}
            \begin{equation*}
            \begin{aligned}
                &\underset{\firstvar \in \RR^n,\secondvar \in \RR^m}{\min} \, f(\firstvar,\secondvar)\\
				&\quad \text{s.t.} \,\, \secondvar \in \{\fn{i}(x), 0 \leq i \leq \textcolor{black}{N} \}
            \end{aligned}
            \end{equation*}
        \end{minipage} \\
        \hline
        Mixed-integer optimization & \begin{minipage}{0.38\textwidth}
            \vskip.05in
            \begin{equation*}
            \begin{aligned}
                \underset{\firstvar \in \RR^n, 1 \leq i \leq \textcolor{black}{M}}{\min} & \, f(\firstvar,\fn{i}(x))
            \end{aligned}
            \end{equation*}
        \end{minipage} & 
        \begin{minipage}{0.4\textwidth}
            \vskip.05in
            \begin{equation*}
            \begin{aligned}
                \underset{\firstvar \in \RR^n, 1 \leq i \leq \textcolor{black}{N}}{\min} & \, f(\firstvar,\fn{i}(x))
            \end{aligned}
            \end{equation*}
        \end{minipage} \\
        \bottomrule
    \end{tabular}
    \caption{Equivalent reformulation of \eqref{eq:crit-bilevel-optim} and \eqref{eq:loc-bilevel-optim} under \Cref{assumption:constraint-qualification}. The constants $M,N$ are defined as in \Cref{prop:crit-local-structures}.}
    \label{tab:equivalence-two-relaxations}
\end{table}

\subsection{Lower level gradient descent under parametric Morse QC}
\label{sec:lower-level-gd}
\paragraph{Uniform linear convergence} The following lemma states that if $z^\star \in \argminloc \,g_{x^\star}$ (cf. \cref{lemma:splitting-function-local-minima}), then the algorithm $\algo$ behaves uniformly in terms of convergence in a neighborhood of $(x^\star,z^\star)$. These are classical results which we recall for completeness.

\begin{lemma}[Uniform linear convergence near a local minimum]
    \label{lemma:local-convexity}
	Under \Cref{assumption:lowerLevel}, if $g$ is parametric Morse, for any $z^\star \in \argminloc \,g_{x^\star}$,
	denote by $y^\star$ the implicit function associated to the equation $\grad_\secondvar g(\firstvar, \secondvar) = 0$, defined in a neighborhood of $\firstvar^\star$ and set $\varphi \colon x \mapsto f(x,y^\star(x))$. Then, there exists a neighborhood $U$ of $(\firstvar^\star,\thirdvar^\star)$  and constants $0 \leq \rho < 1$, $C > 0$, such that for all $(\firstvar, \thirdvar) \in U$, we have for all $k \in \NN$ and all $(x,z) \in U$:
    \begin{align*}
		\|\algo^k(\firstvar,\thirdvar) - y^\star(\firstvar)\| &\leq  \rho^k\|z - y^\star(x)\|,\\
		\| \nabla_z \varphi^k(x,z)\| &\leq C \rho^k, \\
		\|\nabla_x \varphi^k(x,z) - \nabla \varphi(x)\| &\leq C \rho^k,
    \end{align*}
	where $\algo$ is defined in \Cref{algo:inner_gradient_descent} and $\varphi^k$ is the function of  \eqref{eq:algorithmic-bilevel}.
\end{lemma}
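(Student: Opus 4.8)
The plan is to reduce everything to local uniform strong convexity of the lower level near the minimizer and then to standard linear-rate contraction estimates, made uniform over the neighborhood by compactness. Since $g$ is parametric Morse and $z^\star \in \argminloc\, g_{x^\star}$, the Hessian $\nabla^2_{yy} g(x^\star, z^\star)$ is symmetric positive definite, and by continuity of $\nabla^2_{yy} g$ (recall $g$ is $C^3$ under \Cref{assumption:lowerLevel}) there is a neighborhood $V$ of $(x^\star, z^\star)$ and constants $0 < \mu \leq L_g$ such that $\mu \bI \preceq \nabla^2_{yy} g(x,y) \preceq L_g \bI$ on $V$, where $L_g$ is the global gradient-Lipschitz constant. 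On this slice $g_x$ is locally $\mu$-strongly convex and $L_g$-smooth, and $y^\star$ is the $C^{2}$ branch furnished by \Cref{lemma:extension-to-Rn}. I set $\rho_0 := \max(|1-\alpha_g\mu|,\,|1-\alpha_g L_g|)$, which lies in $[0,1)$because $0 < \alpha_g L_g < 1$, and fix once and for all some $\rho \in (\rho_0,1)$; this slack will be needed to absorb a polynomial factor appearing in the third estimate.

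For the first inequality I would write one gradient step as $\Phi_x(y) = y - \alpha_g \nabla_y g(x,y)$ and use $\nabla_y g(x, y^\star(x)) = 0$ together with the fundamental theorem of calculus to obtain
\[
\Phi_x(y) - y^\star(x) = \Bigl( \bI - \alpha_g \textstyle\int_0^1 \nabla^2_{yy} g\bigl(x,\, y^\star(x) + t(y - y^\star(x))\bigr)\, dt\Bigr)\bigl(y - y^\star(x)\bigr).
\]
Whenever the segment from $y^\star(x)$ to $y$ lies in the $V$-slice, the averaged Hessian has spectrum in $[\mu,L_g]$, so the displayed matrix has operator norm at most $\rho_0$. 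Shrinking $U \subseteq V$ so that the ball around $y^\star(x)$ of radius $\sup_{(x,z)\in U}\|z-y^\star(x)\|$ stays inside the slice, contraction keeps every iterate $\cA^j(x,z)$ in that ball, and an induction on $j$ yields $\|\cA^k(x,z)-y^\star(x)\|\le\rho_0^k\|z-y^\star(x)\|\le\rho^k\|z-y^\star(x)\|$, i.e.\ the first estimate.

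For the remaining two I differentiate $\varphi^k(x,z)=f(x,\cA^k(x,z))$ through the chain rule. With $y_j:=\cA^j(x,z)$, $M_j:=\bI-\alpha_g\nabla^2_{yy}g(x,y_j)$ (so $\|M_j\|\le\rho_0$ by the previous step) and $N_j:=\nabla^2_{yx}g(x,y_j)$, one has $\partial_z\cA^k = M_{k-1}\cdots M_0$, hence $\|\partial_z\cA^k\|\le\rho_0^k$; since $\nabla_z\varphi^k=(\partial_z\cA^k)^\top\nabla_y f(x,y_k)$ and $\|\nabla_y f\|$ is bounded on the compact closure of $U$, the second estimate follows. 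The Jacobians $J_j:=\partial_x\cA^j(x,z)$ obey the affine recursion $J_{j+1}=M_jJ_j-\alpha_g N_j$, whose fixed point is $J^\star:=\partial_x y^\star(x)=-[\nabla^2_{yy}g(x,y^\star(x))]^{-1}\nabla^2_{yx}g(x,y^\star(x))$, obtained by differentiating $\nabla_y g(x,y^\star(x))=0$. The main obstacle is controlling $E_j:=J_j-J^\star$: subtracting the fixed-point identity gives $E_{j+1}=M_jE_j+(M_j-\bar M)J^\star-\alpha_g(N_j-\bar N)$ with $\bar M,\bar N$ evaluated at $y^\star(x)$, and using that $\nabla^2 g$ is locally Lipschitz (from $g\in C^3$) together with the first estimate makes the last two terms $O(\rho_0^j)$, so $\|E_{j+1}\|\le\rho_0\|E_j\|+D\rho_0^{j}$ for a uniform $D$. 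Unrolling this inhomogeneous recursion produces $\|E_k\|\le\rho_0^k\|E_0\|+D\,k\,\rho_0^{k-1}$, and here is exactly where the choice $\rho\in(\rho_0,1)$ pays off, since $k\rho_0^{k-1}\le C\rho^k$; thus $\|J_k-J^\star\|\le C\rho^k$. Finally, expanding
\[
\nabla_x\varphi^k(x,z)-\nabla\varphi(x)=\bigl[\nabla_x f(x,y_k)-\nabla_x f(x,y^\star(x))\bigr]+\bigl[J_k^\top\nabla_y f(x,y_k)-(J^\star)^\top\nabla_y f(x,y^\star(x))\bigr]
\]
and bounding each bracket via the first estimate, the Jacobian bound, and Lipschitz continuity of $\nabla f$ yields the third estimate. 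All constants are uniform over $(x,z)\in U$ because they arise from continuous quantities bounded on the compact closure of $U$.
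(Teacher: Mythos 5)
Your proposal is correct and follows essentially the same route as the paper's proof: local uniform positive definiteness of $\nabla^2_{yy} g$ near $(x^\star,z^\star)$ giving a uniform contraction factor for the inner iteration, the product formula for $\partial_z \algo^k$ for the second bound, and the inhomogeneous error recursion for $\partial_x \algo^k - \jac y^\star$ (identical to the paper's $\delta_{k+1}\le\rho\delta_k+K\rho^k\|z-y^\star(x)\|$) whose $k\rho_0^{k-1}$ term is absorbed by slightly enlarging the rate. The only cosmetic differences are that you make the contraction step explicit via the fundamental theorem of calculus where the paper invokes it as standard, and that you fix the enlarged $\rho$ up front rather than at the end.
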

\begin{proof}
	Since we have a Morse local minimizer, $\partialss^2 g(\firstvar^\star,\thirdvar^\star)$ is positive definite. Let $U = B_1 \times B_2$ be the product of two Euclidean balls centered at $\firstvar^\star$ and $\thirdvar^\star$ such that: 
	\begin{itemize}
		\item $\partialss^2 g$ remains positive definite on $U$.
		\item For any $x \in B_1$, we have $y^\star(x) \in B_2$. In other words, for all $(x,y) \in U$, $y^\star(x) \in \TL{B_2}$.
	\end{itemize}
	The first item can be satisfied using the continuity of $\partialss^2 g$ and the second one can be satisfied by reducing $B_1$ if necessary, using the local continuity of $y^\star$. We have the following properties, for all $x \in B_1$:
	\begin{itemize}
		\item The function $g_x$ is strongly convex over $B_2$.
		\item $\argmin_{B_2} g_x$ is achieved.
	\end{itemize}
    It is well known that the gradient recursion is a strict contraction with a factor $\rho < 1$ for strongly convex objectives and that its unique fixed point is the global minimizer. Without loss of generality, we may assume that the same $\rho<1$ is a uniform bound on the quantity $\|\bI - \alpha_g \partialss^2 g(\firstvar,\secondvar)\|_\mathrm{op}$ on $U$. All algorithmic iterates remain in the chosen neighborhood because of the algorithmic contraction property. This concludes the proof of the first point.

	Regarding derivatives, for a given $(x,z) \in U$, we introduce the following notations for all $k \in \NN$, and $(x,z) \in U$,
	\begin{align*}
		D_k(x,z) &= \frac{\partial}{\partial x} \algo^k(x,z)
		&C_k(x,z) = \frac{\partial}{\partial z} \algo^k(x,z)\qquad\qquad\,\\
		A_k(x,z) &= \bI - \alpha_g \partialss^2 g(x, \algo^k(x,z))
		&B_k(x,z) = - \alpha_g \partialsf^2 g(x, \algo^k(x,z))\\
		A(x) &= \bI - \alpha_g \partialss^2 g(x, y^\star(x))
		&B(x) = - \alpha_g \partialsf^2 g(x, y^\star(x))\qquad
	\end{align*}
	We have the following identities from the chain rule of differential calculus and implicit differentiation:
	\begin{align*}
		C_{k+1}(x,z) &= A_k(x,z) C_k(x,z), \\
		D_{k+1}(x,z) &= A_k(x,z) D_k(x,z) + B_k(x,z), \\
		\jac y^\star(x) &= A(x)  \jac y^\star(x) + B(x).
	\end{align*}
	We have for all $k$, uniformly in $x,z$, $\|A_k(x,z)\|_\mathrm{op} \leq \rho$. We conclude that $\|C_k(x,z)\| \leq \|C_0(x,z)\| \rho^k = m \rho^k$ which proves the second point. We also have the following:
	\begin{align}
		&D_{k+1}(x,z) - \jac y^\star(x) \nonumber\\
		=\; & A_k(x,z) D_k(x,z) + B_k(x,z) - A(x)  \jac y^\star(x) - B(x) \nonumber\\
		=\; & A_k(x,z) (D_k(x,z) -\jac y^\star(x) ) + (A_k(x,z)- A(x))  \jac y^\star(x)  + B_k(x,z)- B(x) \label{eq:mainRecursionConvergenceDerivatives}
	\end{align}
	Since $g$ is $C^3$, we have that $\partialss^2 g$ and $\partialsf^2 g$ are both Lipschitz on $U$, say with constant $\TL{M/ \alpha_g}$. By continuity, we may also consider that $H$ a uniform bound on $\|\jac y^\star(x)\|$ and set $K= M(H+1)$. Combining with the first convergence estimate, we have for all $k$:
	\begin{align*}
		\|A_k(x,z)- A(x) \| &\leq M \rho^k \|z - y^\star(x)\|, \\
		\|B_k(x,z)- B(x) \| &\leq M \rho^k \|z - y^\star(x)\|.
	\end{align*}
	Getting back to \eqref{eq:mainRecursionConvergenceDerivatives}, setting $\delta_k = \|D_k(x,z) -\jac y^\star(x)\|$ for all $k$, we have the recursion
	\begin{equation}
        \label{eq:estimation-jacobian}
	    \begin{aligned}
		      \delta_{k+1} \leq \rho \delta_k + M \rho^k\|z -y^\star(x)\| \|\jac y^\star(x)\| + M  \rho^k\|z - y^\star(x)\| \leq \rho \delta_k + K \rho^k\|z - y^\star(x)\| 
	    \end{aligned}
	\end{equation}
	with $\delta_0 \leq H \leq K$. We verify using induction that for all $k \in \NN$, $\delta_k \leq K \rho^k + k K \|z - y^\star(x)\|  \rho^{k-1}$. This is true for $k = 0$, and by induction
    \begin{equation*}
        \begin{aligned}
		      \delta_{k+1} &\leq \rho \delta_k + K \rho^k\|z - y^\star(x)\| \\
		      &\leq  K \rho^{k+1} + k K \|z - y^\star(x)\|  \rho^{k} + K \rho^k\|z - y^\star(x)\| \\
		      &=  K \rho^{k+1}  + (k+1) K \|z - y^\star(x)\|  \rho^{k},
	   \end{aligned}
    \end{equation*}
	the induction holds. Increasing $\rho$ slightly and choosing an appropriate constant $C>0$, these estimates may be upper bounded by $C\rho^k$.

	We have, for all $k \in \NN$, 
    \begin{equation}
        \label{eq:estimation-gradient}
        \begin{aligned}
		      \nabla_z \varphi^k(x,z) &= \TL{C_k}(x,z)^\top \nabla_y f(x, \algo^k(x,z))\\
		      \nabla_x \varphi^k(x,z) &= \nabla_x f(x,\algo^k(x,z)) + D_k(x,z)^\top \nabla_y f(x, \algo^k(x,z)) \\
		      \nabla \varphi(x) &= \nabla_x f(x,y^\star(x)) + \jac y^\star(x)^\top \nabla_y f(x, y^\star(x)) 
	    \end{aligned}
    \end{equation}
	and the third estimate follows, eventuallty increasing $C$, using the first estimate, since the gradient of $f$ is Lipschitz continuous and bounded on $U$ since $f$ is $C^2$.
\end{proof}

\paragraph{Convergence to local minima with high probability over initialization}

Another interesting property of parametric Morse functions is that with high probability over initialization $(\firstvar,\thirdvar)$, \Cref{algo:inner_gradient_descent} eventually finds a local minimum of $g_x$ for large $k$. This is of course in the line of classical results for Morse functions \cite{pemantle1990nonconvergence,goudou2009gradient} and based on the stable-unstable manifold theorems.

\begin{proposition}[Uniform convergence to local minima over compact initialization]
    \label{theorem:uniform-convergence}
    Let $g$ satisfy all the assumptions in \Cref{assumption:lowerLevel} -- except semi-algebraicity -- and assume in addition that $g$ is parametric Morse \footnote{Note that we do not assume local boundedness of $\crit g_x$ as in \Cref{assumption:constraint-qualification}.} (cf. \Cref{def:parametric_morse}).
    For any compact set $C \subseteq \RR^n \times \RR^m$, any $\delta, \Delta > 0$, there exists a set $A \subseteq \RR^n \times \RR^m$ (only dependant on $\Delta$) and a natural number $K \in \NN$ (dependant on $C, \delta$ and $\Delta$) such that:
    \begin{enumerate}
        \item[i)] The Lebesgue measure of $A$ is smaller than $\Delta$, i.e., $\mu(A) \leq \Delta$.
        \item[ii)] We have: 
        $$\forall (\firstvar,\thirdvar) \in C \setminus A, \forall k \geq K \quad \dist(\cA^k(\firstvar, \thirdvar), \argminloc g_x) \leq \delta.$$ 
        where $\dist(x, A):= \inf_{y \in A} \|x-y\|_2$.
    \end{enumerate}
\end{proposition}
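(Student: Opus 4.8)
The plan is to combine three ingredients: trajectory-wise convergence of gradient descent on each Morse slice $g_x$, a fiber-wise ``avoidance of saddles'' argument producing a global Lebesgue-null set of bad initializations, and a compactness argument that upgrades pointwise convergence to uniform convergence once a small-measure neighborhood of that null set is removed.

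First I would show that \emph{every} trajectory converges to a critical point. Under \Cref{assumption:lowerLevel} the descent lemma gives $\sum_k \|\nabla_y g_x(\cA^k(x,z))\|^2 < \infty$, so $\|\cA^{k+1}(x,z)-\cA^k(x,z)\|\to 0$, while coercivity confines the iterates to a fixed sublevel set. The limit set is therefore nonempty, compact, connected, and contained in $\crit g_x$; since $g$ is parametric Morse (\Cref{def:parametric_morse}) the critical points of $g_x$ are isolated, so a connected subset of them is a singleton and $\cA^k(x,z)\to y_\infty(x,z)\in\crit g_x$. As a pointwise limit of the continuous maps $\cA^k$, the map $y_\infty$ is measurable.

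Next I would prove that the bad set $B:=\{(x,z)\mid y_\infty(x,z)\text{ is a strict saddle of }g_x\}$ is Lebesgue-null. For fixed $x$ the gradient step $T_x(y)=y-\alpha_g\nabla_y g(x,y)$ is a $C^1$ local diffeomorphism (its Jacobian $\bI-\alpha_g\nabla^2_{yy}g$ has eigenvalues in $(0,2)$ because $\alpha_g L_g<1$) and has an expanding eigenvalue at every strict saddle, so by the (center-)stable manifold theorem, in the spirit of \cite{pemantle1990nonconvergence,goudou2009gradient}, the $z$ whose trajectory converges to a saddle lie in a countable union of submanifolds of dimension $<m$, whence $\mu_m(B_x)=0$. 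Writing $B=\{(x,z)\mid \lambda_{\min}(\nabla^2_{yy}g(x,y_\infty(x,z)))<0\}$ shows $B$ is measurable, so Fubini--Tonelli yields $\mu(B)=\int \mu_m(B_x)\,dx=0$. By outer regularity I then fix, for each $\Delta>0$, an open set $A=A(\Delta)\supseteq B$ with $\mu(A)\le\Delta$; since $B$ is fixed, $A$ depends only on $\Delta$, as required.

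Finally I would upgrade convergence to uniform convergence on the compact set $C\setminus A\subseteq B^c$. Each $(x,z)\in C\setminus A$ converges to $y_\infty(x,z)\in\argminloc g_x$, so by \Cref{lemma:local-convexity} the trajectory eventually enters a forward-invariant basin (the iterates remain in the neighborhood by the contraction property in that proof) on which $\cA^k$ contracts linearly to a local-minimum branch. The hard part will be bounding the entry time uniformly: if it were unbounded along some $(x_j,z_j)\in C\setminus A$, a convergent subsequence $(x_j,z_j)\to(x_*,z_*)\in C\setminus A$ would, by continuity of the finitely iterated map $\cA^{k_*}$ and openness of the basin reached by $(x_*,z_*)$, force all nearby points to enter by time $k_*$, contradicting unboundedness. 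Granting a uniform entry time $K_1$, the compact set $V:=\{(x,\cA^{K_1}(x,z))\mid (x,z)\in C\setminus A\}$ is covered by finitely many such basins; with $\rho<1$ the largest of their contraction rates and $C_0$ a uniform bound on the initial distances on $V$, one gets $\dist(\cA^k(x,z),\argminloc g_x)\le C_0\rho^{\,k-K_1}$ for all $(x,z)\in C\setminus A$ and $k\ge K_1$, so choosing $K$ with $C_0\rho^{\,K-K_1}\le\delta$ concludes; here $K$ depends on $C,\delta,\Delta$ while $A$ depends only on $\Delta$.
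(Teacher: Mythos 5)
Your proposal is correct and follows essentially the same route as the paper: trajectory-wise convergence to a critical point, a Lebesgue-null saddle-attracted set via the stable manifold theorem and Fubini--Tonelli, an open envelope $A$ of measure at most $\Delta$, and a compactness argument combining the uniform local contraction of \Cref{lemma:local-convexity} with a finite cover to get a uniform entry time $K$. The only cosmetic differences are that you establish convergence of each trajectory by the elementary ``connected limit set inside a discrete critical set'' argument rather than invoking the KL-based convergence theorem of \cite{attouch2013Convergence}, and you phrase the uniformity step as a sequential-compactness contradiction rather than directly extracting the finite subcover; both are equivalent to what the paper does.
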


\begin{proof}
    We use two following facts:
    \begin{enumerate}[leftmargin=*]
        \item Since $g_x$ is Morse ($g$ is parametric Morse), $g_x$ satisfies the KL inequality \cite[Section 4]{attouch2010proximal}. By assumptions, $g$ is $L_g$-smooth, coercive, and the learning rate $0 < \alpha_g < 1 / L_g$. Using \cite[Theorem 3.2]{attouch2013Convergence}, we can prove that for any $(\firstvar,\thirdvar) \in \RR^m \times \RR^n$, the sequence $\cA^k(\firstvar, \thirdvar)$ will converge to a critical point of $g_x$.
        \item Due to the result of \cite{pemantle1990nonconvergence,goudou2009gradient,lee2016gradient,panageas2017gradient}, for a fixed $\firstvar$, the set of $\thirdvar$ such that the sequence $\cA^k(\firstvar, \thirdvar)$ converges to a strict saddle point (i.e. a critical point $y$ of $g_x$ such that $\nabla_{yy}^2 g(x,y)$ has at least one negative eigenvalue) is of Lebesgue measure zero. Using the Fubini-Tonelli theorem, we have the set 
        $$\cB:= \{(\firstvar, \thirdvar) \mid \{\cA^k(\firstvar,\thirdvar)\}_{k \in \NN} \text{ converges to a saddle point of } g_x\},$$
        is of zero Lebesgue measure. 
        
    \end{enumerate}
    Consider $\bar{\cB}$ the complement of the set $\cB$. By combining the first two remarks, we can conclude that $\bar{\cB}$ contains $(\firstvar,\thirdvar)$ whose $\{\cA^k(\firstvar,\thirdvar)\}_{k \in \NN}$ converges to a local minimum $\thirdvar^\star$ of $g_x$.
    
    To finish the proof, for any $\Delta > 0$, we take an open set $A$ such that $\mu(A) \leq \Delta$ such that $\cB \subseteq A$. Note that this is possible because $\mu(\cB) = 0$. Since $A$ is open, $C \setminus A$ is compact.

    Consider a point $(\firstvar, \thirdvar) \in C \setminus A$. Let $\thirdvar^\star$ be the local minimizer of $g_x$ that $\cA^k(\firstvar,\thirdvar), k \in \NN$ converges to. By \Cref{lemma:local-convexity}, there exists an open neighborhood $\TL{U_1(\firstvar, \thirdvar^\star)}$ of $(\firstvar, \thirdvar^\star)$ such that $\forall (\firstvar', \thirdvar') \in U_1(x,z^\star), \forall k \in \NN$, $\|\cA^k({\firstvar'},\thirdvar') - \bar{\secondvar}(\firstvar')\| \leq \|\thirdvar' - \bar{y}(x')\| \rho^k$ for some $0 < \rho < 1$ and the function $\bar{\secondvar}$ is defined as in \Cref{lemma:local-convexity}. Note that $\bar{y}(x')$ is also the local minimizer of $g(x',\cdot)$ (using the same argument as in the proof of \Cref{lemma:splitting-function-local-minima}).
    
    Let $k(\firstvar, \thirdvar)$ be the smallest number such that: $(\firstvar, \cA^{k(\firstvar,\thirdvar)}(\firstvar,\thirdvar)) \in U_1(x,z^\star) \cap B((x,z^\star), \delta)$ (which exists due to the convergence assumption). Since $\cA^{k(\firstvar,\thirdvar)}$ is a continuous function in both variables $\firstvar$ and $\thirdvar$, there exists an open neighborhood $U_2(\firstvar, \thirdvar)$ such that its image under $\cA^{k(\firstvar, \thirdvar)}$ is included in $U_1(x, z^\star)$. By construction, we have in addition:  
    \begin{equation*}
        \|\bar{y}(x') - \cA^k(x',z')\| \leq \|\bar{y}(x') - \cA^{k(x,z)}(x',z')\| \leq \delta, \forall k \geq k(x,z), \forall (x',z') \in \TL{U_2(x,z)}.
    \end{equation*}
    Note that $\cU := \{U_2(\firstvar, \thirdvar) \mid (\firstvar, \thirdvar) \in C \setminus A\}$ is an open coverage of the compact set $C \setminus A$. Therefore, there exists a finite subset $D \subseteq C \setminus A$ of such that:
    \begin{equation*}
        C \setminus A = \bigcup_{(\firstvar, \thirdvar) \in D} U(\firstvar, \thirdvar).
    \end{equation*}
    This implies the result immediately because one can take $K = \max_{(\firstvar, \thirdvar) \in D} k(\firstvar, \thirdvar)$.
\end{proof}
\begin{remark}[On the necessity of compactness] Let us observe that the compactness of $C$ is necessary: a trivial counterexample is $g(x,y) = (x-y)^2$ and $C = \RR^{n + m}$. In this setting, there does not exist any $A, K$ as in \Cref{theorem:uniform-convergence} (and we leave the verification of this claim for the readers)
\end{remark}

\paragraph{Attractivity of the connected components of $\graph ( \Argminlocg )$} We shall also need a modified version of the attraction properties of $\graph ( \Argminlocg )$ on a compact set. 
\begin{lemma}[Attraction properties of the lower level constraints]
    \label{lemma:universal-contraction}
    Consider a compact set $\cS \subseteq \RR^n$. Under \Cref{assumption:constraint-qualification} and \Cref{assumption:lowerLevel}, there exist constants $r:=r(\cS) > 0, \rho:= \rho(\cS) \in (0,1), c:=c(\cS) > 0$ such that for all $(\firstvar, \thirdvar) \in \cS \times \RR^m$:
    \begin{enumerate}[leftmargin=*]
        \item If $\dist((\firstvar, \thirdvar), \TL{\graph} \Argminlocg) \leq r$, there is a \emph{unique} $C^2$ function $y$ with $\TL{\graph} y \subseteq \Argminlocg$  such that: 
        $$\|\cA^k(\firstvar, \thirdvar) - y(\firstvar)\| \leq \rho^k\|\thirdvar - y(\firstvar)\|.$$
        \item If $\dist((\firstvar, \cA^{l}(\firstvar,\thirdvar)), \TL{\graph} \Argminlocg) \leq r$, there is a \emph{unique} $C^2$ function $y$ with  $\graph y \subseteq \Argminlocg$ such that:
        \begin{equation*}
            \|\nabla_\firstvar \varphi^{k+l}(\firstvar,\thirdvar) - \nabla_\firstvar \varphi^\infty(\firstvar,\thirdvar)\|_2 \leq (\|\jacnospace_x \cA^l(x,z) - \jacnospace_x y(x)\| + k)c\rho^k
        \end{equation*}
        where $\varphi^k(\firstvar,\thirdvar) := f(\firstvar, \cA^k(\firstvar,\thirdvar))$ and $\varphi^\infty(\firstvar,\thirdvar) := f(\firstvar, y(\firstvar))$.
    \end{enumerate}
\end{lemma}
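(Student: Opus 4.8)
The plan is to reduce the whole statement to a \emph{uniform} version of \Cref{lemma:local-convexity} over the compact base $\cS$, using the global manifold decomposition of \Cref{prop:crit-local-structures}. First I would write $\graph \Argminlocg = \bigcup_{i=1}^N \graph \fn{i}$, where the $\fn{i}\colon \RR^n \to \RR^m$ are globally defined $C^2$ functions with pairwise distinct values (here $C^{k-1} = C^2$ since $g$ is $C^3$). Enlarging $\cS$ to a compact neighborhood $\cS'$ (say its closed unit neighborhood) and setting $K := \graph \Argminlocg \cap (\cS' \times \RR^m)$, the local boundedness in \Cref{assumption:constraint-qualification} makes $\bigcup_{x \in \cS'} \crit g_x$ bounded, while $\graph \Argminlocg = \bigcup_{i=1}^N \graph \fn{i}$ is closed as a finite union of graphs of continuous functions; hence $K$ is compact. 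Over $\cS'$ I record two uniform quantities used throughout: a bound $H := \sup_{x \in \cS', i} \|\jac \fn{i}(x)\| < \infty$ (finite by continuity and compactness) and a separation $\sigma := \min_{i \neq j}\min_{x \in \cS'} \|\fn{i}(x) - \fn{j}(x)\| > 0$ (positive because finitely many continuous functions have distinct values on a compact set).

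For \textbf{Part 1}, each $(x^\star, z^\star) \in K$ gives, via \Cref{lemma:local-convexity}, a neighborhood $U(x^\star,z^\star)$ and constants $\rho(x^\star,z^\star) < 1$, $C(x^\star,z^\star) > 0$ on which the three estimates hold, the local implicit branch there coinciding with the global $\fn{i}$ through the point (by \Cref{lemma:twoBalls} and \Cref{lemma:extension-to-Rn}); I also shrink each neighborhood so that $\|z - \fn{i}(x)\| < \sigma/2$ on it. Extracting a finite subcover $U_1,\dots,U_p$ of $K$, set $\rho := \max_j \rho_j < 1$ and $C := \max_j C_j$. Since $K$ is compact and contained in the open set $\bigcup_j U_j$, there is $r_0 > 0$ whose $r_0$-neighborhood of $K$ lies in $\bigcup_j U_j$; choosing $r := \min\{r_0, \tfrac12, \tfrac{\sigma}{2}\}$ guarantees that for $(x,z)$ with $x \in \cS$ and $\dist((x,z),\graph\Argminlocg) \le r$, the nearest graph point has base in $\cS'$ (so it lies in $K$), hence $(x,z) \in U_j$ for exactly one branch $y := \fn{i_j}$ (uniqueness by $r < \sigma/2$). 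The first estimate of \Cref{lemma:local-convexity} is then precisely the claimed contraction $\|\cA^k(x,z) - y(x)\| \le \rho^k \|z - y(x)\|$.

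For \textbf{Part 2} the key is the semigroup identity $\cA^{k+l}(x,z) = \cA^k(x, \cA^l(x,z))$ together with the chain rule applied to $w(x) := \cA^l(x,z)$. Writing $\varphi^{k+l}(x,z) = \varphi^k(x, w(x))$ and differentiating in $x$ yields
\begin{equation*}
\nabla_x \varphi^{k+l}(x,z) = \nabla_x \varphi^k(x,w) + \jacnospace_x \cA^l(x,z)^\top\, \nabla_z \varphi^k(x,w), \qquad w = \cA^l(x,z).
\end{equation*}
The hypothesis $\dist((x,w), \graph\Argminlocg) \le r$ places $(x,w)$ in a neighborhood $U_j$ with associated branch $y$, and since $\varphi^\infty(x,\cdot) = f(x,y(\cdot))$ coincides with the function $\varphi$ of \Cref{lemma:local-convexity} so that $\nabla_x \varphi^\infty(x,z) = \nabla \varphi(x)$, subtracting $\nabla \varphi(x)$ and invoking the second and third estimates of that lemma gives
\begin{equation*}
\|\nabla_x \varphi^{k+l}(x,z) - \nabla_x \varphi^\infty(x,z)\| \le C\rho^k + \|\jacnospace_x \cA^l(x,z)\|\, C\rho^k.
\end{equation*}
Finally I bound $\|\jacnospace_x \cA^l(x,z)\| \le \|\jacnospace_x \cA^l(x,z) - \jacnospace_x y(x)\| + H$ and absorb the residual constant using $(1+H)C\rho^k \le (1+H)C\,k\rho^k$ for $k \ge 1$, obtaining the stated bound with $c := (1+H)C$.

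The main obstacle is the uniformization in the first paragraph and the start of Part 1: passing from the pointwise neighborhoods of \Cref{lemma:local-convexity} to a single tube radius $r$ and uniform constants $\rho, c$ valid on all of $\cS \times \RR^m$. This rests on the compactness of $K$ (which is exactly where local boundedness in \Cref{assumption:constraint-qualification} is essential), a positive-distance/tube argument converting the finite cover into the radius $r$, and the uniform branch separation $\sigma$ that makes the attracting branch $y$ unique. Once these are secured, Part 1 is immediate and Part 2 is clean chain-rule bookkeeping; the explicit factor $k$ in the bound is needed only to absorb an $O(\rho^k)$ constant term (the argument is stated for $k \ge 1$).
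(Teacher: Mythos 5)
Your proof is correct and follows the same skeleton as the paper's: decompose $\graph \Argminlocg$ into the finitely many $C^2$ branches $\fn{i}$ of \Cref{prop:crit-local-structures}, invoke \Cref{lemma:local-convexity} at each point of the compact portion of the graph lying over (a neighborhood of) $\cS$, and uniformize $r,\rho,c$ by a finite subcover. Where you genuinely diverge is Part~2: the paper re-runs the Jacobian recursion \eqref{eq:estimation-jacobian} for $k$ further steps starting from step $l$ to control $\|\jacnospace_x \cA^{k+l}(x,z)-\jacnospace_x y(x)\|$ and then substitutes into \eqref{eq:estimation-gradient}, whereas you use the semigroup identity $\cA^{k+l}(x,z)=\cA^k(x,\cA^l(x,z))$ and the chain rule to write $\nabla_x\varphi^{k+l}(x,z)=\nabla_x\varphi^k(x,w)+\jacnospace_x\cA^l(x,z)^\top\nabla_z\varphi^k(x,w)$ with $w=\cA^l(x,z)$, then apply the second and third estimates of \Cref{lemma:local-convexity} directly at $(x,w)$. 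Your route avoids tracking the Jacobian of the composite map and makes transparent why the error splits into a $C\rho^k$ term plus a term proportional to $\|\jacnospace_x\cA^l(x,z)\|$; the paper's route produces the factor $k\rho^k$ intrinsically from the recursion, while you must absorb a constant via $\rho^k\le k\rho^k$, valid only for $k\ge 1$ (a harmless restriction that the stated bound effectively shares at $k=0$). Your explicit treatment of the tube radius and of the branch separation $\sigma$ is if anything more careful than the paper's; one small point: the bare condition $r<\sigma/2$ does not by itself exclude being $r$-close to two distinct branches at \emph{different} base points (one would need $r$ small relative to $\sigma$ and the Lipschitz constants of the $\fn{i}$), but your additional shrinking of each neighborhood so that $\|z-\fn{i}(x)\|<\sigma/2$ holds on it is exactly what secures uniqueness of the attracting branch, so the argument closes.
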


\begin{proof}
    Using \Cref{assumption:constraint-qualification} and \Cref{assumption:lowerLevel}), thanks to \Cref{prop:crit-local-structures}, there exists $N$ functions $\fn{1}, \ldots, \fn{N}$ which are $C^2$ and have distinct values such that $\Argminlocg$ is partitioned into their graphs, i.e., \cref{lemma:splitting-function-local-minima}. Therefore, the condition:
    $$\dist((\firstvar, \thirdvar), \TL{\graph} \Argminlocg) \leq r$$
    is equivalent to:
    $$\min_{i = 1, \ldots, N} \dist((\firstvar, \thirdvar), \graph \fn{i}) \leq r.$$
    Using this observation, we construct $(r, \rho, c)$ as follows: For any $(\firstvar, \thirdvar) \in \Argminlocg \cap \cS \times \RR^m$, let $y = \fn{i}$ (for some $i \in \{1, \ldots, N\}$) be the implicit function given by $\nabla_y g(x,y) = 0$ in a neighbor of $(x,z)$. By \Cref{lemma:local-convexity}, there exists an open ball centered at $(\firstvar, \thirdvar)$ with radius $r(\firstvar,\thirdvar)$ such that for any point $(\bar{\firstvar}, \bar{\thirdvar})$ in this open ball, there are two constants $K(\firstvar,\thirdvar) > 0, \rho(\firstvar,\thirdvar) \in (0,1)$ depending on $(\firstvar, \thirdvar)$ satisfying:
    \begin{enumerate}[leftmargin=*]
        \item Using the bound in \Cref{lemma:local-convexity}, we immediately get:
        \begin{equation}
            \label{eq:bound-iterates}
            \|\cA^k(\firstvar, \thirdvar) - y(\firstvar)\| \leq \rho(\firstvar,\thirdvar)^k\|\thirdvar - y(\firstvar)\|.
        \end{equation}
        \item By applying \eqref{eq:estimation-jacobian} $k$ times, we obtain:
        \begin{equation}
            \label{eq:bound-jacobian}
            \|\jacnospace_x \cA^{k + l}(x,z) - \jacnospace_x y(x)\| \leq (\|\jacnospace_x \cA^{l}(x,z) - \jacnospace_x y(x)\| + K(x,z)\TL{k}) \rho(x,z)^k    
        \end{equation}
         Combining \eqref{eq:estimation-gradient}, \eqref{eq:bound-iterates}, \eqref{eq:bound-jacobian} and the Lipschitz continuity of $\nabla_x f, \nabla_y f$ in the ball centered at $(x,z)$ with radius $r(x,z)$, we conclude that there is a constant $c(x,z)$ such that:
         \begin{equation*}
            \|\nabla_\firstvar \varphi^{k+l}(\firstvar,\thirdvar) - \nabla_\firstvar \varphi(\firstvar)\|_2 \leq (\|\jacnospace_x \TL{\cA^l(x,z)} - \jacnospace_x y(x)\| + K(x,z)\TL{k})c(x,z)\rho(x,z)^k.
        \end{equation*}
    \end{enumerate}

    By \Cref{lemma:splitting-function-local-minima}, $ [\graph\,  \Argminlocg]  \cap \left( \cS \times \RR^m\right)$ is the union of $N$ graphs of $C^2$ functions $\fn{1}, \ldots, \fn{N}$, restricted to $\cS$. By compactness, there exists a finite set: $
    {\cal F} \subseteq  [\graph\,  \Argminlocg]  \cap  \left( \cS \times \RR^m\right)$ and a finite covering:
    \begin{equation*}
      [\graph\,  \Argminlocg] \cap \left( \cS \times \RR^m\right) \subseteq \bigcup_{(\firstvar, \thirdvar) \in  {\cal F} } B((\firstvar, \thirdvar), r{(\firstvar, \thirdvar)}).
    \end{equation*}
    The lemma can be proved by choosing
    \begin{equation*}
            r := \min_{(\firstvar,\thirdvar) \in  {\cal F} } \frac{r{(\firstvar, \thirdvar)}}{2} > 0, \quad
            c := \max_{(\firstvar,\thirdvar) \in  {\cal F} } c(\firstvar, \thirdvar) > 0, \quad
            \rho := \max_{(\firstvar,\thirdvar) \in  {\cal F} } \rho(\firstvar, \thirdvar) \in (0,1).
    \end{equation*}
    It remains to show the uniqueness of the function $y$. Note that with this choice of $r$, from each $(\firstvar,\thirdvar)$ whose distance from $\Argminlocg$ is at most $r(\cS)$, $y$ is the only $C^2$ function whose $y(\firstvar) = \lim_k \cA^k(\firstvar,\thirdvar)$.
\end{proof}

\section{Convergence of the single-step multi-step gradient method}
\label{sec:single-step-multi-step}

Under \Cref{assumption:lowerLevel},  \Cref{assumption:constraint-qualification}), and natural assumptions on $f$, \Cref{algo:altenate-method} is actually an {\em inexact gradient descent method} \cite{tadic2011asymptotic,bolte2024inexact,arbel:hal-03869097} with non-vanishing steps on a function $f(x, y(x))$ where $y(x)$ is a $C^2$ function giving a local minimizers of $g_x$.

Formally, this can be stated as follows. Let $\varphi \colon x\mapsto f(x,y(x))$ be the value function. For all $\epsilon>0$, there is a lower bound on the number of lower level gradient steps $k$ in \Cref{algo:inner_gradient_descent}, such that \Cref{algo:altenate-method} can be rewritten as follows
\begin{equation*}
        \firstvar_{\ell+1} = \firstvar_{\ell} - \alpha_f (\grad_\firstvar \varphi(x) + \xi_\ell) \quad \text{where} \quad \|\xi_\ell\| \leq \epsilon \mbox{ for all }\ell \in \NN.
\end{equation*}

\subsection{Main convergence result} 

Before stating our main result let us recall a definition.
\begin{definition}[$\epsilon$-critical points and critical values]
    Given a differentiable function $h:\RR^n\to\RR$, the sets of \emph{$\epsilon$-critical points $\crit_\epsilon h$} and \emph{$\epsilon$-critical values $\vcrit_\epsilon h$} of $h$ are respectively defined defined through:
    \begin{equation*}
        \begin{aligned}
            \crit_\epsilon h &:= \{x \mid \|\nabla h(x)\| \leq \epsilon\},\\
            \vcrit_\epsilon h &:= h(\crit_\epsilon h).\\
         \end{aligned}
    \end{equation*}
\end{definition}

\smallskip
In the following, we consider a random choice of initialization whose distribution is absolutely continuous w.r.t. Lebesgue measure. It covers all practically used random initializations such as those based on Gaussian or uniform distributions, see for example \cite{kaiming2015delving}.

\begin{theorem}[Convergence property of \Cref{algo:altenate-method}]
    \label{theorem:convergence-property-alternate-method}
    Suppose that \Cref{assumption:lowerLevel} and \Cref{assumption:constraint-qualification} hold 
    and assume in addition that:
    \begin{itemize}[nosep]
    \item there exists some $\lambda > 0$ such that $\inf_{x,y} f(x,y) - \lambda\|x\|^2 > -\infty$,
	\item the initialization $x_0$ is chosen randomly with a distribution absolutely continuous with respect to the Lebesgue measure. 
    \end{itemize}
	Then, for any $\epsilon, \Delta> 0$, there is a sufficiently large inner iterations $k$ (cf. \Cref{algo:inner_gradient_descent}) and constants $C>0$ and $\rho \in (0,1)$, such that with probability at least $1 - \Delta$, there exists $y = \fn{i}, i \in \{1, \ldots, N\}$ (cf. \Cref{lemma:splitting-function-local-minima}) satisfying:
    \begin{equation*}
        \begin{aligned}
           & \lim_{\alpha_f \to 0^+} \limsup_{\ell \to \infty} &\;\; \dist(\varphi(x_\ell(\alpha_f)), \vcrit_\epsilon \varphi) = 0\\
           & \limsup_{\alpha_f \to 0^+} \limsup_{\ell \to \infty} &\;\; \dist(x_\ell(\alpha_f), \crit \varphi) \leq C\epsilon^\rho.\\
        \end{aligned}
    \end{equation*}
    where $\varphi(x) = f(x,y(x))$ is the value function, and $x_\ell(\alpha_f)$ is the sequence generated by \Cref{algo:altenate-method} with step size $\alpha_f$.
\end{theorem}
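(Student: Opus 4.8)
The plan is to recast \Cref{algo:altenate-method} as an \emph{inexact gradient descent} on the value function $\varphi(x) = f(x, \fn{i}(x))$ associated with a single local-minimum branch, and then to invoke the convergence theory of inexact semi-algebraic gradient methods \cite{bolte2024inexact}. Writing the outer update as
$$x_\ell = x_{\ell-1} - \alpha_f\bigl(\nabla\varphi(x_{\ell-1}) + \xi_\ell\bigr), \qquad \xi_\ell := \nabla_x \varphi^k(x_{\ell-1}, y_{\ell-1}) - \nabla\varphi(x_{\ell-1}),$$
the entire task reduces to producing, for any prescribed $\epsilon$, a \emph{single} number of inner steps $k$ (uniform in $\ell$) such that $\|\xi_\ell\| \leq \epsilon$ for every $\ell$, with probability at least $1-\Delta$ over the initialization. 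Once this bounded-error representation holds, the two displayed limits are precisely what \cite{bolte2024inexact} delivers for a semi-algebraic function, and $\varphi$ is semi-algebraic as a composition of the semi-algebraic $f$ with the semi-algebraic branch $\fn{i}$ furnished by \Cref{prop:crit-local-structures}.

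To control $\xi_\ell$ I would first \emph{enter}, and then stay inside, a tube around one branch. Fix a compact set and apply \Cref{theorem:uniform-convergence}: off a bad set of arbitrarily small measure (hence, by absolute continuity of the initialization, of probability $\leq \Delta$), the routine $\cA^k$ drives $y_1 = \cA^k(x_0,y_0)$ to within $\delta$ of $\argminloc g_{x_0}$; for $\delta$ and $\alpha_f$ small this places $(x_1,y_1)$ in the tube $\{\dist(\cdot,\graph\fn{i}) \leq r\}$ of \Cref{lemma:universal-contraction}. This tube is then \emph{forward invariant}: by part~1 of \Cref{lemma:universal-contraction} the inner loop contracts, $\|y_\ell - \fn{i}(x_{\ell-1})\| \leq \rho^k r$, while the outer step displaces $x$ by at most $\alpha_f B$ (a uniform gradient bound on the compact set), so that
$$\dist\bigl((x_\ell, y_\ell), \graph \fn{i}\bigr) \leq \|y_\ell - \fn{i}(x_\ell)\| \leq \rho^k r + L_y\,\alpha_f B \leq r$$
once $k$ is large and $\alpha_f$ small, $L_y$ being a Lipschitz constant of $\fn{i}$. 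Since $\fn{i}$ is globally defined (\Cref{lemma:extension-to-Rn}) the branch index never changes, and part~2 of \Cref{lemma:universal-contraction} (with $l=0$, since $(x_{\ell-1},y_{\ell-1})$ already lies in the tube) then bounds $\|\xi_\ell\| \leq (C' + k)c\,\rho^{k}$, which is below $\epsilon$ for $k$ large enough. This yields the sought representation.

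It remains to guarantee that the iterates never leave the compact set on which all constants were calibrated. Here the hypothesis $\inf_{x,y} f(x,y) - \lambda\|x\|^2 > -\infty$ makes $\varphi$ coercive, since $\varphi(x) \geq \lambda\|x\|^2 + m_0$ with $\lambda > 0$; together with the approximate-descent inequality --- valid because $\langle \nabla\varphi(x_\ell), \nabla\varphi(x_\ell)+\xi_\ell\rangle \geq \|\nabla\varphi(x_\ell)\|^2 - \epsilon\|\nabla\varphi(x_\ell)\|$ is nonnegative wherever $\|\nabla\varphi(x_\ell)\| > \epsilon$ --- this confines the iterates to a bounded sublevel set (up to an $O(\alpha_f\epsilon)$ enlargement), exactly as in the boundedness step of \cite{bolte2024inexact}. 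With boundedness, the local Lipschitz continuity of $\nabla\varphi$ (valid as $\varphi$ is $C^2$), and the bounded-error representation in hand, \cite{bolte2024inexact} yields both the convergence of $\varphi(x_\ell)$ to $\vcrit_\epsilon\varphi$ and the bound $\dist(x_\ell,\crit\varphi) \leq C\epsilon^\rho$, the exponent $\rho$ here being a \L{}ojasiewicz exponent of $\varphi$ (not the contraction rate above).

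The step I expect to be the main obstacle is the forward-invariance bootstrapping of the second and third paragraphs: one must select a single $k$ and a range of $\alpha_f$ that simultaneously (i) keep every iterate in one tube, (ii) freeze the branch index $i$, and (iii) keep iterates in the calibration compact set --- requirements that are coupled, since that compact set fixes the constants $r,\rho,c,B,L_y$, while the error bound needed for descent (hence for boundedness) is itself only available once the iterates are known to lie in it. Breaking this circularity requires an induction propagating ``in tube and bounded'' from step $\ell$ to $\ell+1$, a careful ordering of the quantifiers --- first fix $\epsilon,\Delta$ and hence the compact set and the bad set, then $k$, and only then let $\alpha_f \to 0^+$ --- and a verification that the randomness is consumed once, at initialization.
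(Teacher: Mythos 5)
Your proposal is correct and follows essentially the same route as the paper: recasting \Cref{algo:altenate-method} as an inexact gradient method on a single branch value function $f(x,\fn{i}(x))$, entering a tube around $\graph \fn{i}$ via \Cref{theorem:uniform-convergence}, propagating ``in tube and bounded'' by induction using \Cref{lemma:universal-contraction} together with the coercivity-based descent argument, and then invoking the inexact gradient convergence theory of \cite{bolte2024inexact}. The circularity you flag is exactly what the paper resolves in \Cref{theorem:equivalence-inexact-gd} (via \Cref{lemma:first-iter-dynamics}, \Cref{lemma:remaining-iterations} and \Cref{lemma:quasi-descent}), with the same quantifier ordering you describe.
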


\begin{remark}
    \label{remark:coercivity}
    In \Cref{theorem:convergence-property-alternate-method}, the assumption that $f$ is semi-algebraic (cf. \Cref{assumption:lowerLevel}) and $\inf_{x,y} f(x,y) - \lambda\|x\|^2 > -\infty$ is equivalent to $f(x,y) = h(x,y) + \lambda \|x\|^2$ with $h(x,y)$ is semi-algebraic and bounded from below. This is a coercivity assumption. This condition is satisfied in many practical settings (e.g., $f$ is a regularized version of a semi-algebraic non-negative data fitting term or a distance function $h(x,y)$). In addition, given a semi-algebraic function $f$ where $\inf f > -\infty$, by adding a $\|\cdot\|^2$ perturbation, the assumption in \Cref{theorem:convergence-property-alternate-method} on $f$ will be satisfied. 

    \TL{Except \Cref{theorem:uniform-convergence} and \Cref{lemma:local-convexity}, semi-algebraicity is necessary in the announcement of \Cref{theorem:convergence-property-alternate-method} and intermediate results used in its proof. They ensure that the minimum values of certain finite quantities are strictly bigger than zero and that the inexact gradient method approximates the set of critical points \cite{bolte2024inexact}. Relaxing this condition is an interesting direction for future works, given our remark \Cref{remark:uniform-boundedness} on the uniform boundedness of the cardinality of $\crit g_x$ and \cite[Remark 3.2]{bolte2024inexact}.}
\end{remark}
Before proving \Cref{theorem:convergence-property-alternate-method}, we comment on related results in the literature, notably \cite[Proposition 3]{arbel:hal-03869097} which studies the same algorithm:
\begin{enumerate}
    \item Our proof of \Cref{theorem:convergence-property-alternate-method} is based on \Cref{lemma:first-iter-dynamics} and \Cref{lemma:remaining-iterations}, which imply that $y_\ell$ stays close to a local minimum of $g_{x_\ell}$, for all $\ell \geq 1$, with high probability over initialization. It allows us to relax the assumption used in \cite[Proposition 1]{arbel:hal-03869097}: $(x_\ell,y_\ell)$ converges to $(x^\star, y^\star)$ where $y^\star \in \argminloc \, g(x^\star,\cdot)$.
    \item The fact that \Cref{algo:altenate-method} is equivalent to a biased gradient descent on a certain solution function was previously observed, see \cite{ji2021bilevel} (strongly convex $g_x$) and \cite{arbel:hal-03869097} (local analysis for non-convex $g_x$). Our improvement on this aspect follows from the coercivity assumption on $f$ (cf. \Cref{remark:coercivity}). This is sufficient for the sequence to be bounded and allows for a global analysis of \Cref{algo:altenate-method} for non-convex $g_x$ without a priori assumption on the sequence. In addition, by harnessing the semi-algebraicity, we obtain estimates for the distance between the sequence $\{x_\ell\}_{\ell \in \NN}$ and the set of critical points of the bilevel problem, even for finite (and sufficiently large) $k$, whereas existing results \cite{arbel:hal-03869097,liu2021towards} have to settle with the limit $k \to \infty$ {(remind that $k$ is the number of inner iterations, and not to be confused with the number of outer iterations $\ell$)}.
\end{enumerate}
\TL{In conclusion, we think it is possible to relax the parametric Morse assumption to the parametric Morse-Bott one in \Cref{theorem:convergence-property-alternate-method} by combining analysis from this article and \cite{arbel:hal-03869097}. While this relaxation might be non-trivial and mathematically interesting, its significance is likely to be minor since the parametric Morse-Bott property is not generic, as that is the case for the parametric Morse property (see \Cref{ex:morseParamNongeneric}).}

\subsection{Gradient descent on the lower level}

We first prove that \Cref{algo:inner_gradient_descent} reaches a neighborhood of local minimizer of $g_{x_0}$ for a typical initialization. The higher the number of iterations $k$, the smaller the neighborhood and the error term $\|\xi_0\|$.
\begin{lemma}[First iterations approximate local minima]
    \label{lemma:first-iter-dynamics}
    Suppose that \Cref{assumption:lowerLevel} and \Cref{assumption:constraint-qualification} holds. 
    
	For any compact $C \subset \RR^n \times \RR^m$, any $\Delta, r, \epsilon > 0$, there exists $A \subset \RR^n\times \RR^m$ with Lebesgue measure at most $\Delta$, an integer $K \in \NN$ and ${\alpha} > 0$ such that if one runs \Cref{algo:altenate-method} with $k \geq K$, $ 0<\alpha_f \leq {\alpha}$ and $(x_0,y_0) \in C \setminus A$, there exists a $C^2$ function $y$ such that $\graph y \subseteq \graph\, \Argminlocg$ (see \Cref{prop:crit-local-structures}) and that:
    \begin{equation*}
        \begin{aligned}
			\|\nabla_x \varphi^k(x_0,y_0) - \nabla_x \varphi(x_0) \| & \leq \epsilon,\\
            \dist((x_1,y_1), \graph y) & \leq r,
        \end{aligned}
    \end{equation*}
	where $\varphi(x) = f(x, y(x))$, $\varphi^k(x,z) = f(x, \cA^k(x,z))$, {$(x_0, y_0), (x_1,y_1)$ are the zeroth (initialization) and the first iterates of \Cref{algo:altenate-method}, respectively}.
\end{lemma}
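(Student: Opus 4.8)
The plan is to combine the generic convergence-to-local-minima result (\Cref{theorem:uniform-convergence}) with the uniform attraction estimates (\Cref{lemma:universal-contraction}), by splitting the $k$ inner iterations into a first batch that drives the iterate into the basin of a local-minimum branch and a second batch on which uniform linear convergence takes over. Concretely, I would first set $\cS := \proj(C)$ (the projection of $C$ onto the $\firstvar$-coordinates) and apply \Cref{lemma:universal-contraction} to this compact set to obtain constants $r_0 > 0$, $\rho_0 \in (0,1)$, $c_0 > 0$. Next I would invoke \Cref{theorem:uniform-convergence} with the compact set $C$, tolerance $\delta := r_0$, and the given $\Delta$; this produces the exceptional set $A$ with $\mu(A) \leq \Delta$ and an integer $l \in \NN$ such that for every $(\firstvar_0, \secondvar_0) \in C \setminus A$ and every $j \geq l$ one has $\dist(\cA^j(\firstvar_0, \secondvar_0), \argminloc g_{\firstvar_0}) \leq r_0$. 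Since any $\bar{\secondvar} \in \argminloc g_{\firstvar_0}$ yields a point $(\firstvar_0, \bar{\secondvar}) \in \graph \Argminlocg$, this gives $\dist\big((\firstvar_0, \cA^l(\firstvar_0, \secondvar_0)), \graph \Argminlocg\big) \leq r_0$, so the hypothesis of the second part of \Cref{lemma:universal-contraction} is met at iteration $l$.

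For the \emph{gradient inequality}, writing the $k$ inner steps as $l$ steps followed by $k - l$ steps and applying the second estimate of \Cref{lemma:universal-contraction} supplies a \emph{unique} $C^2$ branch $y$ with $\graph y \subseteq \Argminlocg$ (necessarily one of the $\fn{i}$, $i \leq N$, from \cref{lemma:splitting-function-local-minima}) and the bound
\[
\|\nabla_x \varphi^{k}(\firstvar_0, \secondvar_0) - \nabla_x \varphi(\firstvar_0)\| \leq \big(\|\jacnospace_x \cA^{l}(\firstvar_0, \secondvar_0) - \jacnospace_x y(\firstvar_0)\| + (k - l)\big)\, c_0\, \rho_0^{\,k - l},
\]
where I use $\varphi^\infty(\firstvar_0,\secondvar_0) = f(\firstvar_0, y(\firstvar_0)) = \varphi(\firstvar_0)$. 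Because $\cA^l$ is $C^1$ and $l$ is fixed, $\|\jacnospace_x \cA^l\|$ is bounded uniformly over the compact $C$, and each $\|\jacnospace_x \fn{i}\|$ is bounded on the compact $\cS$ over the finitely many branches; hence the prefactor grows at most linearly in $k$ while $\rho_0^{\,k-l}$ decays exponentially, so choosing $K$ large forces the right-hand side below $\epsilon$.

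For the \emph{distance inequality} I would control both coordinates of $(\firstvar_1, \secondvar_1)$. Using the semigroup identity $\cA^{k}(\firstvar_0,\secondvar_0) = \cA^{k-l}(\firstvar_0, \cA^l(\firstvar_0,\secondvar_0))$ and the first part of \Cref{lemma:universal-contraction} applied from iteration $l$ onward gives $\|\secondvar_1 - y(\firstvar_0)\| = \|\cA^k(\firstvar_0, \secondvar_0) - y(\firstvar_0)\| \leq \rho_0^{\,k-l} r_0$, which is small for $k \geq K$. For the outer update, $\|\firstvar_1 - \firstvar_0\| = \alpha_f \|\nabla_x \varphi^k(\firstvar_0, \secondvar_0)\| \leq \alpha_f\,(\|\nabla_x \varphi(\firstvar_0)\| + \epsilon)$, and the bracket is bounded on the compact set, so $\|\firstvar_1 - \firstvar_0\|$ is $O(\alpha_f)$. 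Using the Lipschitz constant $L_y$ of $y$ on a fixed compact enlargement of $\cS$, I would estimate
\[
\dist\big((\firstvar_1, \secondvar_1), \graph y\big) \leq \|\secondvar_1 - y(\firstvar_1)\| \leq \|\secondvar_1 - y(\firstvar_0)\| + L_y\|\firstvar_1 - \firstvar_0\| \leq \rho_0^{\,k-l} r_0 + L_y\,\alpha_f\,(\|\nabla_x\varphi(\firstvar_0)\| + \epsilon),
\]
which drops below $r$ once $K$ is large and $\alpha := \alpha_f$ is small.

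The step I expect to be the main obstacle is the gradient bound: the number $l$ of iterations needed to enter the uniform-convergence region depends on the initialization, and only after removing the measure-$\Delta$ set $A$ does \Cref{theorem:uniform-convergence} make $l$ \emph{uniform} over $C \setminus A$. One then has to verify that the prefactor $\|\jacnospace_x \cA^l - \jacnospace_x y\| + (k-l)$ stays bounded by a fixed polynomial in $k$, uniformly in the initialization, so that the exponential factor $\rho_0^{\,k-l}$ wins for a single, uniform choice of $K$. The remaining arguments --- the two coordinate estimates for the distance bound and the identification of $y$ as the unique local-minimum branch via \Cref{lemma:universal-contraction} --- are routine once this uniformity is secured.
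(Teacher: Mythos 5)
Your proposal is correct and follows essentially the same route as the paper's proof: invoke \Cref{theorem:uniform-convergence} on the compact $C$ to get the exceptional set $A$ and a uniform entry time into the $r(\cS)$-band around $\graph\,\Argminlocg$, then hand off to \Cref{lemma:universal-contraction} for the contraction and gradient estimates, and finally control $\|x_1-x_0\|$ via a small $\alpha_f$. The only cosmetic difference is in the last step, where you bound $\dist((x_1,y_1),\graph y)$ through $\|y_1 - y(x_1)\|$ using a Lipschitz constant of $y$, whereas the paper simply measures the distance to the point $(x_0, y(x_0)) \in \graph y$ and thereby avoids introducing that constant; both are valid.
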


\begin{proof} We use several results and the notation of \Cref{lemma:universal-contraction}.
	By \Cref{theorem:uniform-convergence}, we choose $\bar{K} := K(\cS, r'/2, \Delta)$ where: 
    \begin{enumerate}
		\item $\cS = \Pi(C)$ where $\Pi(\cdot)$ is the projection onto the first $n$ coordinates.
        \item $r' = \min(r(\cS), r)$, $r(\cdot)$ is defined as in \Cref{lemma:universal-contraction}).
        \item $A \subseteq \RR^n \times \RR^m$, has Lebesgue measure at most $\mu(A) \leq \Delta$ as given by \Cref{theorem:uniform-convergence}. 

    \end{enumerate}
    Then for any $(x_0,y_0) \in C \setminus A$ we have:
    $$\dist(\cA^{\bar{K}}(\firstvar_0, \secondvar_0), \argminloc\,g_{\firstvar_0}) \leq r'/2.$$
    Therefore, there exists a $C^2$ function $y = \fn{i}, i \in \{1, \ldots, N\}$ (where $N$ and $\fn{i}$ are defined as in \Cref{lemma:splitting-function-local-minima}) such that:
    \begin{equation*}
        \|\cA^{\bar{K}}(\firstvar_0, \secondvar_0) - y(\firstvar_0)\| \leq r'/2 \leq r / 2.
    \end{equation*}

    Let $\varphi(x) = f(x, y(x))$.
    Our next step is to bound $\|\nabla_\firstvar \varphi^k(x_0,y_0) - \nabla_\firstvar \varphi({\firstvar}_0)\|_2$ (note that we cannot use \Cref{lemma:universal-contraction} yet because we need $\|\secondvar_0 - y(\firstvar_0)\| \leq r(\Pi(\cS))$ to do so). To this end, we only consider iterations where $\cA^k(\firstvar_0,\secondvar_0)$ already entered the band of radius $r(\cS)$ around the graph of $y$. Let $\rho = \rho(\cS) \in (0,1), c = c(\cS) > 0$ as in \Cref{lemma:universal-contraction}. For $k \geq \bar{K}$, we have:
    \begin{equation*}
        \|\cA^k(\firstvar_0, \secondvar_0) - y(\firstvar_0)\| \leq r/2\rho^{k-\bar{K}} \leq r/2,
    \end{equation*}
    by a simple induction. Moreover, also by \Cref{lemma:universal-contraction},
    \begin{equation*}
        \|\nabla_\firstvar \varphi^{k}(\firstvar_0,\secondvar_0) - \nabla_\firstvar \varphi(\firstvar_0)\| \leq (\|\jacnospace_x \cA^{\bar{K}}(x_0,y_0) - \jacnospace_x y(x_0)\| + k - \bar{K})c\rho^{k-\bar{K}},
    \end{equation*}
    by our assumption on $k$. Increasing $k$ if necessary, the right hand side can be made smaller than $\epsilon$, because $\jacnospace_x \cA^{\bar{K}}(x,z)$ and $\jacnospace_x y(x)$ are continuous, and thus, bounded on the bounded set $\cS$. 

    Finally, define:
    \begin{equation}
        \label{eq:choice-alpha}
		{\alpha} = \min_{i = 1, \ldots, N, x \in \Pi(\cS)} \frac{r}{2(\|\nabla_x  f \circ (\mathtt{id}, \fn{i})(x) \| + \epsilon)},
    \end{equation}
    we have:
    \begin{equation}
        \label{eq:update-new-algo}
        \begin{aligned}
            \dist((\firstvar_1, \secondvar_1), \graph y) &\leq \|(\firstvar_1, \secondvar_1) - (\firstvar_0, y(\firstvar_0))\|\\
            & \leq \|\firstvar_1 - \firstvar_0\| + \|\secondvar_1 - y(\firstvar_0)\|\\
            & \leq {\alpha} \|\nabla_\firstvar \varphi^k(\firstvar_0, \secondvar_0)\| + \|\cA^k(\firstvar_0,\secondvar_0) - y(\firstvar_0)\|\\
			& \leq \frac{r}{2(\|\nabla_x  \left(f \circ (\mathtt{id}, y)\right)(x) \| + \epsilon)}\|\nabla_\firstvar \varphi^k(\firstvar_0, \TL{\secondvar_0})\| + \|\cA^k(\firstvar_0,\secondvar_0) - y(\firstvar_0)\|\\
            & \leq r/2 + r/2 = r,
        \end{aligned}
    \end{equation}
    by the choice of ${\alpha}$ in \eqref{eq:choice-alpha}.
\end{proof}

\subsection{Convergence proof of \Cref{algo:altenate-method}}

We establish, in the spirit of ``path following methods", see e.g. \cite{allgower1997numerical} and references therein, that the dynamics evolves  near the manifold of local minimizers corresponding to the local minimum selected in \Cref{lemma:first-iter-dynamics}.
\begin{lemma}[Dynamics of remaining iterations]
    \label{lemma:remaining-iterations}
    Assume that \Cref{assumption:lowerLevel} and \Cref{assumption:constraint-qualification} hold, then for all $M > 0, \epsilon > 0$, there exists $r > 0, K \in \NN, {\alpha} > 0$ such that: for any $\fn{\star} = \fn{i}, i \in \{1, \ldots, N\}$ ($\fn{i}$ and $N$ are defined as in \Cref{lemma:splitting-function-local-minima}), for all $\|x\| \leq M, k \geq K, \alpha_f \leq \alpha$, if $\dist((x,z), \graph \fn{\star}) \leq r$, then:
    \begin{equation}
        \begin{aligned}
			\|\nabla_x \varphi(x) - \nabla_x \varphi^k(x,z)\| &\leq \epsilon,\\
            \dist((x',z'), \graph \fn{\star})&\leq r\\
        \end{aligned}
    \end{equation}
	where $\varphi(x) = f(x,y^*(x))$, and $(x',y')$ is the iterate update of $(x,z)$ performed by \Cref{algo:altenate-method}, {i.e., if $(x,z) = (x_k, y_k)$, then $(x',z') = (x_{k+1},y_{k+1})$}.
\end{lemma}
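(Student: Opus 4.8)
The plan is to treat this as the inductive ``stay-near-the-branch'' step of a path-following argument: assuming the iterate is already within a thin band around the prescribed manifold $\graph \fn{\star}$, I would invoke \Cref{lemma:universal-contraction} with the degenerate inner count $l=0$ to control both the gradient error and the displacement, and then absorb the single outer step into the step size. I work on a fixed compact set $\cS$ containing $\{x\in\RR^n:\|x\|\le M\}$ with unit margin (say the closed ball of radius $M+1$), and I impose $r\le 1$ so that all foot points stay in $\cS$. By \Cref{prop:crit-local-structures}, $\Argminlocg$ is the disjoint union of the graphs of finitely many globally defined $C^2$ functions $\fn{1},\dots,\fn{N}$ with everywhere-distinct values, and I use this finiteness together with compactness to make every constant uniform in the branch index and in $x$.

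First I would fix the uniform constants. Applying \Cref{lemma:universal-contraction} to $\cS$ yields $r_0=r(\cS)>0$, $\rho=\rho(\cS)\in(0,1)$ and $c=c(\cS)>0$. Since the $\fn{i}$ have distinct values and $\cS$ is compact, the separation $\delta:=\min_{i\ne j}\min_{x\in\cS}\|\fn{i}(x)-\fn{j}(x)\|$ is strictly positive; I then require $r\le\min(r_0,\delta/3,1)$. This choice guarantees that whenever $\dist((x,z),\graph\fn{\star})\le r$ with $\|x\|\le M$, the unique $C^2$ branch produced by \Cref{lemma:universal-contraction} is exactly $\fn{\star}$, because any other branch is farther than $r$ by the separation bound and a uniform Lipschitz estimate of the $\fn{i}$ on $\cS$. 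Writing $y:=\fn{\star}$, part~1 then delivers the contraction $\|\cA^k(x,z)-y(x)\|\le\rho^k\|z-y(x)\|$.

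For the gradient estimate I would apply part~2 of \Cref{lemma:universal-contraction} with $l=0$, observing that $\cA^0(x,z)=z$ so that $\jacnospace_x\cA^0(x,z)=0$ and $\varphi^\infty(x,z)=f(x,y(x))=\varphi(x)$. This gives
\[
\|\nabla_x\varphi^k(x,z)-\nabla_x\varphi(x)\|\le(\|\jacnospace_x y(x)\|+k)\,c\,\rho^k\le(H+k)\,c\,\rho^k,
\]
where $H:=\max_{1\le i\le N}\sup_{x\in\cS}\|\jacnospace_x\fn{i}(x)\|<\infty$ by continuity, compactness and finiteness of the index set. As $\rho\in(0,1)$ the right-hand side tends to $0$, so there is $K_1$ with $(H+k)c\rho^k\le\epsilon$ for all $k\ge K_1$, which is the first claimed inequality. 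For the invariance of the band I would use the foot point $(x,y(x))\in\graph y$ and the estimate $\dist((x',z'),\graph y)\le\|x'-x\|+\|z'-y(x)\|$. The outer term is $\|x'-x\|=\alpha_f\|\nabla_x\varphi^k(x,z)\|\le\alpha_f(\|\nabla_x\varphi(x)\|+\epsilon)\le\alpha_f(G+\epsilon)$, where $G:=\max_i\sup_{x\in\cS}\|\nabla_x(f\circ(\mathtt{id},\fn{i}))(x)\|<\infty$; choosing $\alpha:=r/(2(G+\epsilon))$ makes it at most $r/2$. The inner term is $\|z'-y(x)\|=\|\cA^k(x,z)-y(x)\|\le\rho^k\|z-y(x)\|\le\rho^k(1+L)r$, where I pass from $\dist((x,z),\graph y)\le r$ to $\|z-y(x)\|\le(1+L)r$ via a uniform Lipschitz constant $L$ of the $\fn{i}$ on $\cS$; picking $K_2$ with $\rho^{K_2}(1+L)\le1/2$ makes it at most $r/2$ for $k\ge K_2$. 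Taking $K:=\max(K_1,K_2)$ closes both estimates.

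The genuinely delicate point is the identification of the abstract branch returned by \Cref{lemma:universal-contraction} with the prescribed $\fn{\star}$: this is exactly where the distinct-values property of \Cref{prop:crit-local-structures} and the compactness of $\cS$ (yielding uniform separation and uniform Lipschitz and Jacobian bounds over the finitely many branches) are indispensable. Everything else is a direct specialization of \Cref{lemma:universal-contraction} at $l=0$ together with the one-step step-size bound, so I expect no further obstacle beyond bookkeeping the constants.
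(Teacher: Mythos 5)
Your proposal is correct and follows essentially the same route as the paper: invoke \Cref{lemma:universal-contraction} (with $l=0$) on a compact set containing $B(0,M)$ to get the uniform contraction and the $(\|\jacnospace_x \fn{\star}(x)\|+k)c\rho^k$ gradient bound, take $k$ large to make both small, and reuse the step-size choice \eqref{eq:choice-alpha} to keep the next iterate in the band. The only difference is that you spell out the identification of the branch returned by \Cref{lemma:universal-contraction} with the prescribed $\fn{\star}$ via the uniform separation of the finitely many branches, a detail the paper leaves implicit.
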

\begin{proof}
    Let $\cS = B(0, M) \subseteq \RR^n$ and consider $r = r(\cS), \rho = \rho(\cS)$ and $c = c(\cS)$ as in \Cref{lemma:universal-contraction}. Using \Cref{lemma:universal-contraction}, we get:
    \begin{equation*}
        \begin{aligned}
            \|\cA^k(\firstvar, \thirdvar) - \fn{\star}(\firstvar)\| &\leq \rho^k\|\thirdvar - \fn{\star}(\firstvar)\|,\\
            \|\nabla_\firstvar \varphi^{k}(\firstvar,\thirdvar) - \nabla_\firstvar \varphi(\firstvar)\|_2 &\leq (\|\jacnospace_x \fn{\star}(x)\| + k)c\rho^k.
        \end{aligned}
    \end{equation*}
    where $\varphi(x) = f(x,\fn{\star}(x))$. Thus, for $k$ sufficiently large, we get:
    \begin{equation*}
        \begin{aligned}
            \|\cA^k(\firstvar, \thirdvar) - \fn{\star}(\firstvar)\| &\leq r/2,\\
            \|\nabla_\firstvar \varphi^{k}(\firstvar,\thirdvar) - \nabla_\firstvar \varphi(\firstvar)\|_2 &\leq \epsilon.
        \end{aligned}
    \end{equation*}
    Finally, to show that $\dist(x',y') \leq r$, we choose ${\alpha}$ as in \eqref{eq:choice-alpha} and repeat the same argument as in the proof of \Cref{lemma:first-iter-dynamics}.
\end{proof}

In the light of \Cref{lemma:first-iter-dynamics}, \Cref{lemma:remaining-iterations}, the convergence of \Cref{algo:altenate-method} boils down to the study of an \emph{inexact gradient descent} methods provided that the iterates remain \emph{bounded} (otherwise, \Cref{lemma:remaining-iterations} would fail). 
To establish the convergence results for \Cref{algo:altenate-method}, we first introduce a condition on $f,g$ ensuring that iterates are uniformly bounded. This condition is also used in \cite{bolte2024inexact} to study inexact gradient descent methods. We start with a descent mechanism, the arguments are classical.

\begin{lemma}[Quasi-descent lemma]
    \label{lemma:quasi-descent}
    Consider a $C^2$, semi-algebraic, and lower-bounded function $f$ whose $\crit_\epsilon f$ is non-empty and bounded for some $\epsilon > 0$. For any $l > \max_{z \in \crit_\epsilon f} f(z)$, there exist ${\alpha} > 0$ and $M>0$, such that for any $\alpha_f$,  $0<\alpha_f \leq {\alpha}$, and any $z \in \RR^n$ such that $f(z) \leq l$, the sequence defined by:
    \begin{equation}
        \label{eq:update-inexact-gd}
        x_0 = z, \qquad \qquad x_{\ell+1} = x_\ell - \alpha_f (\nabla f(x_\ell) + \xi_\ell), \|\xi_\ell\| \leq \epsilon, \forall \ell \in \NN,
    \end{equation}
	satisfies $f(x_\ell) \leq l$ and $\|x_\ell\| \leq M$, for all $\ell \in \NN$.
\end{lemma}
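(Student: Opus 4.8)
The plan is to deduce both conclusions from two facts: (a) the sublevel set $\{f\le l\}$ is \emph{bounded}, and (b) for $\alpha_f$ small the inexact recursion cannot push the value of $f$ above $l$ once it is already $\le l$. Granting (a) and (b), a one-line induction gives $f(x_\ell)\le l$ for all $\ell$, and then $\|x_\ell\|\le M$ with $M$ a radius of a ball containing $\{f\le l\}$. Throughout I set $v := \max_{z\in\crit_\epsilon f} f(z)$, finite because $\crit_\epsilon f$ is compact (it is bounded by hypothesis and closed as a preimage of $[0,\epsilon]$ under the continuous map $\|\nabla f\|$), so that $l>v$, and I fix $R>0$ with $\crit_\epsilon f \subseteq B(0,R)$, whence $\|\nabla f(x)\|>\epsilon$ for all $\|x\|>R$.

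I expect step (a) to be the main obstacle, and I would settle it by a gradient-flow arc-length argument. Let $x(\cdot)$ solve $\dot x=-\nabla f(x)$ with $x(0)=x$ for some $x$ satisfying $f(x)\le l$ and $\|x\|>R$. As long as $\|x(t)\|>R$ one has $\tfrac{d}{dt}f(x(t))=-\|\nabla f(x(t))\|^2<-\epsilon^2$, and since $f$ is bounded below the trajectory must reach $\overline{B(0,R)}$ at a finite time $\tau$; confinement to a bounded region via the estimate below rules out finite-time blow-up. For $t<\tau$ we have $\|\nabla f(x(t))\|>\epsilon$, so
\begin{equation*}
\int_0^\tau \|\dot x(t)\|\,dt = \int_0^\tau \|\nabla f(x(t))\|\,dt < \frac{1}{\epsilon}\int_0^\tau \|\nabla f(x(t))\|^2\,dt = \frac{f(x(0))-f(x(\tau))}{\epsilon} \le \frac{l-\inf f}{\epsilon}.
\end{equation*}
Hence $\|x\|\le \|x(\tau)\|+\tfrac{l-\inf f}{\epsilon}\le R+\tfrac{l-\inf f}{\epsilon}$, and this bound holds trivially when $\|x\|\le R$. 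Therefore $\{f\le l\}\subseteq B(0,M_0)$ with $M_0:=R+(l-\inf f)/\epsilon$, and since $f$ is $C^2$, the gradient $\nabla f$ is bounded by some $G$ and $L$-Lipschitz on the compact enlargement $\overline{B(0,M_0+1)}$.

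For step (b) I would apply the descent lemma on the segment $[x_\ell,x_{\ell+1}]$, which lies in $\overline{B(0,M_0+1)}$ once the step is short (true for small $\alpha_f$, as $x_\ell\in\overline{B(0,M_0)}$ by induction and the step has length $\le \alpha_f(G+\epsilon)$). With $d_\ell=\nabla f(x_\ell)+\xi_\ell$, using $\langle\nabla f(x_\ell),d_\ell\rangle\ge\|\nabla f(x_\ell)\|(\|\nabla f(x_\ell)\|-\epsilon)$ and $\|d_\ell\|\le G+\epsilon$,
\begin{equation*}
f(x_{\ell+1}) \le f(x_\ell) - \alpha_f\,\|\nabla f(x_\ell)\|\bigl(\|\nabla f(x_\ell)\|-\epsilon\bigr) + \tfrac{L\alpha_f^2}{2}(G+\epsilon)^2 .
\end{equation*}
Fix a threshold $l'\in(v,l)$. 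The set $\{l'\le f\le l\}$ is compact and disjoint from $\crit_\epsilon f$ (there $f\ge l'>v$), so $\min\|\nabla f\|=:\epsilon_1>\epsilon$ on it; for such $x_\ell$ the middle term is $\le-\alpha_f\epsilon_1(\epsilon_1-\epsilon)$, so the value strictly decreases (hence stays $\le l$) once $\tfrac{L\alpha_f}{2}(G+\epsilon)^2\le \epsilon_1(\epsilon_1-\epsilon)$. On $\{f<l'\}$ the same inequality bounds the increase by $\alpha_f G\epsilon+\tfrac{L\alpha_f^2}{2}(G+\epsilon)^2$, which is $\le l-l'$ for $\alpha_f$ small, giving $f(x_{\ell+1})\le l'+(l-l')=l$. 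Taking $\alpha>0$ below the finitely many thresholds on $\alpha_f$ produced above closes the induction, so $f(x_\ell)\le l$ for all $\ell$ and thus $\|x_\ell\|\le M_0=:M$. Note that semi-algebraicity is not actually needed for this lemma: $C^2$ regularity alone powers both the flow in (a) and the descent estimate in (b), the semi-algebraic structure entering only in the results that invoke it.
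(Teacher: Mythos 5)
Your proof is correct, and it follows the same overall skeleton as the paper's argument --- first show that the sublevel set $\{f\le l\}$ is contained in a fixed ball, then run a two-case induction showing that a sufficiently small inexact step cannot push the value above $l$ --- but both key ingredients are obtained by genuinely different means. For the boundedness of the sublevel set, the paper invokes \cite[Lemma 3.4]{bolte2024inexact}, which derives coercivity from semi-algebraicity combined with the boundedness of $\crit_\epsilon f$; you instead prove the containment $\{f\le l\}\subseteq B(0,R+(l-\inf f)/\epsilon)$ directly via the gradient-flow arc-length estimate, which is elementary, quantitative, and --- as you correctly observe --- shows that semi-algebraicity is dispensable for this particular lemma (it remains essential downstream, e.g.\ in \Cref{theorem:convergence-property-inexact-gd}). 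For the induction step, the paper splits on whether $x_\ell\in\crit_\epsilon f$: off $\crit_\epsilon f$ it uses the exact identity $f(y)-f(x)\le\tfrac{\alpha_f}{2}\left(\|\xi\|^2-\|\nabla f(x)\|^2\right)<0$, and on $\crit_\epsilon f$ it uses $f(x)\le\max_{z\in\crit_\epsilon f}f(z)$ together with a Lipschitz bound to cap the possible increase by $l-\max_{z\in\crit_\epsilon f}f(z)$. You split instead on the value $f(x_\ell)$ relative to an intermediate threshold $l'$, exploiting a uniform gradient lower bound $\epsilon_1>\epsilon$ on the compact slab $\{l'\le f\le l\}$. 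Both versions close the induction correctly; the paper's critical/non-critical dichotomy is marginally cleaner (no auxiliary $l'$, no compactness argument needed for $\epsilon_1$, and an exact decrease identity rather than a descent-lemma estimate), while yours has the merit of making the whole lemma self-contained and of isolating exactly where the semi-algebraic hypothesis is actually used in the paper.
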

\begin{proof}
	\cite[Lemma 3.4]{bolte2024inexact} ensures that $f$ is coercive. Set $S = \{x, \, f(x) \leq l\}$ which is compact and set $B_{0}$ a ball of radius $M_0$ containing $S$. Denote by $L_0$ a Lipschitz constants of $\nabla f$ on $B_{0}$ ({$L_0$} exists because $f$ is $C^2$). Set $M = M_0 + \frac{1}{L_0} (\max_{\|z \| \leq M_0} \|\nabla f(z)\| + \epsilon)$, denote by $B$ the corresponding ball (with the same center as $B_0$), set $L_f$ and $L_{\nabla f}\geq L_0$ the Lipschitz constants of $f$ and $\nabla f$ on $B$ respectively. Set
	\begin{align*}
		\alpha & = \min \left\{ \frac{1}{L_{\nabla f}}, \frac{l - \max_{z \in \crit_\epsilon f} f(z)}{L_f(L_f + \epsilon)}  \right\} 
	\end{align*}
	Fix arbitrary $x \in S$, $\alpha_f \in [0,\alpha]$ and $\|\xi\| \leq \epsilon$, let us show that $y = x - \alpha_f (\nabla f(x) + \xi) \in S$. We have $x \in B_0 \subset B$ and $\alpha_f \|\nabla f(x) + \xi\| \leq \frac{1}{L_0} (\max_{\|z \| \leq M_0} \|\nabla f(z)\| + \epsilon)$ so that $y = x - \alpha_f (\nabla f(x) + \xi) \in B$. 
	First if $x \not \in \crit_\epsilon f$, we can use the descent lemma, since $x,y \in B$, 
	\begin{align*}
		f(y) -  f(x)  &\leq \left\langle \nabla f(x), y-x \right\rangle + \frac{L_{\nabla f}}{2} \|y -x\|^2 \leq \left\langle \nabla f(x), y-x \right\rangle + \frac{1}{2 \alpha_f} \|y -x\|^2 \\ 
        & = \frac{1}{2 \alpha_f} \left( 2\left \langle \alpha_f \nabla f(x), y-x \right\rangle +  \|y-x\|^2 \right) \\
        &= \frac{1}{2 \alpha_f} \left(\left\|y-x + \alpha_f \nabla f(x)\right\|^2 - \left\|\alpha_f \nabla f(x)\right\|^2 \right) \\
		&= \frac{\alpha_f}{2} \left(\|\xi\|^2 - \|\nabla f(x)\|^2\right) < 0.
	\end{align*}
	We deduce that $y \in S$. Second, if $x \in \crit_\epsilon$, we have $\|y-x\| = \alpha_f\| \nabla f(x) + \xi\| \leq \alpha_f(L_f + \epsilon)$, and
	\begin{align*}
		f(y) \leq f(x) + L_f \|y-x\| \leq f(x) + \alpha_f L_f(L_f + \epsilon) \leq f(x) + l - \max_{z \in \crit_\epsilon f} f(z) \leq l
	\end{align*}
	and $y \in S$. This proves the desired result. 
\end{proof}

Combining \Cref{lemma:first-iter-dynamics,lemma:remaining-iterations,lemma:quasi-descent}, we get the following result:
\begin{theorem}[Inexact gradient descent equivalence of \Cref{algo:altenate-method}]
    \label{theorem:equivalence-inexact-gd}
	Under \Cref{assumption:lowerLevel} and \Cref{assumption:constraint-qualification}, assume that $\inf_{x,y} f(x,y) - \lambda\|x\|^2 > -\infty$ for some $\lambda > 0$.
	For any compact set $C \subseteq \RR^n \times \RR^m$, any $\epsilon, \Delta > 0$, there exists a set $A \subseteq \RR^n \times \RR^m$ with Lebesgue measure at most $\Delta$, $K \in \NN, {\alpha} > 0, M > 0$ such that if $x_0 \in C \setminus A$, $k \geq K, \alpha_f \leq \alpha$:
    \begin{enumerate}[leftmargin=*]
        \item The sequence $\{(x_\ell,y_\ell)|\ell \in \NN\}$ produced by \Cref{algo:altenate-method} is bounded in $B(0,M) \subseteq \RR^n \times \RR^m$.
		\item There is $y = \fn{i}, i \in \{1, \ldots, N\}$ (c.f. \Cref{prop:crit-local-structures}) such that, $\|\nabla_x \varphi(x_\ell) - \nabla_x \varphi^k(x_\ell,y_\ell)\| \leq \epsilon, \forall \ell \in \NN$, where $\varphi(x) = f(x,y(x))$.
    \end{enumerate}
\end{theorem}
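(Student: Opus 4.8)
The statement is the conjunction of the three preceding lemmas, so the plan is to glue them together through a single simultaneous induction on the outer index $\ell$; the only genuinely delicate issue is the order in which the constants are quantified. The object on which the induction runs is the value function $\varphi(x) = f(x,y(x))$ attached to the branch $y = \fn{i}$ that gets selected at initialization. Since each $\fn{i}$ is a globally defined semi-algebraic $C^2$ map (\Cref{prop:crit-local-structures}) and $\inf_{x,y} f(x,y) - \lambda\|x\|^2 > -\infty$, each $\varphi_i(x) := f(x,\fn{i}(x))$ is $C^2$, semi-algebraic and satisfies $\varphi_i(x) \ge \lambda\|x\|^2 + c$, hence is coercive; as explained in \Cref{remark:coercivity} (and \cite{bolte2024inexact}) this is exactly what makes the hypotheses of \Cref{lemma:quasi-descent} available for $\varphi_i$, shrinking $\epsilon$ beforehand if necessary so that $\crit_\epsilon \varphi_i$ is bounded. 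There are only $N$ such branches, which is what will let me make every constant uniform.

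First I would fix the constants in an acyclic order. Pick a level $l$ strictly above both $\max_{i} \sup_{x \in \Pi(C)} \varphi_i(x)$ (finite, by compactness of $C$) and $\max_{i} \max_{z \in \crit_\epsilon \varphi_i} \varphi_i(z)$, where $\Pi$ projects onto $\RR^n$; applying \Cref{lemma:quasi-descent} to each $\varphi_i$ at this common level $l$ and taking the worst case over $i$ yields a single radius $M > 0$ and a single stepsize $\alpha' > 0$ for which one inexact step $x \mapsto x - \alpha_f(\nabla\varphi_i(x) + \xi)$ with $\|\xi\| \le \epsilon$ and $\alpha_f \le \alpha'$ keeps the sublevel set $\{\varphi_i \le l\}$ invariant and inside $B(0,M)$. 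The crucial point is that $M$ is determined by $l$ and $\epsilon$ alone, with no reference to any proximity radius. With $M$ in hand I would then invoke \Cref{lemma:remaining-iterations} for this $M$ to obtain $r, K_1, \alpha_1$, and finally feed that $r$ (together with $C, \Delta, \epsilon$) into \Cref{lemma:first-iter-dynamics} to obtain the exceptional set $A$ of measure at most $\Delta$, an integer $K_0$ and $\alpha_0$. I set $K = \max(K_0, K_1)$ and $\alpha = \min(\alpha', \alpha_0, \alpha_1)$.

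The induction then propagates three invariants at once: $\varphi(x_\ell) \le l$; $\dist((x_\ell, y_\ell), \graph y) \le r$ for $\ell \ge 1$; and $\|\nabla_x\varphi^k(x_\ell,y_\ell) - \nabla_x\varphi(x_\ell)\| \le \epsilon$. The base case is \Cref{lemma:first-iter-dynamics}: for $(x_0,y_0) \in C \setminus A$ it selects the branch $y = \fn{i}$, bounds the error at step $0$ by $\epsilon$, and places $(x_1,y_1)$ within $r$ of $\graph y$; since $\varphi(x_0) \le l$, one application of the single-step invariance extracted from \Cref{lemma:quasi-descent} gives $\varphi(x_1) \le l$. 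For the inductive step, assuming the three invariants at $\ell$, the bound $\|x_\ell\| \le M$ (coming from $\varphi(x_\ell) \le l$) together with $\dist((x_\ell,y_\ell),\graph y) \le r$ is exactly the hypothesis of \Cref{lemma:remaining-iterations}, which returns both the error bound $\|\xi_\ell\| \le \epsilon$ and $\dist((x_{\ell+1},y_{\ell+1}),\graph y) \le r$; feeding $\|\xi_\ell\| \le \epsilon$ and $\varphi(x_\ell) \le l$ into the single-step invariance of \Cref{lemma:quasi-descent} closes $\varphi(x_{\ell+1}) \le l$. This establishes conclusion (2) for every $\ell$ and the boundedness of $\{x_\ell\}$ in $\RR^n$; the $y$-component is then controlled since $\dist((x_\ell,y_\ell),\graph y) \le r$ and $y$ is continuous on the compact set $\{\|x\| \le M\}$, so after enlarging $M$ we obtain conclusion (1) in $B(0,M) \subseteq \RR^n \times \RR^m$.

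The main obstacle is the circular coupling between the two control mechanisms: \Cref{lemma:remaining-iterations} needs the a priori bound $\|x_\ell\| \le M$ in order to guarantee that the trajectory hugs the selected manifold (and hence that the gradient error stays $\le \epsilon$), while \Cref{lemma:quasi-descent} needs the gradient error $\le \epsilon$ in order to guarantee $\|x_\ell\| \le M$. The resolution hinges on the observation above that the quasi-descent radius $M$ depends only on the level $l$, so it is available before any proximity radius is chosen; this is what makes the acyclic ordering of the three lemmas possible and lets the invariants be carried jointly rather than one after another. The remaining care is bookkeeping: making $l, M, r, K, \alpha$ uniform over the finitely many branches $\fn{1}, \dots, \fn{N}$ by the maxima and minima over $i$, and checking at the outset that $\varphi_i$ genuinely meets the hypotheses of \Cref{lemma:quasi-descent}, i.e. that coercivity forces $\crit_\epsilon \varphi_i$ to be nonempty (it contains a global minimizer) and, for $\epsilon$ small, bounded.
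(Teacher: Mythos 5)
Your proposal is correct and follows essentially the same route as the paper's proof: the same acyclic ordering of constants (quasi-descent level $l$ and radius $M$ first, then $r, K_1$ from the remaining-iterations lemma, then $A, K_2$ from the first-iteration lemma), the same uniformization over the finitely many branches $\fn{i}$, and the same joint induction on boundedness, proximity to $\graph y$, and the gradient-error bound. Your explicit remark that $M$ depends only on $l$ and $\epsilon$ (breaking the apparent circularity), and your note that $M$ must be enlarged to also cover the $y$-component, are welcome clarifications of points the paper leaves implicit.
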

\begin{proof}
    Since $f$ and $\fn{i}, i = 1, \ldots, N$ are semi-algebraic (cf. \Cref{prop:crit-local-structures}), $f_i \colon x \mapsto f(x,\fn{i}(x))$ is semi-algebraic. Moreover, each function $f_i - \lambda\|\cdot\|^2$, $i = 1, \ldots, N$ is trivially lower-bounded and each set $\crit_{\epsilon} f_i$, $i = 1, \ldots, N$ is bounded as proved in \cite[\TL{Proposition} 3.1]{bolte2024inexact}. The critical values $\vcrit_{\epsilon} f_i$ are also bounded for $i = 1,\ldots, N$ by continuity.

	Choose $l>0$ such that
	\begin{align*}
		&l > \max\{\vcrit_{\epsilon}\,  f \circ (\mathtt{id}, \fn{i})\}& l \geq \max_{x,y \in C} \left\{f(x, \fn{i}(x))\right\},\, \forall i = 1, \ldots, N.
	\end{align*}
	We choose $M > 0, \alpha_1 > 0$ as the maximal values given by \Cref{lemma:quasi-descent} applied to each $f_i$, $i = 1, \ldots, N$. Note that it follows from the statement of \Cref{lemma:quasi-descent} that the ball of radius $M$ should contain the sublevel sets of value $l$ for each $f_i$, $i=1 \ldots,N$.
	
	With $\epsilon$ and $M$ thus given, we choose $(r, K_1, \alpha_2)$ equal to the quantities $(r, K, \alpha)$ of \Cref{lemma:remaining-iterations}. In particular, if $\|x_\ell\| \leq M$, $k \geq K_1, \alpha_f \leq \alpha_2$ and $\dist((x_{\ell}, y_{\ell}), \graph \fn{i}) \leq r$, we have:
    \begin{enumerate}
        \item $x_{\ell+1}$ is updated by \eqref{eq:update-inexact-gd} applied to $f_i \colon x \mapsto f(x,\fn{i}(x))$.
        \item $\dist((x_{\ell+1}, y_{\ell+1}), \graph \fn{i}) \leq r$.
    \end{enumerate}
    for any index $i \in \{1, \ldots, N\}$.

	Next, given $C$, $\Delta > 0, r > 0, \epsilon > 0$, we obtain a subset $A$ of Lebesgue measure at most $\Delta$, $K_2 \in \NN, \alpha_3 > 0$ equal to $K$ and $\alpha$ defined as in \Cref{lemma:first-iter-dynamics}, respectively and a $C^2$ solution $y$ such that for any $(x_0,y_0) \in C \setminus A$, we have for any $k \geq K_2$:
    \begin{equation}
        \label{eq:first-iteration-alternate-algo}
        \begin{aligned}
            \|\nabla_x \varphi^k(x_0,y_0) - \nabla_x \varphi\| & \leq \epsilon,\\
            \dist((x_1,y_1), \graph y) & \leq r.
        \end{aligned}
    \end{equation}
	where $\varphi(x) := f(x,y(x))$ and $y = \fni$ for some $i \in \{1,\ldots,N\}$.

	Finally, we construct $\alpha = \min(\alpha_1, \alpha_2, \alpha_3), K = \max(K_1, K_2)$. The main claim will be proved by induction: for all initializations $(x_0,y_0) \in C\setminus A$, for all iterations $\ell \in \NN$, we have:
    \begin{enumerate}[nosep]
        \item $\|x_\ell\| \leq M$.
		\item $x_{\ell}$ is updated to $x_{\ell+1}$ by \eqref{eq:update-inexact-gd} applied to $\varphi$ given in \eqref{eq:first-iteration-alternate-algo} with the function $y$.
        \item $\dist((x_{\ell+1}, y_{\ell+1}), \graph y) \leq r$.
    \end{enumerate}
    \begin{itemize}[leftmargin=*]
        \item \emph{Base case}: by the construction of $K \geq K_2$, we have \eqref{eq:first-iteration-alternate-algo} satisfied and $f(x_0, y(x_0)) \leq l$ (hence, $\|x_0\| \leq M$).
        \item \emph{Induction}: Since $x_{\ell-1}, \ell \geq 1$ is updated by \eqref{eq:update-inexact-gd} applied to $\varphi$, by the construction of $\alpha \leq \alpha_1$, we have $\|x_\ell\| \leq M$. 
        Combining with the induction hypothesis $\dist((x_\ell,y_\ell), \graph y) \leq r$, using \Cref{lemma:remaining-iterations} and the construction of $\alpha_f \leq \alpha_2, K \geq K_1$, $x_{\ell + 1}$ is updated by \eqref{eq:update-inexact-gd}  applied to $\varphi$ and $\dist((x_{\ell+1}, y_{\ell+1}), \graph y) \leq r$. \qedhere
    \end{itemize}
\end{proof}

We now turns to the convergence of \Cref{algo:altenate-method}.
We recall the main convergence result for the inexact gradient descent given in \cite{bolte2024inexact}:
\begin{theorem}[Convergence for inexact gradient method with constant step size {\cite[Theorem 2]{bolte2024inexact}}]
    \label{theorem:convergence-property-inexact-gd}
    Consider a function $f$ $L$-Lipschitz, lower-bounded, semi-algebraic with $\crit_{\epsilon}$ bounded for some $\epsilon > 0$, there is $C > 0, \rho > 0$ such that for any $x_0 \in \RR^p$ and $x_\ell(\alpha)$ generated by inexact gradient descent method with error bounded by $\epsilon$ and fixed step size $\alpha$, we have:
    \begin{equation*}
        \begin{aligned}
            \lim_{\alpha \to 0^+} \limsup_{\ell \to \infty} &\;\; \dist(f(x_\ell(\alpha)), \vcrit_\epsilon f) &&= 0\\
            \limsup_{\alpha \to 0^+} \limsup_{\ell \to \infty} &\;\; \dist(x_\ell(\alpha), \crit f) &&\leq C\epsilon^\rho.\\
        \end{aligned}
    \end{equation*}
\end{theorem}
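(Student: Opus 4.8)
The plan is to combine the quasi-descent mechanism already established in \Cref{lemma:quasi-descent} with the two semi-algebraic tools on which the whole argument rests: the Kurdyka--\L{}ojasiewicz (KL) inequality and a \emph{distance-to-critical-set} \L{}ojasiewicz inequality of the form $\dist(x,\crit f)\le C\|\nabla f(x)\|^{\rho}$ valid on bounded sets. First I would fix $\epsilon$ and use \Cref{lemma:quasi-descent} to confine the whole trajectory $x_\ell(\alpha)$ generated by \eqref{eq:update-inexact-gd}, uniformly in small $\alpha$, to a fixed ball $B(0,M)$ on which $\nabla f$ is $L$-Lipschitz. All subsequent estimates are carried out on this compact capture region, so that boundedness of $\crit_\epsilon f$ and lower-boundedness of $f$ may be used freely.

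Next I would record the approximate descent inequality. Writing $g_\ell=\nabla f(x_\ell)+\xi_\ell$ so that $x_{\ell+1}=x_\ell-\alpha g_\ell$, the descent lemma together with $\langle\nabla f(x_\ell),g_\ell\rangle\ge\|g_\ell\|^2-\epsilon\|g_\ell\|$ and $\alpha\le 1/L$ yields
\[
f(x_{\ell+1})\le f(x_\ell)-\tfrac{\alpha}{2}\|g_\ell\|^2+\alpha\epsilon\|g_\ell\|.
\]
The right-hand increment is negative exactly when $\|g_\ell\|>2\epsilon$, so the dynamics strictly decreases $f$ (by a definite amount) outside the band $\{\|\nabla f\|\lesssim\epsilon\}$. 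Lower-boundedness then forces $\liminf_\ell\|\nabla f(x_\ell)\|=O(\epsilon)$: if instead $\liminf_\ell\|g_\ell\|>2\epsilon$, every late step would decrease $f$ by a fixed positive quantity, driving $f\to-\infty$, a contradiction.

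The heart of the argument---and the main obstacle---is upgrading this $\liminf$ statement to the $\limsup$ control demanded by both displayed limits, along a sequence that is \emph{not} monotone because of the bias $\xi_\ell$. Here I would invoke the KL inequality for the semi-algebraic $f$: using a desingularizing function as a Lyapunov potential, one shows that once the trajectory enters a KL neighborhood of an accumulation point it is \emph{captured}, the summed step lengths being finite up to an $O(\epsilon)$ floor. This capture argument converts the transient $\liminf$ bound into $\limsup_\ell\|\nabla f(x_\ell)\|\le\epsilon+O(\alpha)$, where the $O(\alpha)$ accounts for the one-step jitter $\|x_{\ell+1}-x_\ell\|\le\alpha\|g_\ell\|$. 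For the \emph{value} limit, I would note that at captured iterates $f(x_\ell)\in\vcrit_{\epsilon+O(\alpha)}f$, and that semi-algebraicity makes $\vcrit_\delta f$ vary continuously (indeed semi-algebraically) in $\delta$; letting $\alpha\to0^+$ kills the jitter and gives $\limsup_\ell\dist(f(x_\ell(\alpha)),\vcrit_\epsilon f)\to0$, which is the first claim.

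Finally, for the distance estimate I would use the distance-to-critical-set \L{}ojasiewicz inequality, a consequence of semi-algebraicity: on $B(0,M)$ there are $C>0$ and $\rho>0$ with $\dist(x,\crit f)\le C\|\nabla f(x)\|^{\rho}$, equivalently $\crit_\delta f$ lies in the $C\delta^\rho$-neighborhood of $\crit f$. Feeding in the $\limsup$ gradient bound gives $\limsup_\ell\dist(x_\ell(\alpha),\crit f)\le C(\epsilon+O(\alpha))^{\rho}$, and $\alpha\to0^+$ yields the stated $C\epsilon^{\rho}$. I expect the genuinely hard steps to be (i) the KL capture argument, since the bias destroys monotonicity and one must rule out indefinite drift while $\|\nabla f\|$ hovers near $\epsilon$, and (ii) extracting a \emph{uniform} exponent $\rho$ and constant $C$ over the compact region from the semi-algebraic structure; these are precisely the places where semi-algebraicity, and not merely $C^2$ smoothness, is indispensable.
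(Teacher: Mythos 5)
You should first note that the paper itself does not prove this theorem: it is imported verbatim from \cite[Theorem 2]{bolte2024inexact}, and the only in-paper content is the remark that the quantity $\bar{\epsilon}$ of \cite[Lemma 3.2]{bolte2024inexact} can be matched with the $\epsilon$ of their Assumption 1 (by coercivity everything happens in a compact set, KL is used only trivially away from critical points, and the definable metric subregularity result holds on arbitrary compacts). So your proposal must be judged as a reconstruction of the cited argument. Your outer scaffolding is consistent with it: confinement to a compact set (as in \Cref{lemma:quasi-descent}), the approximate descent inequality giving strict decrease whenever $\|\nabla f(x_\ell)\|>\epsilon(1+O(\alpha))$, and the semi-algebraic error bound $\dist(x,\crit f)\le C\|\nabla f(x)\|^{\rho}$ on compacts, which is exactly the ``definable metric subregularity'' the paper's remark alludes to and which is the correct source of the $C\epsilon^\rho$ term.

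The genuine gap is your ``KL capture'' step, which carries the entire weight of both displayed limits. With a \emph{non-vanishing} error bound $\epsilon$, the desingularization/Lyapunov mechanism for exact or vanishing-error descent does not apply: inside the band $\{\|\nabla f\|\le\epsilon\}$ there is no descent at all (the error can dominate the gradient and push $f$ uphill), so the KL inequality gives no trap, and indeed the cited proof uses KL only trivially at non-critical points. Your intermediate claim $\limsup_\ell\|\nabla f(x_\ell)\|\le\epsilon+O(\alpha)$ is asserted, not proved, and it is not obviously true: iterates can make repeated excursions out of the $\epsilon$-band, climbing by $O(\alpha\epsilon^2)$ per step while inside it and descending outside it, with no single-accumulation-point structure to exploit. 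What actually rules out persistent drift in \cite{bolte2024inexact} is not KL capture but the \emph{finite structure of the semi-algebraic value set} $\vcrit_\epsilon f$: it has finitely many connected components separated by positive gaps; whenever $f(x_\ell)$ lies in a gap one has $\|\nabla f(x_\ell)\|>\epsilon$, hence strict definite decrease, so the trajectory can traverse each gap only downward and only finitely often, and overshoot past the top of a component is $O(\alpha)$ per step. This gap-crossing argument, not a desingularizing potential, yields the first display, and combined with metric subregularity it yields the second; your sketch replaces it with an unproven capture claim, so both conclusions inherit the gap. A secondary flaw: your appeal to ``continuity of $\delta\mapsto\vcrit_\delta f$'' is unjustified and unnecessary --- $\epsilon$ is fixed in the statement, and the $O(\alpha)$ oscillation is absorbed by the limit $\alpha\to0^+$ through the gap structure, not through continuity in $\delta$.
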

\begin{remark}
	One needs to argue that in \cite[Lemma 3.2]{bolte2024inexact}, the quantity $\bar{\epsilon}$ can be chosen to be the same as in \cite[Assumption 1]{bolte2024inexact}. This is indeed the case, by coercivity, all arguments take place in a compact set, KL inequality is trivial for non-critical point, and the definable metric subregularity result used is given for an arbitrary compact.
\end{remark}
Combining \Cref{theorem:equivalence-inexact-gd} and \Cref{theorem:convergence-property-inexact-gd}, we obtain the proof for \Cref{theorem:convergence-property-alternate-method}.
\begin{proof}[Proof of \Cref{theorem:convergence-property-alternate-method}]
	Call $P$ the initial distribution over $(x_0,y_0)$ which is assumed to be absolutely continuous. 
	Let $C \subset \RR^n \times \RR^m$ be compact such that $P({(x,y) \in C}) \geq 1 - \frac{\Delta}{2}$. 
	By absolute continuity of the initialization, there is $\Delta'>0$ such that $P(A) \leq \frac{\Delta}{2}$ for any $A$ of Lebesgue measure at most $\Delta'$. We may now consider $A,K,\alpha,M$ as given by \Cref{theorem:equivalence-inexact-gd} (with measure threshold $\Delta'$), so that with probability $1 - \Delta$ over the random choice of $(x_0,y_0)$, the result of \Cref{theorem:convergence-property-inexact-gd} applies.

	In the light of \Cref{theorem:equivalence-inexact-gd}, we only need to verify the assumptions of \Cref{theorem:convergence-property-inexact-gd}. Due to the nature of the main claim, we can always assume that $\alpha_f$ is sufficiently small. For each $i = 1 \ldots, N$, set $f_i \colon x \mapsto f(x,\fn{i}(x))$ as in the proof of \Cref{theorem:convergence-property-inexact-gd}
    \begin{enumerate}
        \item $f_i, i = 1, \ldots, N$ are semi-algebraic: see proof of \Cref{theorem:equivalence-inexact-gd}.
        \item $f_i, i = 1, \ldots, N$ are lower-bounded, with bounded $\crit_{\epsilon}$: see proof of \Cref{theorem:equivalence-inexact-gd}.
        \item By \Cref{theorem:equivalence-inexact-gd}, and with sufficiently small $\alpha_f$, with high probability, the sequence $x_\ell(\alpha_f)$ are uniformly bounded in $B(0,M)$. Since $f_i$ is continuously differentiable, it is Lipchitz when restricted to a compact domain. Thus, with high probability, we can assume that $f_i$ is $L$-Lipschitz, for all $i \in \{1, \ldots, N\}$.
    \end{enumerate}
    By applying \Cref{theorem:convergence-property-inexact-gd}, we conclude the proof.
\end{proof}

\section{Pros and cons of the differentiable programming strategy}
\label{sec:diagonalGradient}
In this section, we study differentiable programming strategy for bilevel gradient method (i.e., \Cref{algo:diagonal-method}), where one minimizes $f(x,\cA^k(x,z))$ as a surrogate for \eqref{eq:original-bilevel-optim}. An investigation of the behavior of this algorithm is of practical interest: for example, standard gradient algorithms to train MAML \cite{originalmaml} for meta learning can be interpreted as \Cref{algo:diagonal-method} applied to the MAML bilevel formulation \cite{implicitMAML}. In addition, replacing the constraint in \eqref{eq:original-bilevel-optim} by a fixed number of GD steps provides a scalable method to deal with \eqref{eq:original-bilevel-optim}, and was employed and studied in the bilevel literature (e.g., \cite[Algorithm 1]{liu2021towards}). 

Certain results in this section are announced with the ``genericity'' notion, similar to \Cref{prop:generic-ae-morse-parametric}. In this section, genericity has the following interpretation. Given $\cF$, a class of functions from $\RR^n$ to $\RR$, if a property $P$ is generic in $\cF$, then for any $f$, the set $\{a \in \RR, g(\cdot) := f(\cdot) + \innerproduct{a}{\cdot} \text{ satisfies } P\}$ is open, dense and of full Lebesgue measure. The term ``for generic $f$'' can therefore be understood as ``for any $f$ up to a typical linear perturbation'', see for instance \Cref{lemma:sard}.

The purpose of this section is to compare the optimization landscape of bilevel optimization problem \eqref{eq:original-bilevel-optim} and the minimization problem
\[
{\min} \;\; \varphi^k(\firstvar,\thirdvar) := f(\firstvar, \algo^k(\firstvar,\thirdvar))\mbox{ for $k\in\NN$.}
\]
As we cannot formally take $k=\infty$, this approach is connected to 
the unconstrained version of problem \eqref{eq:original-bilevel-optim} which we refer to as the single-level problem
\begin{equation}
    \label{eq:singleLevel}
    \tag{SL}
    \begin{aligned}
        \underset{\firstvar \in \RR^n,\secondvar \in \RR^m}{\min} & \quad f(\firstvar,\secondvar) .
    \end{aligned}
\end{equation}

\subsection{Equivalence with the unconstrained single level problem}
A fundamental property of the gradient descent map is that it defines a \emph{global diffeomorphism} of the space.
The following result is well known in optimization, we provide a proof for completeness.
\begin{lemma}[Gradient descent defines a global diffeomorphism]
	Under \Cref{assumption:lowerLevel} (\TL{except semi-algebraicity of $f$ and $g$}), the mapping $(x,z) \mapsto (x,\algo^k(x,z))$ is a global diffeomorphism and the inverse is given by $k$ applications of
	\begin{align*}
		\algo^{-1}(x, \cdot)  = \prox_{-\alpha g_x}
	\end{align*}
	which is well-defined and single valued.
	\label{lem:proxInverse}
\end{lemma}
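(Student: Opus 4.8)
The plan is to prove this in two stages: first show that a single gradient-descent step $y \mapsto y - \alpha_g \nabla_y g(x,y)$ is a diffeomorphism of $\RR^m$ (for each fixed $x$), and then identify its inverse with the proximal map of the \emph{concave} function $-g_x$. The result for $\cA^k$ then follows by composing $k$ diffeomorphisms and noting that the $x$-coordinate is carried along unchanged, so $(x,z) \mapsto (x, \cA^k(x,z))$ is a diffeomorphism of $\RR^n \times \RR^m$.

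First I would fix $x$ and write $T_x(y) = y - \alpha_g \nabla_y g(x,y)$ for the one-step map. Its Jacobian in $y$ is $\bI - \alpha_g \nabla^2_{yy} g(x,y)$. Under \Cref{assumption:lowerLevel}, $\nabla_y g_x$ is $L_g$-Lipschitz, so $\nabla^2_{yy} g(x,y) \preceq L_g \bI$; combined with $0 < \alpha_g L_g < 1$ this gives $\bI - \alpha_g \nabla^2_{yy} g(x,y) \succeq (1 - \alpha_g L_g)\bI \succ 0$, hence the Jacobian is everywhere positive definite, in particular invertible. By the inverse function theorem $T_x$ is a local diffeomorphism. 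To upgrade this to a \emph{global} diffeomorphism I would invoke a global-inversion criterion (Hadamard–Caccioppoli): $T_x$ is a proper local diffeomorphism onto $\RR^m$ once one checks properness, or alternatively one argues directly that $T_x$ is injective and surjective, which is what the proximal identification below delivers for free.

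The crux is the identification $T_x^{-1} = \prox_{-\alpha_g g_x}$, where for a function $h$ one sets $\prox_h(v) = \argmin_u \{ h(u) + \tfrac12\|u - v\|^2 \}$. Taking $h = -\alpha_g g_x$, the first-order optimality condition for $\prox_{-\alpha_g g_x}(v)$ is $-\alpha_g \nabla_y g_x(u) + (u - v) = 0$, i.e. $v = u - \alpha_g \nabla_y g_x(u) = T_x(u)$, so a minimizer $u$ of the proximal subproblem is exactly a preimage of $v$ under $T_x$. It remains to show this minimizer exists and is unique. Uniqueness follows because the proximal objective $u \mapsto -\alpha_g g_x(u) + \tfrac12\|u-v\|^2$ has Hessian $\bI - \alpha_g \nabla^2_{yy} g_x(u) \succeq (1-\alpha_g L_g)\bI \succ 0$, so it is strongly convex — this is precisely where $\alpha_g L_g < 1$ is used, turning the otherwise concave $-g_x$ into a strongly convex proximal objective. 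Existence follows since a strongly convex $C^2$ function is coercive and attains its minimum. Thus $\prox_{-\alpha_g g_x}$ is well-defined, single-valued, and is a two-sided inverse of $T_x$; smoothness of the inverse is automatic from the inverse function theorem (or from differentiating the optimality condition, using that the Hessian is invertible).

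The main obstacle is the global (as opposed to merely local) invertibility: a positive-definite Jacobian alone guarantees only a local diffeomorphism, and one genuinely needs the $\alpha_g L_g < 1$ condition to rule out the failure of global injectivity. The proximal reformulation is the clean way to dispose of this, since strong convexity of the proximal subproblem yields existence and uniqueness of the preimage in one stroke, simultaneously proving bijectivity and exhibiting the inverse in closed form. Finally, for $\cA^k$ I would note $(x,z)\mapsto(x,T_x(z))$ is a diffeomorphism of $\RR^n\times\RR^m$ (block-triangular Jacobian $\begin{psmallmatrix} \bI & 0 \\ * & \bI - \alpha_g \nabla^2_{yy}g \end{psmallmatrix}$, invertible), and the composition of $k$ such maps is again a diffeomorphism whose inverse is $k$ applications of $\prox_{-\alpha_g g_x}$ in the $y$-coordinate, as claimed.
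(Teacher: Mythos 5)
Your proof is correct and follows essentially the same route as the paper's: compute the Jacobian $\bI - \alpha_g \nabla^2_{yy} g(x,z)$, use $\alpha_g L_g < 1$ to get a local diffeomorphism, identify the inverse with $\prox_{-\alpha_g g_x}$ via the first-order optimality condition, and compose. The only cosmetic difference is that you obtain global injectivity and surjectivity together from the strong convexity of the proximal subproblem, whereas the paper proves injectivity directly from the Lipschitz estimate $\|y-z\| \leq \alpha_g L_g \|y-z\|$; both are valid and your handling of surjectivity is in fact slightly more explicit.
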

\begin{proof}
	For $k = 1$, and for any $x \in \RR^n$, the Jacobian of $\algo(x,\cdot)$ at $\thirdvar \in \RR^m$ is given by
	\begin{align*}
		\jacnospace_z \algo(x,z) = \bI - \alpha_g \nabla^2_{yy} g(x,z)	
	\end{align*}
	and by assumption $\alpha_g \|\nabla^2_{yy} g(x,z)\|_{\mathrm{op}} < 1$, so that this matrix is invertible and $\algo(x,\cdot)$ is a local diffeomorphism. It is actually a global diffeomorphism since 
	\begin{align*}
		\algo(x,y) = \algo(x,z) \qquad 
			&\implies \qquad  \|y - z\| =  \|\alpha_g( \nabla_y g(x,z) - \nabla_y g(x,y))\| \leq \alpha_g L_g \|y-z\| \\
			&\implies \qquad  \|y - z\|  (1 - \alpha_g L_g)  \leq 0 \\
			&\implies \qquad  \|y - z\| =  0,
	\end{align*}
	so that it is injective. Regarding the prox characterization, we have
	\begin{align*}
		0 = z-u - \alpha_g \nabla_y g(x,z) \qquad \iff \qquad z = \argmin_v \ \frac{1}{2} \|v - u\|^2 - \alpha_g g(x,v).
	\end{align*}
	The result for general $k$ follows by composition.
\end{proof}

\begin{proposition}[Equivalence with the single level problem]
    \label{prop:equivalence-single-level}
	Under \Cref{assumption:lowerLevel} (\TL{except semi-algebraicity of $f$ and $g$}), for any $k \in \NN$, up to a diffeomorphism, the critical points of $f$ are the same as those of $\varphi^k$, with the same objective value and the same index (local minimum/maximum, second order structure).
\end{proposition}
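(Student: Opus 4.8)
The plan is to leverage \Cref{lem:proxInverse}, which gives us precisely the structural fact we need: the map $\Phi \colon (x,z) \mapsto (x, \cA^k(x,z))$ is a global diffeomorphism of $\RR^n \times \RR^m$. The key observation is that $\varphi^k = f \circ \Phi$, so the optimization landscape of $\varphi^k$ is the pullback of the landscape of $f$ under a diffeomorphism. Since diffeomorphisms preserve critical points, critical values, and the second-order structure (index) of critical points, the result should follow from standard differential-calculus bookkeeping. The main conceptual point worth stressing in the write-up is that $\Phi$ only modifies the $z$-coordinate block, but this is immaterial: the equivalence claim is about $f$ as a function on the full product space $\RR^n \times \RR^m$ (the single-level problem \eqref{eq:singleLevel}) versus $\varphi^k$ on the same space, and $\Phi$ is a genuine diffeomorphism of that full space.

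First I would write $\varphi^k = f \circ \Phi$ and compute, by the chain rule, $\nabla \varphi^k(x,z) = \jac\Phi(x,z)^\top \nabla f(\Phi(x,z))$. Since $\jac\Phi(x,z)$ is invertible everywhere (this is exactly what \Cref{lem:proxInverse} establishes, as the Jacobian $\bI - \alpha_g \nabla^2_{yy} g$ is invertible and $\Phi$ acts as identity on the $x$-block), the gradient $\nabla \varphi^k(x,z)$ vanishes if and only if $\nabla f(\Phi(x,z)) = 0$. Hence $(x,z)$ is a critical point of $\varphi^k$ if and only if $\Phi(x,z)$ is a critical point of $f$, and $\Phi$ restricts to a bijection between $\crit \varphi^k$ and $\crit f$. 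The objective values agree by definition: $\varphi^k(x,z) = f(\Phi(x,z))$.

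For the second-order and index claims, I would use the standard fact that at a critical point the Hessian transforms by congruence under a change of variables. Concretely, if $\Phi(\bar{x},\bar{z}) = (x^\star,y^\star)$ is a critical point of $f$, then writing $J = \jac\Phi(\bar x,\bar z)$, one has $\nabla^2 \varphi^k(\bar x, \bar z) = J^\top \nabla^2 f(x^\star,y^\star) J$, where the terms involving second derivatives of $\Phi$ drop out because $\nabla f$ vanishes at $(x^\star,y^\star)$. By Sylvester's law of inertia, congruence by the invertible matrix $J$ preserves the signature of the Hessian, hence the number of positive, negative, and zero eigenvalues. This gives equality of the Morse index and, in particular, matches local minima with local minima and local maxima with local maxima, establishing the ``same index (local minimum/maximum, second order structure)'' assertion.

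The argument is essentially routine once \Cref{lem:proxInverse} is in hand, so I do not expect a serious obstacle; the only point requiring mild care is the Hessian congruence step, where one must justify discarding the curvature-of-$\Phi$ contribution precisely at critical points of $f$ (so that the transformation is a clean congruence rather than a general change). If one wanted to avoid even computing the Hessian, an alternative is to argue topologically: a diffeomorphism carries local minimizers (respectively maximizers, saddles) of $f$ to local minimizers (respectively maximizers, saddles) of $\varphi^k$ since it maps neighborhoods homeomorphically and preserves the function values, so the ordering of values near a critical point is preserved. I would present the Hessian/inertia argument as the primary one, as it directly yields the stronger ``same index'' statement.
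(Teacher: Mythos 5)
Your proposal is correct and follows essentially the same route as the paper: both invoke \Cref{lem:proxInverse} to view $\varphi^k = f\circ\Phi$ with $\Phi$ a global diffeomorphism whose Jacobian is block lower-triangular and invertible, deduce the bijection of critical points from the chain rule, and obtain the index claim via Hessian congruence at critical points (where the second-derivative terms of $\Phi$ vanish) together with Sylvester's law of inertia. No gaps; the only addition is your optional topological remark, which the paper states more briefly as ``the local minimum/maximum structure is preserved by diffeomorphisms.''
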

\begin{proof}
	Since $\varphi^k(x,z) = f(x, \algo^k(x,z))$ the equivalence follows from \Cref{lem:proxInverse}. We have
	\begin{align*}
		\nabla \varphi_k(x,z) &= 
		V_k^\top \nabla f(x, \algo^k(x,z)) 
	\end{align*}
    where
    \begin{align*}
        V_k = 
        \begin{pmatrix}
			I & 0 \\
			\jacnospace_x \algo^k(x,z) & \jacnospace_z \algo^k(x,z)
		\end{pmatrix},
    \end{align*}
	and the left matrix is invertible so that $\nabla \varphi_k(x,z) = 0$ if, and only if, $\nabla f(x, \algo^k(x,z)) = 0$. The local minimum / maximum structure is preserved by diffeomorphisms. As for the second order structure, for any critical point of $\varphi^k$, we have
	\begin{align*}
		\nabla^2 \varphi^k(x,z) = 
		V_k^\top 
		\nabla^2 f(x, \algo^k(x,z)) 
        V_k,
	\end{align*}
	because $\nabla f(x, \algo^k(x,z)) = 0$. Therefore, the Hessian of $\varphi^k$ is congruent to that of $f$ and they have therefore the same number of positive, negative, and null eigenvalues.
\end{proof}
{Note that due to the construction in the previous proof, for any $k\in\NN$, we have: $(x,y) \in \crit\, \varphi^k$ if and only if $(x, \cA^{-k}(x,z)) \in \crit f$ (and the function $\cA^{-k}(x,\cdot)$ is given as in \Cref{lem:proxInverse}).}

\subsection{Pseudo-stability of the differentiable programming strategy}

\Cref{prop:equivalence-single-level} is a rather negative result as it entails that there is no qualitative difference between $\varphi^k$ and $f$ in terms of their critical points. In other words, the approximate value function $\varphi^k$ \emph{ignores} the bilevel constraint. On the other hand, the goal of this section is to establish that near a local minimizer the algorithm has a pseudo-stability property: if the iterates meet a solution neighborhood, they tend to remain close for a long time. Although we describe an exponentially long stability duration, it is different from genuine stability as we empirically illustrate in \Cref{sec:illustration}, hence the term ``pseudo-stability''.

\smallskip

Let us recall that, given a $C^2$ function $h:\RR^n\to\RR$, the point $x^{\star} \in \RR^m$ is a strong local minimizer of $h$, if
    $$\nabla h(x^\star) = 0, \text { and } \nabla^2 h(x^\star) \succ 0.$$
    
    Assume $y^\star$ is a strong local minimizer for $g_{x^\star}$ for some couple $(x^\star,y^\star)$. Observe that, by the implicit function theorem, there is a local map $\bar{y}$ such that $\bar{y}(x)$ is a strict local minimum of $g_x$ in a neighborhood of $x^\star$.
    
    When $y$ is a $C^2$ curve which satisfies $y(x)\in \crit g_x$, and $x^\star$ is  strong local minimizer of $\varphi \colon x \mapsto f(x,y(x))$, {\em the pair $(x^\star,y^\star)$} is called a \emph{strong  local minimizer} of problem \eqref{eq:original-bilevel-optim}. 
    In particular, if one assumes \ref{assumption:constraint-qualification}, this describes a strong local minimizer of $f(x,\fn{i}(x))$ with $i \in \{1, \ldots,N\}$ (the quantities $N$, $\fn{i}$ being defined as in \Cref{prop:crit-local-structures}).

\begin{theorem}[Pseudo-stability around strong local minimizers]
	Let \Cref{assumption:lowerLevel} (\TL{except semi-algebraicity of $f$ and $g$}) hold and let $(\firstvar^\star,\thirdvar^\star)$ be a strong local minimum of \eqref{eq:original-bilevel-optim}.

	Then, there exists $\delta>0$ and $\gamma>0$, as well as constants $C>0$ and $\TL{0<\rho <1}$, such that for all $0 < \alpha_f \leq \gamma$, for all $k \in \NN$, \Cref{algo:diagonal-method}, initialized at $(x,z) \in \RR^n \times \RR^m$ such that $\|x - x^\star\| + \|z-z^\star\| < \delta$ satisfies $\|x_\ell- x^\star\| + \|z_\ell - z^\star\| \leq \delta$, for all
	\begin{align*}
		\ell \leq \frac{\delta - \|x - x^\star\| - \|z - z^\star\|}{\alpha_f C \rho^k}.
	\end{align*}
	\label{th:stabilityLocalMin}
\end{theorem}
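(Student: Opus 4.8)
The plan is to combine the two local estimates of \Cref{lemma:local-convexity} with the contraction of gradient descent near a strong local minimizer, so as to show that each iteration pushes $(x_\ell,z_\ell)$ away from $(x^\star,z^\star)$ by at most $O(\alpha_f\rho^k)$; summing these increments produces the announced stability window of length $O(\rho^{-k})$.

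First I would collect the local data. Writing $z^\star=y^\star$, the point $y^\star$ is a strong local minimizer of $g_{x^\star}$, so $\nabla^2_{yy}g(x^\star,y^\star)\succ 0$; inspecting the proof of \Cref{lemma:local-convexity}, this positive-definiteness is the only property it uses, so its estimates are available here even without global parametric Morse or semi-algebraicity. Thus there are a product neighborhood $U=B_1\times B_2$ of $(x^\star,z^\star)$, a rate $\rho\in(0,1)$ and a constant $C_0>0$ with
\[
\|\nabla_z\varphi^k(x,z)\|\le C_0\rho^k,\qquad \|\nabla_x\varphi^k(x,z)-\nabla\varphi(x)\|\le C_0\rho^k
\]
for all $(x,z)\in U$ and all $k\in\NN$, where $\varphi(x)=f(x,y^\star(x))$ and $y^\star$ is the $C^2$ implicit branch of $\nabla_yg=0$ through $(x^\star,y^\star)$ (which coincides near $x^\star$ with the curve defining the bilevel minimizer).

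Next I would set up the contraction in the $x$-block. Since $x^\star$ is a strong local minimizer of the $C^2$ function $\varphi$, after shrinking $B_1$ I may assume $\mu I\preceq\nabla^2\varphi\preceq LI$ on $B_1$ for some $0<\mu\le L$. Using $\nabla\varphi(x^\star)=0$ and convexity of $B_1$, write $\nabla\varphi(x)=H(x)(x-x^\star)$ with $H(x)=\int_0^1\nabla^2\varphi(x^\star+t(x-x^\star))\,dt$, so that $\mu I\preceq H(x)\preceq LI$ and
\[
\|x-\alpha_f\nabla\varphi(x)-x^\star\|^2=(x-x^\star)^\top(I-\alpha_fH(x))^2(x-x^\star)\le(1-\alpha_f\mu)^2\|x-x^\star\|^2
\]
whenever $0<\alpha_f\le 1/L=:\gamma$. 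Hence exact gradient descent on $\varphi$ is non-expansive toward $x^\star$ on $B_1$. Choosing $\delta\le\min(r_1,r_2)$ (the radii of $B_1,B_2$) so that $\|x-x^\star\|+\|z-z^\star\|\le\delta$ forces $(x,z)\in U$, and setting the theorem's constant $C:=2C_0$, I would then prove the per-step bound: if $(x_{\ell-1},z_{\ell-1})\in U$, decomposing the $x$-update into an exact gradient step plus the error $\nabla_x\varphi^k-\nabla\varphi$ gives $\|x_\ell-x^\star\|\le(1-\alpha_f\mu)\|x_{\ell-1}-x^\star\|+\alpha_fC_0\rho^k\le\|x_{\ell-1}-x^\star\|+\alpha_fC_0\rho^k$, while the $z$-update obeys $\|z_\ell-z^\star\|\le\|z_{\ell-1}-z^\star\|+\alpha_fC_0\rho^k$ directly. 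Adding, $d_\ell\le d_{\ell-1}+\alpha_fC\rho^k$ with $d_\ell:=\|x_\ell-x^\star\|+\|z_\ell-z^\star\|$.

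The conclusion then follows by induction on $\ell$: as long as $d_{\ell-1}\le\delta$ (so the estimates apply), one has $d_\ell\le d_0+\ell\alpha_fC\rho^k$, which stays below $\delta$ exactly when $\ell\le(\delta-d_0)/(\alpha_fC\rho^k)$, thereby both preserving the inductive hypothesis $(x_\ell,z_\ell)\in U$ and giving the claim. The main obstacle---and the only nontrivial point---is the contraction step: one must observe that, near a strong local minimizer, $\|x_\ell-x^\star\|$ increases by at most the gradient-approximation error $\alpha_fC_0\rho^k$ per iteration rather than by the full step length $\alpha_f\|\nabla_x\varphi^k\|$. This is precisely what upgrades a trivial $O(1/\alpha_f)$ stability bound into the exponential-in-$k$ window; the remaining work is bookkeeping to keep the iterates inside $U$, where \Cref{lemma:local-convexity} is valid.
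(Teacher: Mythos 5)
Your proposal is correct and follows essentially the same route as the paper's proof: both invoke \Cref{lemma:local-convexity} for the bounds $\|\nabla_z\varphi^k\|\le C\rho^k$ and $\|\nabla_x\varphi^k-\nabla\varphi\|\le C\rho^k$ on a neighborhood $U$, use local strong convexity of $\varphi$ to make the exact gradient step nonexpansive toward $x^\star$, and sum the resulting per-step drift $\alpha_f C\rho^k$ over $\ell$ iterations. Your explicit integral-Hessian justification of the contraction and your remark that the induction must keep the iterates inside $U$ are slightly more careful than the paper's write-up, but the argument is the same.
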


\begin{proof}[Proof of \Cref{th:stabilityLocalMin}]
	Let $U$, $\rho$ and $C$ be given by \Cref{lemma:local-convexity}.
	Since $g$ is $C^3$, the implicit function ${y}$ of the function $\nabla_y f(x,y) = 0$ is $C^2$. As a consequence, $\varphi(x) := f(x,y(x))$ is $C^2$. We may pick $\gamma>0$ such that $\gamma \|\nabla^2 \varphi(x)\| < 2$ uniformly on $U$. Furthermore, since $x^\star$ is a strict second order minimizer of $\varphi$ we may pick $\delta>0$ such that $\nabla^2 \varphi(x) \succeq 0$ for all $x$ such that $\|x- x^\star\| \leq \delta$, and such that $B(x^\star,\delta) \times B(z^\star,\delta) \subset U$.

	The function $\varphi$ is strongly convex on $B$ with a global minimizer at $x^\star$. This implies that the gradient mapping $x \mapsto x - \alpha \nabla \varphi(x)$ is a contraction on $B$ with global fixed point at $x^\star$. From the update of \Cref{algo:diagonal-method}, we obtain for all $\ell \geq 1$,
	\begin{align*}
		\|z_{\ell} - z^\star\| &= \|z_{\ell-1} - z^\star - \alpha_f \nabla_z \varphi^k(x_{\ell-1},z_{\ell-1})\| \leq \|z_{\ell-1} - z^\star\| + \alpha_f C \rho^k \leq\|z - z^\star\| + \ell \alpha_f C \rho^k, \\
	\end{align*}
    since $\|\nabla_z \varphi^k(x_{\ell-1},z_{\ell-1})\| \leq C\rho^k$ for some constants $C, \rho > 0$ in the neighborhood $U$ (cf. \Cref{lemma:local-convexity}). Moreover,
    \begin{align*}	
        \|x_{\ell} - x^\star\| &= \|x_{\ell-1} - x^\star - \alpha_f \nabla_z \varphi^k(x_{\ell-1},z_{\ell-1})\| \\
		&\leq  \|x_{\ell-1} - x^\star - \alpha_f \nabla \varphi(x_{\ell-1})\| +  \alpha_f \|\nabla \varphi(x_{\ell-1})- \nabla_x \varphi^k(x_{\ell-1},z_{\ell-1})\|\\  
		&\leq \|x_{\ell-1} - x^\star\| + \alpha_f C \rho^k\\ 
		&\leq \|x - x^\star\| + \ell\alpha_f C \rho^k,
	\end{align*}
    again, by using the estimation $\|\nabla \varphi(x_{\ell-1})- \nabla_x \varphi^k(x_{\ell-1},z_{\ell-1})\| \leq C\rho^k$ (cf. \Cref{lemma:local-convexity}).
	We conclude that if $\|z_{\ell} - z^\star\| + \|x_{\ell} - x^\star\| > \delta$, one has 
	\begin{align*}
		\|x - x^\star\| + \|z - z^\star\| + 2\ell\alpha_f C \rho^k \geq \delta
	\end{align*}
	and the result follows (ignoring the constant $2$).
\end{proof}

The following is a consequence of the Morse-Sard theorem (see proof in \cite[Chapter 1.7]{guillemin2010differential}).
\begin{lemma}[Linear perturbation of a semialgebraic function]
	\label{lemma:sard}
	Let $f,g$ be as in \Cref{assumption:lowerLevel} (\TL{except semi-algebraicity of $f$ and $g$}) and let $\bar{y} \colon \RR^n \to \RR^m$ be $C^2$.
	Generically, for $a \in \RR^n$, the function $x \mapsto  f(x,\bar{y}(x)) + \left\langle x, a \right\rangle$ is Morse.
\end{lemma}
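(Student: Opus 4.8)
The plan is to reduce the statement to the classical Morse--Sard fact that a generic linear perturbation of a $C^2$ function is Morse, applied to the reduced objective. Set $h(x) := f(x,\bar{y}(x))$. Since $f$ is $C^2$ and $\bar y$ is $C^2$, the composition $h$ is $C^2$, and the perturbed function $h_a(x) := h(x) + \innerproduct{a}{x}$ satisfies $\nabla h_a(x) = \nabla h(x) + a$ and $\nabla^2 h_a(x) = \nabla^2 h(x)$. Hence $h_a$ is Morse precisely when, at every solution $x$ of $\nabla h(x) = -a$, the Hessian $\nabla^2 h(x)$ is invertible: the linear term translates the critical points but leaves the Hessian, and therefore its (non)degeneracy, unchanged. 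This is the observation that makes a single $\RR^n$-parameter $a$ the right perturbation to use.

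The key step is then to view the gradient as a map $G := \nabla h \colon \RR^n \to \RR^n$, which is $C^1$ because $h$ is $C^2$. The critical points of $G$ are exactly the points where $\det \nabla^2 h = 0$, so an $x$ is a \emph{degenerate} critical point of $h_a$ if and only if $G(x) = -a$ and $x \in \crit G$; equivalently, $h_a$ fails to be Morse if and only if $-a$ is a critical value of $G$. Since $G$ is $C^1$ and maps $\RR^n$ into $\RR^n$ (equal dimensions, so the regularity threshold in Sard's theorem is met), Sard's theorem gives that the set $G(\crit G)$ of critical values has Lebesgue measure zero. Consequently the bad parameter set $\{a : h_a \text{ is not Morse}\} = -G(\crit G)$ has measure zero, so the good set has full measure and is therefore dense.

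The remaining, and most delicate, point is the \emph{openness} required by the notion of genericity used in this section. The argument above controls the bad set only up to measure, whereas openness of the good set is equivalent to $G(\crit G)$ being closed. This does not follow from $C^2$ regularity alone: if $G$ is not proper, the image of the (closed) critical set $\{\det \nabla^2 h = 0\}$ need not be closed, and degenerate critical values could accumulate. The clean way to secure closedness is properness of $G$, i.e. $\|\nabla h(x)\| \to \infty$ as $\|x\| \to \infty$, under which $\crit G$ is closed and its continuous proper image $G(\crit G)$ is closed, upgrading the full-measure, dense good set to an open one. In the regimes where this lemma is invoked the reduced objectives inherit enough coercivity for this to hold; absent such structure one still obtains the full-measure, dense conclusion, which is what the downstream arguments use. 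I would therefore organise the write-up as: (i) the reduction to $h$; (ii) the Sard argument yielding full measure and density; and (iii) a short properness remark upgrading to openness in the cases of interest, flagging (iii) as the only step beyond the textbook Morse--Sard proof.
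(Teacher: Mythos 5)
Your proposal is correct and is precisely the argument behind the paper's proof, which consists of a single line citing Guillemin--Pollack, Chapter~1.7, for the Morse--Sard fact that the bad parameters form the set $-\nabla h(\{\,x : \det \nabla^2 h(x) = 0\,\})$, of Lebesgue measure zero by Sard's theorem applied to the $C^1$ map $\nabla h \colon \RR^n \to \RR^n$. Your extra observation that \emph{openness} of the good set (which the paper's genericity convention in this section formally demands) does not follow from this alone and would require, e.g., properness of $\nabla h$, is a legitimate refinement of a point the paper glosses over.
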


Combining \Cref{lemma:sard} with \Cref{th:stabilityLocalMin}, we obtain the following corollary.
\begin{corollary}
	Under \Cref{assumption:lowerLevel}, for a generic $f$ and a parametric Morse $g$, the result of \Cref{th:stabilityLocalMin} holds for all local minimizers of the bilevel problem \eqref{eq:original-bilevel-optim}.
	\label{cor:morseParametericStability}
\end{corollary}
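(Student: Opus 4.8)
The plan is to exploit the fact that the perturbation only touches the upper objective: the lower-level branches are frozen by $g$, so I would first record them once and for all, and then run a finite genericity argument over the upper objective.

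\emph{Step 1 (freezing the branches).} Since $g$ is parametric Morse and its critical-point map is locally bounded (\Cref{assumption:constraint-qualification}), \Cref{prop:crit-local-structures} yields finitely many globally defined $C^2$ functions $\fn{1},\dots,\fn{N},\dots,\fn{M}\colon\RR^n\to\RR^m$ with pairwise distinct values, the first $N$ of which describe $\graph\Argminlocg$. These functions depend only on $g$: replacing $f$ by $f_a(x,y):=f(x,y)+\innerproduct{x}{a}$ leaves $\critg$, $\Argminlocg$ and every $\fn{i}$ unchanged, while $f_a(x,\fn{i}(x))=f(x,\fn{i}(x))+\innerproduct{x}{a}$. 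This is the key structural observation that decouples the genericity argument from the lower level.

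\emph{Step 2 (genericity over the upper objective).} For each $i\in\{1,\dots,N\}$ I would apply \Cref{lemma:sard} with $\bar y=\fn{i}$, obtaining a generic set $G_i\subseteq\RR^n$ (open, dense, of full measure) such that $\varphi_i^a\colon x\mapsto f(x,\fn{i}(x))+\innerproduct{x}{a}$ is Morse whenever $a\in G_i$. Since a finite intersection of open dense full-measure sets is again open, dense and of full measure, $G:=\bigcap_{i=1}^N G_i$ is generic. Crucially, finiteness of the family of branches (\Cref{prop:crit-local-structures}) is what keeps the intersection generic.

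\emph{Step 3 (bilevel minimizers are strong).} Fix $a\in G$ and let $(x^\star,y^\star)$ be any local minimizer of the bilevel problem with upper objective $f_a$. Then $y^\star\in\argminloc g_{x^\star}$, so $y^\star=\fn{i}(x^\star)$ for some $i\le N$. Because the graphs $\graph\fn{1},\dots,\graph\fn{N}$ are disjoint (distinct values), the constraint set coincides with $\graph\fn{i}$ in a neighborhood of $(x^\star,y^\star)$, whence $x^\star$ is a local minimizer of $\varphi_i^a$. As $\varphi_i^a$ is Morse, $x^\star$ is a nondegenerate critical point, and being a local minimum this forces $\nabla^2\varphi_i^a(x^\star)\succ 0$, i.e.\ $x^\star$ is a \emph{strong} local minimizer of $\varphi_i^a$. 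Moreover, since $g$ is parametric Morse and $y^\star$ is a local minimizer of $g_{x^\star}$, the Hessian $\nabla^2_{yy}g(x^\star,y^\star)$ is positive definite, so $y^\star$ is a strong local minimizer of $g_{x^\star}$. Thus $(x^\star,y^\star)$ is a strong local minimizer of \eqref{eq:original-bilevel-optim} in the sense introduced before \Cref{th:stabilityLocalMin}, and that theorem (which requires only \Cref{assumption:lowerLevel}, still satisfied by $f_a$) applies verbatim to give the pseudo-stability conclusion.

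\emph{Main obstacle.} The delicate point is Step 3: identifying a bilevel local minimizer with a strong local minimizer of a single scalar branch $\varphi_i^a$. This requires the disjointness/local-graph structure of \Cref{prop:crit-local-structures} to localize the constraint onto one branch, together with the \emph{global} definability of the $\fn{i}$ so that \Cref{lemma:sard}, stated for a globally defined curve, can be invoked branch by branch. The upgrading of ``local minimum $+$ nondegenerate'' to $\succ 0$ in both levels, and the genericity bookkeeping of the finite intersection, are then routine.
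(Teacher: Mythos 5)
Your argument is exactly the paper's intended proof: the paper offers nothing beyond the one-line remark that the corollary follows by combining \Cref{lemma:sard} with \Cref{th:stabilityLocalMin}, and your Steps 1--3 are the natural fleshing-out of that combination (finitely many branches from \Cref{prop:crit-local-structures}, Sard applied to each $\fn{i}$, a finite intersection of generic sets, and the upgrade of a nondegenerate local minimum to a strong local minimizer so that the theorem applies). The only caveat --- one you share with the paper rather than introduce --- is in Step 3: the feasible set of \eqref{eq:original-bilevel-optim} near $(x^\star,y^\star)$ is in general only a \emph{subset} of $\graph \fn{i}$ (the global argmin may switch branches as $x$ varies), so the identification of a bilevel local minimizer with an unconstrained local minimizer of $\varphi_i^a$ is literally valid for the local-minima relaxation \eqref{eq:loc-bilevel-optim}, which is the reading the paper's framework intends.
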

The consequence of \cref{cor:morseParametericStability} is that there are neighborhoods of local minimizers of the original bilevel program \eqref{eq:original-bilevel-optim} which are pseudo-stable for the differentiable programming approximation, minimized by \Cref{algo:diagonal-method}, for large $k$. This means that for very large $k$ if the recursion visits this neighborhood, it will not \TL{necessarily} remain in it for an infinitely long time, but it will stay in the neighborhood for a time which is exponential in $k$. \TL{In particular, the iterates will eventually leave this neighborhood unless this region contains critical points of $\varphi^k$ (indeed, since $\alpha \leq \gamma < 2 / L$ where $L$ is the Lipchitz constant of gradient in the neighborhood), we are in the small step-size regime where the objective function decreases monotonically and any accumulation point of the iterates is a critical point).} 
To appreciate this result, it is important to remark that in \Cref{th:stabilityLocalMin}, the quantities $\gamma$ which bounds step size $\alpha$ which defines the outer recursion, the radius $\delta$, the constants $C$ and $\rho$ do not depend on $k$.

\subsection{Repulsivity of unconstrained critical points}

The main result of this section  states that under favorable assumptions the critical points of $\varphi^k$ can actually be reached under specific and unlikely circumstances.

\begin{theorem}[Escape at infinity and sharpness]
    \label{th:escapeInfinitySharpness}
	Under \Cref{assumption:lowerLevel}, consider $(\firstvar^\star,\secondvar^\star) \in \RR^n \times \RR^m$ satisfying $\nabla f(\firstvar^\star,\secondvar^\star) = 0$ and such that $\nabla^2 f(x^\star,y^\star)$ invertible. Assume $\secondvar^\star$ is not a local minimum of $g_{\firstvar^\star}$ and that $g_{\firstvar^\star}$ is a Morse function. Consider for each $k \in \NN$ the unique $y^{-k} \in \RR^m$ such that $\algo^k(x^\star,y^{-k}) = y^\star$ as given in \Cref{lem:proxInverse}.  
	We have the following alternatives:
    \begin{enumerate}[nosep]
		\item $\lim_{k\to \infty} \|y^{-k}\| = + \infty$.
		\item There exists $C > 0$, $\rho > 1$ and $k_0 \in \NN$, such that: $\|\nabla^2 \varphi^k(\TL{\firstvar^\star},\secondvar^{-k})\|_{\mathrm{op}} \geq C\rho^{2k}$, for all $k \geq k_0$.
    \end{enumerate}
\end{theorem}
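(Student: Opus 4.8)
The plan is to first record the geometric meaning of the sequence $y^{-k}$ and then split the analysis according to whether it stays bounded. By \Cref{lem:proxInverse}, $y^{-k}=(\cA^k(x^\star,\cdot))^{-1}(y^\star)=(\prox_{-\alpha_g g_{x^\star}})^k(y^\star)$, so $y^0=y^\star,y^{-1},y^{-2},\dots$ is exactly the forward orbit of $y^\star$ under the inverse gradient-descent map. Moreover $(x^\star,y^{-k})$ is a critical point of $\varphi^k$ (its image under the diffeomorphism of \Cref{lem:proxInverse} is the critical point $(x^\star,y^\star)$ of $f$), so as in \Cref{prop:equivalence-single-level} the terms carrying the factor $\nabla f(x^\star,y^\star)=0$ drop and $\nabla^2\varphi^k(x^\star,y^{-k})=V_k^\top\nabla^2 f(x^\star,y^\star)V_k$ with $V_k=\begin{pmatrix}\Id&0\\ \jac_x\cA^k & S_k\end{pmatrix}$ and $S_k:=\jac_z\cA^k(x^\star,y^{-k})$. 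Everything then reduces to controlling the cocycle $S_k$ along this backward orbit.

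The dichotomy comes from a monotonicity-plus-coercivity argument. Writing $\cA(x^\star,y^{-(j+1)})=y^{-j}$ and applying the descent lemma to the forward step gives $g_{x^\star}(y^{-j})\le g_{x^\star}(y^{-(j+1)})-(\alpha_g-\tfrac{L_g\alpha_g^2}{2})\|\nabla g_{x^\star}(y^{-(j+1)})\|^2$, so $g_{x^\star}(y^{-k})$ is nondecreasing in $k$. If it diverges, coercivity of $g_{x^\star}$ (\Cref{assumption:lowerLevel}(ii)) forces $\|y^{-k}\|\to\infty$, i.e.\ alternative~(1). Otherwise $g_{x^\star}(y^{-k})$ is bounded; coercivity confines the orbit to a compact sublevel set, the summed increments give $\nabla g_{x^\star}(y^{-k})\to 0$ and $\|y^{-(k+1)}-y^{-k}\|\to 0$, and since $g_{x^\star}$ is Morse its critical points are isolated, so the orbit converges to a single critical point $\bar y$. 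A short argument using the hypothesis rules out $\bar y$ being a local minimum: a local minimum is a repelling fixed point of $\prox_{-\alpha_g g_{x^\star}}$, which a nontrivial forward orbit cannot approach, and if the orbit were eventually equal to $\bar y$ then $y^\star=\cA^k(x^\star,\bar y)=\bar y$ would itself be a local minimum, contradicting the assumption. Hence $\nabla^2_{yy}g(x^\star,\bar y)$ has a negative eigenvalue $\lambda_-<0$.

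In this bounded branch I would produce alternative~(2). Negative curvature at $\bar y$ means $J(\bar y):=\Id-\alpha_g\nabla^2_{yy}g(x^\star,\bar y)$ has an eigenvalue $\rho_1=1-\alpha_g\lambda_->1$, with eigenvector $v_-$. Writing $S_k=\tilde J_{(1)}\cdots\tilde J_{(k)}$ with $\tilde J_{(i)}=\Id-\alpha_g\nabla^2_{yy}g(x^\star,y^{-i})\to J(\bar y)$ (geometrically, by the stable-manifold rate of the convergence $y^{-i}\to\bar y$), the summability of $\sum_i\|\tilde J_{(i)}-J(\bar y)\|$ lets a standard cone/graph-transform estimate propagate the dominant expansion along $v_-$, giving $\|S_k\|_{\mathrm{op}}\ge\|S_k v_-\|\ge c\,\rho_1^{k}$ for large $k$. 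Feeding this into $\nabla^2\varphi^k=V_k^\top\nabla^2 f\,V_k$ and restricting the quadratic form to the $z$-block, $v^\top\nabla^2_{zz}\varphi^k(x^\star,y^{-k})v=(S_k v)^\top\nabla^2_{yy}f(x^\star,y^\star)(S_k v)$, yields the clean rate $\|\nabla^2\varphi^k\|_{\mathrm{op}}\gtrsim\rho_1^{2k}$ whenever the dominant cocycle direction is not annihilated by $\nabla^2_{yy}f$; when it is, invertibility of the full Hessian $\nabla^2 f(x^\star,y^\star)$ keeps $\nabla^2\varphi^k$ congruent to an invertible matrix and forces at least an exponential rate, which suffices because alternative~(2) only asks for \emph{some} $\rho>1$ (one may always rename $\rho\mapsto\sqrt{\rho}$ to absorb a single power).

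The main obstacle is precisely this last conversion. The matrix $V_k$ is exponentially ill-conditioned --- $S_k$ simultaneously expands (rate $\rho_1$) along negative-curvature directions and contracts along positive-curvature ones --- so a crude bound such as $\|V_k^\top A V_k\|_{\mathrm{op}}\ge\sigma_{\min}(A)\,\sigma_{\max}(V_k)^2$ is in fact \emph{false}, and off-diagonal coupling in $\nabla^2 f$ between an expanding and a contracting direction can depress the growth of $S_k^\top\nabla^2_{yy}f\,S_k$ from $\rho_1^{2k}$ down toward $(\rho_1\cdot\text{contraction})^k$. The delicate point is therefore to track the actual singular directions of $S_k$ (which stabilise to a fixed direction $v_\infty$ dictated by the tail of the backward orbit) against the invertible form $\nabla^2 f$, and to pair $(0,v_\infty)$ with a suitable test vector so as to extract a guaranteed exponential factor without paying the full ill-conditioning of $V_k^{-1}$; the freedom in the exponent $2k$ and in the base $\rho$ is what makes this robust.
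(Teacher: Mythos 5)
Your route coincides with the paper's on every structural point: interpret $y^{-k}$ as the forward orbit of $y^\star$ under $\prox_{-\alpha_g g_{x^\star}}$, derive the dichotomy (escape to infinity versus convergence to a critical point $\bar y$ of $g_{x^\star}$), rule out $\bar y$ being a local minimum, and show that the cocycle $S_k=\jacnospace_z\algo^k(x^\star,y^{-k})=\prod_i(\bI-\alpha_g\partialss^2 g(x^\star,y^{-i}))$ expands exponentially along the unstable eigendirection of $\bI-\alpha_g\partialss^2 g(x^\star,\bar y)$. Your backward descent-lemma argument plus Morse isolation of critical points is a clean elementary substitute for the paper's appeal to proximal-point convergence theory, and your summable-perturbation product estimate is equivalent to the paper's $\epsilon$-block recursion for $\|S_k\|\geq c\rho_1^k$. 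Up to there the proposal is sound.

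The gap is the step you flag and then leave open: passing from $\|S_k\|_{\mathrm{op}}\geq c\rho_1^k$ to $\|V_{-k}^\top\nabla^2 f(x^\star,y^\star)V_{-k}\|_{\mathrm{op}}\geq C\rho^{2k}$. ``Congruent to an invertible matrix'' forces nothing about the operator norm: with $V=\mathrm{diag}(t,1/t)$ and $B$ the $2\times 2$ antidiagonal matrix of ones, $V^\top BV=B$ for every $t$, so an exponentially large invertible $V$ and an invertible symmetric $B$ can produce a bounded congruence. When $\nabla^2 f(x^\star,y^\star)$ is indefinite, the quadratic form can vanish on the image of the expanding direction, so the test-vector computation $(S_kv_-)^\top\nabla^2_{yy}f\,(S_kv_-)$ may yield nothing, and your fallback sentence is an acknowledgement of the difficulty rather than a proof; since alternative (2) \emph{is} this estimate, the argument is incomplete. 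For what it is worth, your suspicion is well founded: the paper closes this step with \Cref{prop:operator-bound}, whose proof uses $|v^\top Bv|\geq\lambda_{\min}(B)\|v\|^2$, an inequality that fails for indefinite symmetric $B$ (take $v$ in the null cone of $B$), so that lemma as stated only covers definite $\nabla^2 f(x^\star,y^\star)$. In the indefinite case one must genuinely exploit the block structure of $V_{-k}$ --- in particular its identity upper-left block, which rules out the $\mathrm{diag}(t,1/t)$ pathology and, at least when $m=1$, forces growth of order $\rho_1^k$ in the off-diagonal block --- and neither your proposal nor the paper's proof carries this out in full generality.
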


Before proving \Cref{th:escapeInfinitySharpness}, we explain its implication: \TL{it is difficult to converge to the critical points of $\varphi^k$ that are not related to the original bilevel problem. Indeed, we remind the readers that $(x^\star, y^\star)$ is a critical point of $f$ if and only if $(x^\star, y^{-k})$ is a critical point of $\varphi^k$. If $y^\star$ is not a local minimum of $g_{x^\star}$ (i.e., $(x^\star, y^{-k})$ is a ``fake'' critical point of $\varphi^k$), then \Cref{algo:diagonal-method} (which is equivalent to gradient desent on $\varphi^k$ with step-size $\alpha_f$) either:
\begin{enumerate}
    \item Takes a very long time to reach these ``fake'' critical points since $y^{-k}$ diverges to infinity when $k \to \infty$ (the first possibility in \Cref{th:escapeInfinitySharpness}). 
	\item Converges to a critical point with exponentially large curvature (w.r.t. $k$) (the second possibility in \Cref{th:escapeInfinitySharpness}). This requires either an exponentially small step size (i.e., $\alpha_f = o(1/\rho^{2k})$ for some $\rho > 1$), or constitutes a very unlikely event since in this case the Jacobian is not a contraction in this case (see \textit{e.g.} \cite{shub1987global} or more precisely \cite[Theorem 2.1 or Proposition 3.1]{bolte2025convergence}). Given the usual range of learning rates $\alpha_f$ in practical settings such as deep learning, \Cref{algo:diagonal-method} is unlikely to converge to these points if $k$ is sufficiently large.
\end{enumerate}
}
We will need the following lemma on operator norms.
\begin{lemma}
    \label{prop:operator-bound}
	Given two matrices $A \in \RR^{n \times m}, B \in\RR^{n \times n}$, with $B$ symmetric. We have
    \begin{equation*}
		\|A^\top B A\|_\mathrm{op} \geq \|A\|_\mathrm{op}^2\lambda_{\text{min}}(B).
    \end{equation*}
    where $\lambda_{\text{min}}(B) > 0$ is the smallest absolute value of eigenvalues of $B$.
\end{lemma}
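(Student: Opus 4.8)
The plan is to reduce the operator-norm lower bound to a single well-chosen test vector, exploiting that $A^\top B A$ is symmetric. Since $B = B^\top$ we have $(A^\top B A)^\top = A^\top B A$, and the operator norm of a symmetric matrix is realised by its Rayleigh quotient, so that $\|A^\top B A\|_{\mathrm{op}} = \sup_{\|v\| = 1} |v^\top A^\top B A\, v| = \sup_{\|v\|=1} |(Av)^\top B (Av)|$. It therefore suffices to produce one unit vector $v$ for which $|(Av)^\top B (Av)|$ is at least $\|A\|_{\mathrm{op}}^2 \,\lambda_{\min}(B)$.

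First I would take $v = v_1$, a top right singular vector of $A$: writing the singular value decomposition $A = \sum_i \sigma_i u_i v_i^\top$ with $\sigma_1 = \|A\|_{\mathrm{op}}$, one has $A v_1 = \sigma_1 u_1$ where $u_1$ is a unit vector. Substituting gives $\|A^\top B A\|_{\mathrm{op}} \geq |(Av_1)^\top B (A v_1)| = \sigma_1^2\,|u_1^\top B u_1| = \|A\|_{\mathrm{op}}^2\,|u_1^\top B u_1|$. The whole inequality is thus reduced to the scalar Rayleigh estimate $|u_1^\top B u_1| \geq \lambda_{\min}(B)$ for the unit vector $u_1$.

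To control this last quantity I would diagonalise $B$ in an orthonormal eigenbasis $\{w_j\}$ with eigenvalues $\mu_j$, so that $u_1^\top B u_1 = \sum_j \mu_j \langle u_1, w_j\rangle^2$ while $\sum_j \langle u_1, w_j\rangle^2 = \|u_1\|^2 = 1$. When all the $\mu_j$ share a common sign, the absolute value can be taken inside the sum without cancellation, yielding $|u_1^\top B u_1| = \sum_j |\mu_j|\, \langle u_1, w_j\rangle^2 \geq \big(\min_j |\mu_j|\big) \sum_j \langle u_1, w_j\rangle^2 = \lambda_{\min}(B)$, which closes the argument.

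The main obstacle is precisely this final step: for an indefinite $B$ the signed sum $\sum_j \mu_j \langle u_1, w_j\rangle^2$ can exhibit cancellation and even vanish, so $|u_1^\top B u_1|$ may fall below $\lambda_{\min}(B)$, and the bound can only be guaranteed once one knows that the dominant singular direction $u_1$ of $A$ concentrates on a single sign-class of eigenvectors of $B$. The clean collapse above is therefore licensed by the sign-definiteness of $B$ (or, more weakly, by such an alignment of $u_1$); the remaining SVD and eigenbasis bookkeeping is routine, and identifying the structural reason that $u_1$ meets a definite block of $B$ in the intended application is the one point that genuinely needs care.
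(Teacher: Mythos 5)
Your argument is essentially the paper's own: the paper's proof consists of the single assertion that $|x^\top A^\top B A x| \geq \lambda_{\min}(B)\,\|Ax\|_2^2$ for every $x$, followed by maximizing over unit vectors $x$ — which is the same computation as your reduction to the top right singular vector of $A$ and the scalar Rayleigh estimate $|u_1^\top B u_1| \geq \lambda_{\min}(B)$. The obstacle you isolate at the end is therefore not a defect of your write-up but a genuine error in the lemma and in the paper's proof of it. Since $\lambda_{\min}(B)$ is explicitly defined as the smallest \emph{absolute value} of the eigenvalues, indefinite $B$ is meant to be covered, and then the pointwise inequality the paper asserts is false for exactly the cancellation reason you describe. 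Concretely, with $n=2$, $m=1$, $B = \mathrm{diag}(1,-1)$ and $A = \tfrac{1}{\sqrt{2}}(1,1)^\top$, one has $\|A\|_{\mathrm{op}} = 1$ and $\lambda_{\min}(B) = 1$, yet $A^\top B A = 0$, so the stated inequality fails. Your remark that the argument closes when $B$ is sign-definite (or when $u_1$ lies in a sign-definite eigenspace of $B$) correctly identifies the only regimes in which the bound holds.

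This is not a cosmetic issue for the paper: \Cref{prop:operator-bound} is invoked in the proof of \Cref{th:escapeInfinitySharpness} with $B = \nabla^2 f(x^\star,y^\star)$, which is assumed invertible but not definite, and $A = V_{-k}$. The ``structural reason that $u_1$ meets a definite block of $B$'' which you flag as the point needing care is precisely the ingredient that neither the lemma nor the theorem's proof supplies. In short: you took the same route as the paper, but you have correctly located a step that fails as stated, and no amount of SVD bookkeeping can repair it without an additional definiteness or alignment hypothesis.
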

\begin{proof}
	We have for any $x$, $\|x^\top A^\top B A x\| \geq \lambda_{\text{min}}(B) \|A x\|_2^2$,
	so that 
    \begin{align*}
		\|A^\top B A\|_\mathrm{op}  = \max_{\|x\|=1} \|x^\top A^\top B A x\| \geq \max_{\|x\|=1}  \lambda_{\text{min}}(B) \|A x\|_2^2 = \|A\|_\mathrm{op}^2\lambda_{\text{min}}(B). 
    \end{align*}
\end{proof}

\begin{proof}[Proof of \Cref{th:escapeInfinitySharpness}]
	Thanks to \Cref{lem:proxInverse}, one can interpret the sequence $\{y^{-k}\}_{k \in \NN}$ as a proximal point sequence to minimize $- g(\firstvar^\star, \cdot)$, that is, to maximize $g(x^\star,\cdot)$, initialized at $\secondvar^\star$. By \cite{attouch2013Convergence}, there are exactly two alternatives:
    \begin{enumerate}
		\item $\lim_{k\to \infty} \|y^{-k}\| = + \infty$.
		\item $\lim_{k\to \infty} y^{-k} = \bar{y}$ where $\bar{y}$ is a critical point of $g(x^\star,\cdot)$. 
    \end{enumerate}
	Let us consider the second case. 
	The point $\bar{y}$ is not a local minimizer of $g(\firstvar^\star,\cdot)$. Indeed, if $\bar{y}$ is a local minimizer of $g(\firstvar^\star,\cdot)$, since $-\alpha_g g(x^\star,\cdot) + \frac{1}{2}\|y^\star-\cdot\|^2$ attains its global minimum at $y^\star$  (because $\alpha_g L_g<1$), the proximal iterates are therefore stationary so that $y^\star = \bar{y}$ is also a local minimizer of $g(x^\star,\cdot)$. This last case is ruled out by our hypotheses.

	We recall that from the proof of \Cref{prop:equivalence-single-level}, since $\nabla f(\firstvar^\star,\secondvar^\star) = 0$, we have 
	\begin{align*}
		\nabla^2 \varphi^k(x^\star,y^{-k})
		=
		V_{-k}^\top 
		\nabla^2 f(x^\star, \TL{y^\star}) 
		V_{-k}
	\end{align*}
    where
    \begin{align*}
        V_{-k} = \begin{pmatrix}
			I & 0 \\
			\jacnospace_x \algo^k(x^\star,y^{-k}) & \jacnospace_z \algo^k(x^\star,y^{-k})
		\end{pmatrix}.
    \end{align*}
	We are going to show that, for large $k$, $\jacnospace_z \algo^k(x^\star,y^{-k})$ has an exponentially large operator norm and the result will follow from \Cref{prop:operator-bound}. We recall from the proof of \Cref{lemma:local-convexity} the following
    \begin{equation*}
		\partial_{\thirdvar} \algo^k(\firstvar^\star,y^{-k}) = \prod_{i=1}^{k} \underbrace{\left(\bI - \alpha_g \partialss^2 g(\firstvar^\star,y^{-i})\right)}_{\bH_i},
    \end{equation*}
    
    Let $\bH_{a,b} = \prod_{i = a}^b \bH_i$, it is sufficient to prove that $\|\bH_{k_0,k}\| \geq K\rho^{k-k_0}$ for some $\rho > 1, K > 0$ and $k_0 \in \NN$. It is true because $\bH_i$ are symmetric, positive definite with $\lambda_{\min}(\bH_i) \geq 1 - \TL{\alpha_g}L_g > 0$ (cf. \Cref{assumption:lowerLevel}) for all $i \in \NN$. 

    Since $\partialss^2 g(\firstvar^\star,\bar{\secondvar})$ is symmetric, there exists an orthogonal matrix $\bP$ such that $\bP^\top \partialss^2 g(\firstvar^\star,\bar{\secondvar}) \bP$ is diagonal. We conclude that the same matrix $\bP$ also makes:
    \begin{equation*}
        \bD = \bP^\top (\bI - \alpha_g \partialss^2 g(\firstvar^\star,\bar{\secondvar}))\bP
    \end{equation*}
    diagonal. Moreover, due to the Morse assumption of $g(\firstvar^\star, \cdot)$ and its Lipchitz gradient
    (cf. \Cref{assumption:lowerLevel}), the matrix $\bD$ contains two sets of eigenvalues: $\{\lambda \mid \lambda > 1\}$ and $\{\lambda \mid 0 < \lambda < 1\}$. Without loss of generality, we can suppose that:
    \begin{equation*}
        \bD = \begin{pmatrix}
            \bD_1 & \mbf{0}\\
            \mbf{0} & \bD_2
        \end{pmatrix},
    \end{equation*}
    where $\bD_1$ (resp. $\bD_2$) contains only positive elements that are bigger (resp. smaller) than $1$. Note that the size of $\bD_1$ is at least one because $\partialss^2 g(\firstvar^\star,\bar{\secondvar})$ has at least one negative eigenvalue.
    
    Since $\lim_{k \to \infty} \secondvar^{-k} = \bar{\secondvar}$ and $g$ is $C^2$ (cf. \Cref{assumption:lowerLevel}), for any $\epsilon > 0$, one can choose $k_0 \in \NN$ such that for $\ell \geq k_0$, one can write: $\partialss^2 g(\firstvar^\star,\secondvar^{-\ell}) = \partialss^2 g(\firstvar^\star,\bar{\secondvar}) + \bE_{\ell}$ where $\|\bE_{\ell}\| < \epsilon / \alpha_g$. Thus, 
    \begin{equation*}
        \bP^\top (\bI - \alpha_g \partialss^2 g(\firstvar^\star,\secondvar^{-\ell}))\bP = \bP^\top (\bI - \alpha_g \partialss^2 g(\firstvar^\star,\bar{\secondvar}) - \alpha_g \bE_\ell)\bP = \bD - \underbrace{\alpha_g\bP^\top\bE_\ell\bP}_{\bE_\ell'}.
    \end{equation*}
    Since the matrix $\bP$ is orthogonal, $\|\bE_\ell'\| = \alpha_g\|\bE_\ell\| \leq \epsilon$. Thus,
    \begin{equation*}
        \begin{aligned}
            \|\bH_{k_0,k}\| &= \left\|\prod_{\ell = k_0}^k \left(\bI - \alpha_g \partialss^2 g(\firstvar^\star,\secondvar^{-\ell})\right)\right\|\\
            &= \left\|\bP^\top\left[\prod_{\ell = k_0}^k \left(\bI - \alpha_g \partialss^2 g(\firstvar^\star,\secondvar^{-\ell})\right)\right] \bP\right\|\\
            &= \left\|\prod_{\ell = k_0}^k \bP^\top\left(\bI - \alpha_g \partialss^2 g(\firstvar^\star,\secondvar^{-\ell})\right)\bP \right\|\\
            &= \left\|\underbrace{\prod_{\ell = k_0}^{k} (\bD + \bE_{\ell}')}_{\bH}\right\|\\
        \end{aligned}
    \end{equation*}
    In the following, WLOG, we will assume $k_0 = 1$ and prove $\|\bH_{k_0,k}\| = \|\bH\| \geq K\rho^{k}$. 
    
    Consider the product between $\bH$ and $e_1$ - the first canonical basis in $\RR^n$. Define the sequence of pairs of vectors $\{(u_n, v_n)\}$ whose sizes are equal to the number of elements bigger than and smaller than $1$ of $\bD$ respectively as follows:
    \begin{equation*}
        \begin{pmatrix}
            u_0 \\
            v_0
        \end{pmatrix} = e_1, \qquad \begin{pmatrix}
            u_{\ell+1} \\ v_{\ell+1}
        \end{pmatrix} = \left(\bD + \bE'_{\ell}\right)\begin{pmatrix}
            u_\ell \\ v_\ell
        \end{pmatrix}, \forall \ell \geq 0.
    \end{equation*}
    In particular, we have:
    \begin{equation*}
        \bH e_1 = \begin{pmatrix}
            u_{k} \\ v_k
        \end{pmatrix}
    \end{equation*}
    Moreover, if one write $\bE'_\ell$ as a block matrix:
    \begin{equation*}
        \bE_\ell' = \begin{pmatrix}
            \bE^{1,1}_\ell & \bE^{1,2}_\ell \\
            \bE^{2,1}_\ell & \bE^{2,2}_\ell
        \end{pmatrix}
    \end{equation*}
    we get the following relation:
    \begin{equation*}
        \begin{aligned}
            u_{\ell+1} &= (\bD_1 + \bE^{1,1}_\ell) u_{\ell} + \bE^{1,2}_\ell v_\ell\\ 
            v_{\ell+1} &= \bE^{2,1}_\ell u_{\ell} + (\bD_2 + \bE^{2,2}_\ell) v_\ell
        \end{aligned}
    \end{equation*}
    Denote $\tau_1 > 1 > \tau_2$ the smallest (resp. the largest) non-zero elements of $\bD_1$ and $\bD_2$, we obtain an estimation on the norm of $u_\ell, v_\ell$, using the fact that $\|\bE_\ell^{i,j}\| \leq \|\bE_\ell'\| \leq \epsilon, \forall i, j \in \{1,2\}$:
    \begin{equation*}
        \begin{aligned}
            \|u_{\ell+1}\| &\geq (\tau_1 - \epsilon) \|u_{\ell}\| - \epsilon \|v_\ell\|\\ 
            \|v_{\ell+1}\| &\leq \epsilon \|u_\ell\| + (\tau_2 + \epsilon)\|v_\ell\|
        \end{aligned}
    \end{equation*}
    Therefore, if $\epsilon$ is small enough so that $\tau_1 - \epsilon > 1 > \tau_2 + \epsilon$, we get:
    \begin{equation*}
        \begin{aligned}
            \|u_{\ell+1}\| &\geq (\tau_1 - \epsilon) \|u_{\ell}\| - \epsilon \|v_\ell\|\\
            &\geq (\tau_1 - \epsilon) \|u_{\ell}\| - \epsilon^2 \|u_{\ell - 1}\| - \epsilon (\epsilon + \tau_2) \|v_{\ell-1}\|\\
            &\geq (\tau_1 - \epsilon) \|u_{\ell}\| - \epsilon^2 \|u_{\ell - 1}\| - \epsilon^2 (\epsilon + \tau_2) \|u_{\ell-2}\| - \epsilon (\epsilon + \tau_2)^2 \|v_{\ell-2}\|\\
            &\ldots \\
            &\geq (\tau_1 - \epsilon) \|u_{\ell}\| - \epsilon^2\left(\sum_{j = 0}^{\ell - 1} (\tau_2 + \epsilon)^{\ell - 1 - j} \|u_j\|\right) \\
            &\geq (\tau_1 - \epsilon)\|u_\ell\| - \frac{\epsilon^2}{1 - \tau_2 - \epsilon} \max_{j = 0, \ldots, \ell - 1} \|u_{j}\|
        \end{aligned}
    \end{equation*}
    For $\epsilon$ sufficient small so that $\rho := \tau_1 - \epsilon - \epsilon^2 / (1 - \tau_1 -\epsilon) > 1$, we get $\|u_{\ell}\| \geq \rho^\ell, \forall \ell \leq 0$. Hence, $\|\bH_{k_0,k}\| = \|\bH\| \geq \|u_k\| \geq \rho^k$, and the result follows.
\end{proof}

\begin{corollary}
	Under \Cref{assumption:lowerLevel}, for a generic $f$ and $g$, the result of \Cref{th:escapeInfinitySharpness} holds for all critical points of the single level problem \eqref{eq:singleLevel}.
	\label{cor:instability}
\end{corollary}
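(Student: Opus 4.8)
The plan is to realize the two non-automatic hypotheses of \Cref{th:escapeInfinitySharpness} --- that $\nabla^2 f(x^\star,y^\star)$ is invertible and that $g_{x^\star}$ is Morse --- as generic properties of the linearly perturbed pair $\tilde f = f + \innerproduct{a}{\cdot}$ and $\tilde g = g + \innerproduct{b}{y}$, with parameter $(a,b)\in\RR^{n+m}\times\RR^m$. First I would note that $\tilde g$ still satisfies \Cref{assumption:lowerLevel}: a linear term in $y$ leaves $\nabla^2_{yy} g$ (hence $L_g$) untouched and preserves coercivity and semi-algebraicity. The invertibility of $\nabla^2\tilde f$ at all critical points is exactly the statement that $\tilde f$ is Morse, which is generic by the Morse--Sard argument of \Cref{lemma:sard}: with $D' = \{(x,y)\mid \det\nabla^2 f(x,y)=0\}$ and $\Psi'(x,y)=-\nabla f(x,y)$, the parameter $a$ fails this iff $a\in\Psi'(D')$, and $\Psi'(D')$ is the set of critical values of $\nabla f$, of measure zero by Sard and, being semi-algebraic (Tarski--Seidenberg), of dimension at most $n+m-1$.

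The difficulty is the coupling: the points $x^\star$ at which I need $\tilde g_{x^\star}$ Morse are the $x$-coordinates of the critical points of $\tilde f$, which move with $a$. Imposing parametric Morse uniformly in $x$ is not allowed, since that property is not dense (\Cref{ex:morseParamNongeneric}); only its piecewise version is generic (\Cref{prop:generic-ae-morse-parametric}), and a priori the finitely many $x^\star$ need not avoid the exceptional set. I resolve this by a single dimension count in the joint parameter. The property ``$\tilde g_{x^\star}$ Morse at every critical point of $\tilde f$'' fails precisely when there is $(x,y,y')$ with $\nabla f(x,y)+a=0$, $\nabla_y g(x,y')+b=0$ and $\det\nabla^2_{yy} g(x,y')=0$. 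Setting $D=\{(x,y,y')\mid \det\nabla^2_{yy} g(x,y')=0\}$ and $\Psi(x,y,y')=(-\nabla f(x,y),\,-\nabla_y g(x,y'))$, this bad event occurs for $(a,b)$ exactly when $(a,b)\in\Psi(D)$.

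The key estimate is $\dim\Psi(D)\le n+2m-1$, which I would get by factoring $\Psi$ through $\tilde A=\{(x,b)\mid b=-\nabla_y g(x,y') \text{ for some } y' \text{ with } \det\nabla^2_{yy} g(x,y')=0\}$. For each fixed $x$ the fiber $\tilde A(x)$ is the set of critical values of $\nabla_y g_x$, of measure zero by Sard, so by Fubini--Tonelli $\tilde A$ has measure zero and, being semi-algebraic, $\dim\tilde A\le n+m-1$ --- this is verbatim the computation of \Cref{prop:generic-ae-morse-parametric}. Since $\Psi(D)$ is the image of $\tilde A\times\RR^m$ under the semi-algebraic map $((x,b),y)\mapsto(-\nabla f(x,y),b)$, one has $\dim\Psi(D)\le\dim\tilde A+m\le n+2m-1$. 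Hence the bad set $(\Psi'(D')\times\RR^m)\cup\Psi(D)$ is semi-algebraic of dimension $<n+2m$, so its complement is open, dense and of full measure; this is the good set of $(a,b)$.

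For any good $(a,b)$, every critical point $(x^\star,y^\star)$ of \eqref{eq:singleLevel} for $(\tilde f,\tilde g)$ has $\nabla^2\tilde f(x^\star,y^\star)$ invertible and $\tilde g_{x^\star}$ Morse, so the two structural hypotheses of \Cref{th:escapeInfinitySharpness} hold there; its remaining hypothesis, that $y^\star$ be \emph{not} a local minimum of $\tilde g_{x^\star}$, is exactly the condition isolating the ``fake'' critical points, and wherever it holds the dichotomy of \Cref{th:escapeInfinitySharpness} applies directly. The main obstacle --- and the only genuinely new ingredient --- is the joint dimension count $\dim\Psi(D)<n+2m$ of the third paragraph; once it is in hand, the remainder is bookkeeping with the genericity conventions of this section together with Sard's theorem.
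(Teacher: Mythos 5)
The paper states \Cref{cor:instability} without any proof, so there is nothing to compare line by line; your proposal has to be judged against the genericity toolkit the paper sets up (\Cref{lemma:sard}, \Cref{prop:generic-ae-morse-parametric}) and it holds up. Your argument is correct, and you have correctly identified the one point that is \emph{not} a routine application of those results: genericity of the Morse property for $\tilde f$ and piecewise-parametric-Morseness of $\tilde g$, taken separately, do not suffice, because the finitely many critical abscissae $x^\star$ of $\tilde f$ could a priori land in the measure-zero exceptional set of $\tilde g$. Your joint dimension count resolves this cleanly: the factorization of $\Psi(D)$ through $\tilde A\times\RR^m$ is exactly the computation of \Cref{prop:generic-ae-morse-parametric} augmented by the fact that semi-algebraic images do not raise dimension, and it yields the bad set as a semi-algebraic subset of $\RR^{n+2m}$ of dimension at most $n+2m-1$, whose complement is open, dense and of full measure --- precisely the paper's notion of genericity. (A simpler but weaker route would be to fix a generic $a$, note that $\tilde f$ then has finitely many critical points, and apply Sard to $\nabla_y g_{x^\star_i}$ for each of them plus Fubini; your version is preferable because it directly produces the semi-algebraic open dense set.) Your reading of the ``not a local minimum'' hypothesis as the selector of the relevant (fake) critical points, rather than something to be made generic, is also the right one.

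One minor caveat: the claim that a linear perturbation $\langle b,y\rangle$ preserves coercivity of $g_x$ is false in general (e.g.\ $y\mapsto\sqrt{1+y^2}-y$ is bounded). This does not damage the argument --- it is an issue the paper itself glosses over in \Cref{prop:generic-ae-morse-parametric}, and it is repaired either by reading ``generic $f$ and $g$'' as quantifying only over perturbations for which \Cref{assumption:lowerLevel} continues to hold, or by restricting $b$ to a small ball when $g_x$ has a uniform superlinear growth margin --- but the sentence as written should be qualified.
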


\TL{
We finish this section with another corollary, which is an application of \Cref{th:escapeInfinitySharpness} when we have in addition the parametric Morse property (cf. \Cref{def:parametric_morse}) for $g$.
\begin{corollary}
    Consider bilevel optimization where \Cref{assumption:lowerLevel} holds, $g$ is parametric Morse and $f$ is a generic function. We have: for any bounded set $K \subseteq \RR^n \times \RR^m$, there exists $k_0 \in \NN$ such that for any $k \geq k_0$, there are three possibilities regards a critical point $(x,y)$ of $\varphi^k$:
    \begin{enumerate}
        \item $\cA^k(x,y)$ is a local minimum of $g(x,\cdot)$.
        \item $(x,y) \notin K$.
        \item $\|\nabla^2 \varphi^k(x,y)\| \geq C\rho^{2k}$ for some constants $C > 0, \rho > 1$. In addition, there exists a neighborhood $U$ of $(x,y)$ where $\|\nabla^2 \varphi^k(\tilde{x},\tilde{y})\| \geq C\rho^{2k} / 2$ for all $(\tilde{x},\tilde{y}) \in U$.
    \end{enumerate}
\end{corollary}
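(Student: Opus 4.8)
The plan is to reduce the trichotomy to a pointwise application of \Cref{th:escapeInfinitySharpness} at each critical point of the single-level objective $f$, and then to upgrade its pointwise dichotomy into a \emph{uniform} statement by exploiting that, generically, $f$ has only finitely many critical points. The first step is to record the structural facts that make everything finite. Since $f$ is generic (in the sense of \Cref{lemma:sard}, \Cref{cor:instability}) we may assume $f$ is Morse, so every critical point has invertible Hessian; being moreover semi-algebraic by \Cref{assumption:lowerLevel}, its critical set $\{\nabla f = 0\}$ is semi-algebraic, and Morseness forces it to be discrete. A discrete semi-algebraic set has singleton connected components, hence is \emph{finite}. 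I would thus fix the critical points $(x^\star_1,w^\star_1),\dots,(x^\star_p,w^\star_p)$ of $f$, each with $\nabla^2 f(x^\star_j,w^\star_j)$ invertible.

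Next I would transport these to $\varphi^k$. By \Cref{prop:equivalence-single-level} together with \Cref{lem:proxInverse}, the map $T_k\colon(x,z)\mapsto(x,\cA^k(x,z))$ is a global diffeomorphism and $\nabla\varphi^k(x,z)=0$ iff $\nabla f(x,\cA^k(x,z))=0$; hence for each $k$ the critical set of $\varphi^k$ is exactly $\{(x^\star_j,y^{-k}_j)\}_{j=1}^p$, where $y^{-k}_j$ is the unique preimage with $\cA^k(x^\star_j,y^{-k}_j)=w^\star_j$. Any critical point $(x,y)$ of $\varphi^k$ therefore equals some $(x^\star_j,y^{-k}_j)$. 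Since $g$ is parametric Morse (\Cref{def:parametric_morse}), each $g_{x^\star_j}$ is Morse, so $w^\star_j$ is a nondegenerate critical point: either a local minimum ($\nabla^2_{yy}g\succ 0$), giving the index set $J_1$, or a strict saddle, giving $J_2$. For $j\in J_1$ we have $\cA^k(x^\star_j,y^{-k}_j)=w^\star_j$ a local minimum of $g_{x}=g_{x^\star_j}$, which is possibility (1). For $j\in J_2$ the hypotheses of \Cref{th:escapeInfinitySharpness} hold at $(x^\star,y^\star)=(x^\star_j,w^\star_j)$, and its preimage sequence is precisely $y^{-k}_j$; the theorem then returns, for each such $j$, either $\lim_k\|y^{-k}_j\|=+\infty$ or $\|\nabla^2\varphi^k(x^\star_j,y^{-k}_j)\|_{\mathrm{op}}\geq C_j\rho_j^{2k}$ for $k\geq k_{0,j}$ with $\rho_j>1$.

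Finally I would uniformize over the finitely many indices. For an escaping index $j\in J_2$, boundedness of $K$ forces $(x^\star_j,y^{-k}_j)\notin K$ once $\|y^{-k}_j\|$ exceeds the radius of $K$, which holds for all $k$ beyond some $k_{1,j}$; this is possibility (2). Setting $C:=\min_{j\in J_2}C_j>0$, $\rho:=\min_{j\in J_2}\rho_j>1$, and $k_0:=\max_j(k_{0,j},k_{1,j})$ (all finite maxima), every critical point of $\varphi^k$ with $k\geq k_0$ satisfies one of (1), (2), (3). For the ``in addition'' clause of (3), I would invoke continuity of $\nabla^2\varphi^k$ — which is continuous because $f$ is $C^2$ and $\cA^k$ is $C^2$ (as $g$ is $C^3$) — so at a point where $\|\nabla^2\varphi^k\|_{\mathrm{op}}\geq C\rho^{2k}$ there is a (possibly $k$-dependent) neighborhood $U$ on which the norm stays $\geq C\rho^{2k}/2$.

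The main obstacle is producing constants $C,\rho,k_0$ valid for all critical points \emph{simultaneously}: \Cref{th:escapeInfinitySharpness} is genuinely pointwise, with constants a priori depending on the critical point, so a naive union could yield $\inf_j C_j=0$ or $\sup_j\rho_j\to 1$ and no common $k_0$. The resolution is the finiteness of $\crit f$, coming from the Morse-plus-semi-algebraic combination; this is the step that genuinely uses genericity of $f$ (beyond the pointwise Morse condition) and must be argued with care, since without it the merged trichotomy would fail. The remaining work is routine bookkeeping of the quantifiers on $k$ and of the reduction through $T_k$.
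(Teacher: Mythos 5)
Your proof is correct and follows the same basic route as the paper: reduce every critical point of $\varphi^k$ to a critical point of $f$ via the diffeomorphism of \Cref{lem:proxInverse} and \Cref{prop:equivalence-single-level}, invoke \Cref{th:escapeInfinitySharpness} pointwise at each such critical point (parametric Morse supplying the Morse hypothesis on $g_{x^\star}$, genericity supplying the invertible Hessian of $f$), and obtain the neighborhood in item (3) from continuity of $\nabla^2\varphi^k$. The one place where you go beyond the paper is the uniformization step: the paper's proof simply states that the three possibilities ``follow correspondingly'' from \Cref{th:escapeInfinitySharpness}, whereas you correctly observe that the uniform $k_0$ in the statement requires merging the per-critical-point constants and escape times, and you supply the needed ingredient --- genericity makes $f$ Morse, and a Morse semi-algebraic function has a discrete, hence finite, critical set, so finite maxima and minima of the constants suffice. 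This is a legitimate sharpening of the paper's one-line argument rather than a different proof, and nothing in your write-up is in error.
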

\begin{proof}
    This is just an application of \Cref{th:escapeInfinitySharpness}. Indeed, the fact that $g$ is parametric Morse ensures the condition $g_x$ being Morse as in \Cref{th:escapeInfinitySharpness}. Three possibilities then follow correspondingly. Lastly, the neighborhood $U$ in the last claim exists because $\varphi^k$ is a $C^2$ function (remember that $g$ and $f$ are $C^2$ and $C^3$ respectively as assumed in \Cref{assumption:lowerLevel}). 
\end{proof}
}

\subsection{Illustrations of theoretical findings}
\label{sec:illustration}
We illustrate the findings of this section in \Cref{fig:numericalIllustration,fig:numericalIllustration2} with a simple example. Let $h \colon \RR \to \RR$ denotes the Huber loss, $h(t) = t^2/2$ for $t \in [-1,1]$ and $h(t) = |t| -1/2$ for $|t| > 1$. This function is $C^1$ and has vanishing curvature outside $[-1,1]$ which allows us to compose it with a square while preserving the global Lipschitz-continuity of the gradient. The following examples do not contradict \cite[Theorem 3.2] {Franceschi2018BilevelPF}  which requires both uniform convergence of the lower level recursion to a unique argmin, and a bounded set of initialization. 

\paragraph{Illustrating instabilities:} We consider the bilevel problem as in \eqref{eq:original-bilevel-optim} with 
\begin{equation}
    \label{eq:example-bilevel-optim}
    \begin{aligned}
        \underset{\firstvar \in \RR^n,\secondvar \in \RR}{\min} & \quad f(\firstvar,\secondvar):= (\secondvar - 0.5)^2\qquad \qquad
        \text{s.t.} &\quad \secondvar \in  {\argmin_\thirdvar}\; g(\firstvar,\thirdvar) := h(z^2-1)
    \end{aligned}
\end{equation}
where the two functions
$f(x,y) = (y - 0.5)^2$ and $g(x,y) = h(y^2-1)$, are displayed in \Cref{fig:numericalIllustration}. This is a degenerate problem as there is no dependency in $x$, it corresponds to the following trivial problem, yet, it illustrates our results very well.  
\begin{equation}
    \begin{aligned}
        \underset{\secondvar \in \RR}{\min} & \quad  (\secondvar - 0.5)^2\qquad\qquad \qquad
        \text{s.t.} &\quad \secondvar = \pm 1.
    \end{aligned}
\end{equation}

The inner gradient descent recursion in \eqref{eq:algorithmic-bilevel} is attracted by $\pm 1$, with a smooth dependency on the initialization around $\pm 1$, and a sharp transition around $0$, the local maximizer of $g$. The higher the value of $k$, the steeper this transition. As a consequence, when composing $f$ and $\algo^k$, the landscape of $f$ is completely modified. It has two very flat regions where the derivative almost vanishes in a large neighborhood around $\pm 1$, and a sharp global minimum, corresponding to the global minimum of $f$. These are illustrated in \Cref{fig:numericalIllustration} where we represent $\algo^k$ and $f \circ \algo^k$ for $k = 9$. The larger $k$, the flatter the flat areas, and the sharper the global minimizer. 
\begin{figure}[H]
	\centering
\includegraphics[width=\textwidth]{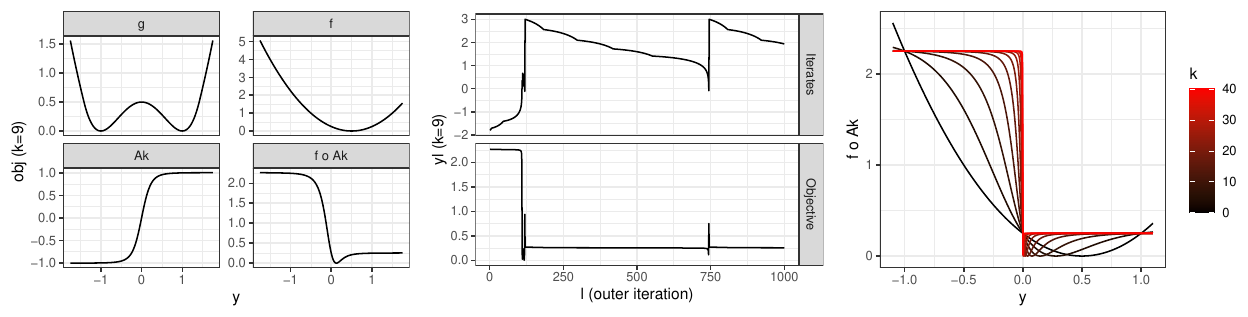}
	\caption{Illustration of the instability phenomenon of the diagonal method. Left: inner objective $g$ and outer objective $f$, with the outcome of $k=9$ gradient steps on $g$, depending on the initialization $\algo^k(z)$ and the corresponding approximation $\varphi^k(z) = f(\algo^k(z))$. Middle: {values of $y_l$ and $\varphi^k(y_l)$} along outer iterations. Many iterations have objective values corresponding to solutions of the bilevel problem \eqref{eq:original-bilevel-optim} but tends to be attracted by the sharp global minimizer of $f$, which is repulsive for \Cref{algo:diagonal-method}. Right: profile of $\varphi^k = f \circ \algo^k$ for different values of $k$. {The \emph{sharp} minimizer of $\varphi^k$ at $0$ corresponds to the second situation of \Cref{th:escapeInfinitySharpness}.}}
	\label{fig:numericalIllustration}
\end{figure}
Now running the gradient recursion on $f \circ \algo^k$ as in \Cref{algo:diagonal-method}, we obtain the following behavior, illustrated in \Cref{fig:numericalIllustration}, middle. The recursion spends most of its iterations around the large flat areas. It does not stay there however since they do not contain stationary points, it is attracted by the sharp minimizer which is a repulsive point of the recursion because of the high curvature. If one increases the value of $k$, the behavior remains qualitatively similar, with flatter and more curved areas as illustrated in \Cref{fig:numericalIllustration} right. Actually, in this case, $\varphi^k$ converges pointwise to $f(-1) = 2.25$ for $y<0$, $f(1) = 0.25$ for $y>0$ and $f(0) = 0.25$ for $y=0$. However, $\min \varphi^k = 0$ for all $k$ and the argmin converges to $0$, the local maximum of $g$. This is an example of pointwise, but non uniform convergence, and the limit is indeed discontinuous.
\begin{figure}[H]
	\centering
	\includegraphics[width=\textwidth]{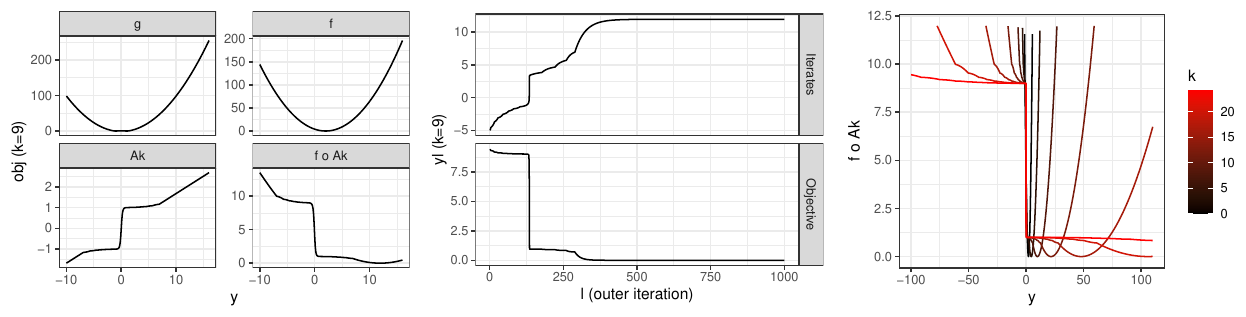}
	\caption{Same as \Cref{fig:numericalIllustration} with $f(x,y)= (y - 2)^2$. We see that the recursion spends time with value $\varphi^k(y_l) \sim f(-1)$ and then converges to a point corresponding the global minimum of $f$. The specificity of this setting is that when the inner iteration counter $k$ is increasing, the corresponding {minimizer} of $\varphi^k = f \circ \algo^k$ is pushed to infinity, {which corresponds to the first situation of \Cref{th:escapeInfinitySharpness}. Note that the functions $g$ and $\algo^k$ are the same as in \Cref{fig:numericalIllustration}. Since we are illustrating the escape at infinity, the abscissa is much larger so that the variations around the origin, visible in \Cref{fig:numericalIllustration} are squeezed here.}
    }
	\label{fig:numericalIllustration2}
\end{figure}

\paragraph{Illustrating escape at infinity:} We consider the same experiment with $f(x,y)= (y - 2)^2$ instead. This time the global minimizer of $f$ is $2$ which {lies to the right of all critical points of} $g$. The inner gradient descent steps are still attracted by $\pm 1$. But for a given $k$, it is possible to find $\algo^k(y) \simeq 2$ by choosing $y$ large enough. For large $k$, the corresponding initialization $y$ diverges. Furthermore, this corresponds to a region of positive curvature for $g$ and it does not generate the sharp minimizer behavior for $f \circ \algo^k$, which we had previously. It modifies the landscape of $f$, pushing smoothly the global minimizer to infinity. In this case, the gradient descent algorithm applied to $f \circ \algo^k$ finds the global minimum of $f$, ignoring the bilevel constraints as illustrated in \Cref{fig:numericalIllustration2}. In this case $\varphi^k$ also converges pointwise to a discontinuous function.

\section*{Acknowledgements}
JB, TL, EP thank AI Interdisciplinary Institute ANITI funding, through the ANR under the France 2030 program (grant ANR-23-IACL-0002), Chair TRIAL, Air Force Office of Scientific Research, Air Force Material Command, USAF, under grant numbers FA8655-22-1-7012. JB, EP and SV acknowledge support from ANR MAD. JB and EP thank TSE-P and acknowledge support from ANR Chess, grant ANR-17-EURE-0010, ANR Regulia. EP acknowledges support from IUF. SV thanks PEPR PDE-AI (ANR-23-PEIA-0004) and the chair 3IA BOGL (ANR-23-IACL-0001). TL is supported by the French ANR through the MIAI Cluster (ANR-23-IACL-0006).
{The authors warmly thank S. Dempe and A. Zemkoho for their useful comments.}

\bibliographystyle{plain}
\bibliography{references}

\begin{thebibliography}{10}

\bibitem{Allende2012SolvingBP}
Gemayqzel~Bouza Allende and Georg Still.
\newblock Solving bilevel programs with the kkt-approach.
\newblock {\em Mathematical Programming}, 138:309 -- 332, 2012.

\bibitem{allgower1997numerical}
Eugene~L Allgower and Kurt Georg.
\newblock Numerical path following.
\newblock {\em Handbook of numerical analysis}, 5:3--207, 1997.

\bibitem{antoniou2018how}
Antreas Antoniou, Harrison Edwards, and Amos Storkey.
\newblock How to train your {MAML}.
\newblock In {\em International Conference on Learning Representations}, 2019.

\bibitem{arbel:hal-03869097}
Michael Arbel and Julien Mairal.
\newblock {Non-Convex Bilevel Games with Critical Point Selection Maps}.
\newblock In {\em {NeurIPS 2022 - 36th Conference on Neural Information
  Processing Systems}}, Advances in Neural Information Processing Systems
  (NeurIPS) 2022, pages 1--34, New Orleans, United States, November 2022.

\bibitem{attouch2010proximal}
H{\'e}dy Attouch, J{\'e}r{\^o}me Bolte, Patrick Redont, and Antoine Soubeyran.
\newblock Proximal alternating minimization and projection methods for
  nonconvex problems: An approach based on the kurdyka-{\l}ojasiewicz
  inequality.
\newblock {\em Mathematics of operations research}, 35(2):438--457, 2010.

\bibitem{attouch2013Convergence}
H{\'e}dy Attouch, J{\'e}r{\^o}me Bolte, and Benar~Fux Svaiter.
\newblock Convergence of descent methods for semi-algebraic and tame problems:
  proximal algorithms, forward–backward splitting, and regularized
  gauss–seidel methods.
\newblock {\em Mathematical Programming}, 137:91--129, 2013.

\bibitem{bai2019deq}
Shaojie Bai, J.~Zico Kolter, and Vladlen Koltun.
\newblock Deep equilibrium models.
\newblock In H.~Wallach, H.~Larochelle, A.~Beygelzimer, F.~d\textquotesingle
  Alch\'{e}-Buc, E.~Fox, and R.~Garnett, editors, {\em Advances in Neural
  Information Processing Systems}, volume~32. Curran Associates, Inc., 2019.

\bibitem{bolte2018qualification}
J{\'e}r{\^o}me Bolte, Antoine Hochart, and Edouard Pauwels.
\newblock Qualification conditions in semialgebraic programming.
\newblock {\em SIAM journal on Optimization}, 28(2):1867--1891, 2018.

\bibitem{bolte2025convergence}
Jerome Bolte, Quoc-Tung Le, and Edouard Pauwels.
\newblock {Convergence of optimizers implies eigenvalues filtering at
  equilibrium}.
\newblock working paper or preprint, September 2025.

\bibitem{bolte2024geometric}
J{\'e}r{\^o}me Bolte, Quoc-Tung Le, Edouard Pauwels, and Samuel Vaiter.
\newblock Geometric and computational hardness of bilevel programming.
\newblock {\em arXiv preprint arXiv:2407.12372}, 2024.

\bibitem{bolte2024inexact}
J{\'e}r{\^o}me Bolte, Tam Le, {\'E}ric Moulines, and Edouard Pauwels.
\newblock {Inexact subgradient methods for semialgebraic functions}, April
  2024.
\newblock working paper or preprint.

\bibitem{chen2024finding}
Lesi Chen, Jing Xu, and Jingzhao Zhang.
\newblock On finding small hyper-gradients in bilevel optimization: Hardness
  results and improved analysis.
\newblock In {\em The Thirty Seventh Annual Conference on Learning Theory},
  pages 947--980. PMLR, 2024.

\bibitem{chen2021closing}
Tianyi Chen, Yuejiao Sun, and Wotao Yin.
\newblock Closing the gap: Tighter analysis of alternating stochastic gradient
  methods for bilevel problems.
\newblock In A.~Beygelzimer, Y.~Dauphin, P.~Liang, and J.~Wortman Vaughan,
  editors, {\em Advances in Neural Information Processing Systems}, 2021.

\bibitem{colson2007overview}
Beno{\^\i}t Colson, Patrice Marcotte, and Gilles Savard.
\newblock An overview of bilevel optimization.
\newblock {\em Annals of operations research}, 153:235--256, 2007.

\bibitem{coste2000introduction}
Michel Coste.
\newblock {\em An introduction to o-minimal geometry}.
\newblock Istituti editoriali e poligrafici internazionali Pisa, 2000.

\bibitem{coste2000introductionSA}
Michel Coste.
\newblock An introduction to semialgebraic geometry, 2000.

\bibitem{Dagreou2022SABA}
Mathieu Dagr{\'e}ou, Pierre Ablin, Samuel Vaiter, and Thomas Moreau.
\newblock A framework for bilevel optimization that enables stochastic and
  global variance reduction algorithms.
\newblock In {\em Advances in {{Neural Information Processing Systems}}
  ({{NeurIPS}})}, 2022.

\bibitem{Dagreou2023SRBA}
Mathieu Dagr{\'e}ou, Thomas Moreau, Samuel Vaiter, and Pierre Ablin.
\newblock A {{Lower Bound}} and a {{Near-Optimal Algorithm}} for {{Bilevel
  Empirical Risk Minimization}}.
\newblock In {\em International Conference on Artificial Intelligence and
  Statistics (AISTATS)}, 2024.

\bibitem{dempe2002foundations}
S.~Dempe.
\newblock {\em Foundations of Bilevel Programming}.
\newblock Nonconvex Optimization and Its Applications. Springer US, US, 2002.

\bibitem{dempe2012sensitivity}
S.~Dempe, B.~S. Mordukhovich, and A.~B. Zemkoho.
\newblock Sensitivity analysis for two-level value functions with applications
  to bilevel programming.
\newblock {\em SIAM Journal on Optimization}, 22(4):1309--1343, 2012.

\bibitem{dempe2020bilevel}
Stephan Dempe and Alain Zemkoho.
\newblock Bilevel optimization.
\newblock In {\em Springer optimization and its applications}, volume 161.
  Springer, 2020.

\bibitem{originalmaml}
Chelsea Finn, Pieter Abbeel, and Sergey Levine.
\newblock Model-agnostic meta-learning for fast adaptation of deep networks.
\newblock In {\em Proceedings of the 34th International Conference on Machine
  Learning - Volume 70}, ICML'17, page 1126–1135. JMLR.org, 2017.

\bibitem{Franceschi2018BilevelPF}
Luca Franceschi, Paolo Frasconi, Saverio Salzo, Riccardo Grazzi, and
  Massimiliano Pontil.
\newblock Bilevel programming for hyperparameter optimization and
  meta-learning.
\newblock In {\em International Conference on Machine Learning}, 2018.

\bibitem{Ghadimi2018ApproximationMF}
Saeed Ghadimi and Mengdi Wang.
\newblock Approximation methods for bilevel programming.
\newblock {\em arXiv: Optimization and Control}, 2018.

\bibitem{gilbert1992automatic}
Jean~Charles Gilbert.
\newblock {Automatic differentiation and iterative processes}.
\newblock {\em {Optimization Methods and Software}}, 1(1):13--21, 1992.

\bibitem{goudou2009gradient}
Xavier Goudou and Julien Munier.
\newblock The gradient and heavy ball with friction dynamical systems: the
  quasiconvex case. math. program. ser. b 116(1-2), 173-191.
\newblock {\em Math. Program.}, 116:173--191, 01 2009.

\bibitem{grazzi2020iteration}
Riccardo Grazzi, Luca Franceschi, Massimiliano Pontil, and Saverio Salzo.
\newblock On the iteration complexity of hypergradient computation.
\newblock In {\em International Conference on Machine Learning}, pages
  3748--3758. PMLR, 2020.

\bibitem{griewank2008evaluating}
Andreas Griewank and Andrea Walther.
\newblock {\em Evaluating derivatives: principles and techniques of algorithmic
  differentiation}.
\newblock SIAM, 2008.

\bibitem{guillemin2010differential}
V.~Guillemin and A.~Pollack.
\newblock {\em Differential Topology}.
\newblock AMS Chelsea Publishing. AMS Chelsea Pub., 2010.

\bibitem{kaiming2015delving}
Kaiming He, Xiangyu Zhang, Shaoqing Ren, and Jian Sun.
\newblock Delving deep into rectifiers: Surpassing human-level performance on
  imagenet classification.
\newblock In {\em 2015 IEEE International Conference on Computer Vision
  (ICCV)}, pages 1026--1034, 2015.

\bibitem{hendrion2011calmness}
René Henrion and Thomas Surowiec.
\newblock On calmness conditions in convex bilevel programming.
\newblock {\em Applicable Analysis}, 90:951--970, 06 2011.

\bibitem{ji2020convergence}
Kaiyi Ji, Jason~D Lee, Yingbin Liang, and H~Vincent Poor.
\newblock Convergence of meta-learning with task-specific adaptation over
  partial parameters.
\newblock {\em Advances in Neural Information Processing Systems},
  33:11490--11500, 2020.

\bibitem{ji2024lowerbound}
Kaiyi Ji and Yingbin Liang.
\newblock Lower bounds and accelerated algorithms for bilevel optimization.
\newblock {\em J. Mach. Learn. Res.}, 24(1), March 2024.

\bibitem{ji2022will}
Kaiyi Ji, Mingrui Liu, Yingbin Liang, and Lei Ying.
\newblock Will bilevel optimizers benefit from loops.
\newblock {\em Advances in Neural Information Processing Systems},
  35:3011--3023, 2022.

\bibitem{theory-multi-step}
Kaiyi Ji, Junjie Yang, and Yingbin Liang.
\newblock Theoretical convergence of multi-step model-agnostic meta-learning.
\newblock {\em J. Mach. Learn. Res.}, 23:29:1--29:41, 2020.

\bibitem{ji2021bilevel}
Kaiyi Ji, Junjie Yang, and Yingbin Liang.
\newblock Bilevel optimization: Convergence analysis and enhanced design.
\newblock In Marina Meila and Tong Zhang, editors, {\em Proceedings of the 38th
  International Conference on Machine Learning}, volume 139 of {\em Proceedings
  of Machine Learning Research}, pages 4882--4892. PMLR, 18--24 Jul 2021.

\bibitem{kolstad1990derivative}
Charles~D Kolstad and Leon~S Lasdon.
\newblock Derivative evaluation and computational experience with large bilevel
  mathematical programs.
\newblock {\em Journal of optimization theory and applications}, 65:485--499,
  1990.

\bibitem{kwon2023penalty}
Jeongyeol Kwon, Dohyun Kwon, Steve Wright, and Robert Nowak.
\newblock On penalty methods for nonconvex bilevel optimization and first-order
  stochastic approximation.
\newblock {\em arXiv preprint arXiv:2309.01753}, 2023.

\bibitem{lee2016gradient}
Jason~D. Lee, Max Simchowitz, Michael~I. Jordan, and Benjamin Recht.
\newblock Gradient descent only converges to minimizers.
\newblock In Vitaly Feldman, Alexander Rakhlin, and Ohad Shamir, editors, {\em
  29th Annual Conference on Learning Theory}, volume~49 of {\em Proceedings of
  Machine Learning Research}, pages 1246--1257, Columbia University, New York,
  New York, USA, 23--26 Jun 2016. PMLR.

\bibitem{guihua2014solving}
Gui-Hua Lin, Mengwei Xu, and Jane Ye.
\newblock On solving simple bilevel programs with a nonconvex lower level
  program.
\newblock {\em Mathematical Programming}, 144, 04 2014.

\bibitem{liu2019darts}
Hanxiao Liu, Karen Simonyan, and Yiming Yang.
\newblock {{DARTS}}: {{Differentiable Architecture Search}}.
\newblock In {\em International {{Conference}} on {{Learning Representations}}
  ({{ICLR}})}, 2019.

\bibitem{liu2021valuefunction}
Risheng Liu, Xuan Liu, Xiaoming Yuan, Shangzhi Zeng, and Jin Zhang.
\newblock A value-function-based interior-point method for non-convex bi-level
  optimization.
\newblock In Marina Meila and Tong Zhang, editors, {\em Proceedings of the 38th
  International Conference on Machine Learning, {ICML} 2021, 18-24 July 2021,
  Virtual Event}, volume 139 of {\em Proceedings of Machine Learning Research},
  pages 6882--6892, virtual, 2021. {PMLR}.

\bibitem{liu2021towards}
Risheng Liu, Yaohua Liu, Shangzhi Zeng, and Jin Zhang.
\newblock Towards gradient-based bilevel optimization with non-convex followers
  and beyond.
\newblock In A.~Beygelzimer, Y.~Dauphin, P.~Liang, and J.~Wortman Vaughan,
  editors, {\em Advances in Neural Information Processing Systems}, 2021.

\bibitem{liu2020generic}
Risheng Liu, Pan Mu, Xiaoming Yuan, Shangzhi Zeng, and Jin Zhang.
\newblock A generic first-order algorithmic framework for bi-level programming
  beyond lower-level singleton.
\newblock In {\em International Conference on Machine Learning}, Vienna,
  Austria, 2020.

\bibitem{lorraine2020MillionsofHyperparameters}
Jonathan Lorraine, Paul Vicol, and David Duvenaud.
\newblock Optimizing {{Millions}} of {{Hyperparameters}} by {{Implicit
  Differentiation}}.
\newblock In {\em International {{Conference}} on {{Artificial Intelligence}}
  and {{Statistics}} ({{AISTAT}})}, 2020.

\bibitem{maclaurin2015gradient}
Dougal Maclaurin, David Duvenaud, and Ryan Adams.
\newblock Gradient-based hyperparameter optimization through reversible
  learning.
\newblock In {\em International conference on machine learning}, pages
  2113--2122. PMLR, 2015.

\bibitem{mehmood2020automatic}
Sheheryar Mehmood and Peter Ochs.
\newblock Automatic differentiation of some first-order methods in parametric
  optimization.
\newblock In {\em International Conference on Artificial Intelligence and
  Statistics}, pages 1584--1594. PMLR, 2020.

\bibitem{merchav2024fastalgorithmconvexcomposite}
Roey Merchav, Shoham Sabach, and Marc Teboulle.
\newblock A fast algorithm for convex composite bi-level optimization, 2024.

\bibitem{Mordukhovich2020}
Boris~S. Mordukhovich.
\newblock Bilevel optimization and variational analysis.
\newblock In Stephan Dempe and Alain Zemkoho, editors, {\em Bilevel
  Optimization: Advances and Next Challenges}, pages 197--226. Springer
  International Publishing, Cham, 2020.

\bibitem{panageas2017gradient}
Ioannis Panageas and Georgios Piliouras.
\newblock Gradient descent only converges to minimizers: Non-isolated critical
  points and invariant regions.
\newblock In {\em 8th Innovations in Theoretical Computer Science Conference
  (ITCS 2017)}. Schloss Dagstuhl-Leibniz-Zentrum fuer Informatik, 2017.

\bibitem{pang1991minimization}
Jong-Shi Pang, Shih-Ping Han, and Narayan Rangaraj.
\newblock Minimization of locally lipschitzian functions.
\newblock {\em SIAM Journal on Optimization}, 1(1):57--82, 1991.

\bibitem{pmlr-v48-pedregosa16}
Fabian Pedregosa.
\newblock Hyperparameter optimization with approximate gradient.
\newblock In Maria~Florina Balcan and Kilian~Q. Weinberger, editors, {\em
  Proceedings of The 33rd International Conference on Machine Learning},
  volume~48 of {\em Proceedings of Machine Learning Research}, pages 737--746,
  New York, New York, USA, 20--22 Jun 2016. PMLR.

\bibitem{pemantle1990nonconvergence}
Robin Pemantle.
\newblock {Nonconvergence to Unstable Points in Urn Models and Stochastic
  Approximations}.
\newblock {\em The Annals of Probability}, 18(2):698 -- 712, 1990.

\bibitem{implicitMAML}
Aravind Rajeswaran, Chelsea Finn, Sham~M Kakade, and Sergey Levine.
\newblock Meta-learning with implicit gradients.
\newblock In H.~Wallach, H.~Larochelle, A.~Beygelzimer, F.~d\textquotesingle
  Alch\'{e}-Buc, E.~Fox, and R.~Garnett, editors, {\em Advances in Neural
  Information Processing Systems}, volume~32. Curran Associates, Inc., 2019.

\bibitem{rommel2022cadda}
C{\'e}dric Rommel, Thomas Moreau, Joseph Paillard, and Alexandre Gramfort.
\newblock {CADDA}: Class-wise automatic differentiable data augmentation for
  {EEG} signals.
\newblock In {\em International Conference on Learning Representations}, 2022.

\bibitem{savard1994steepest}
Gilles Savard and Jacques Gauvin.
\newblock The steepest descent direction for the nonlinear bilevel programming
  problem.
\newblock {\em Operations Research Letters}, 15(5):265--272, 1994.

\bibitem{shen2023penalty}
Han Shen and Tianyi Chen.
\newblock On penalty-based bilevel gradient descent method.
\newblock In Andreas Krause, Emma Brunskill, Kyunghyun Cho, Barbara Engelhardt,
  Sivan Sabato, and Jonathan Scarlett, editors, {\em Proceedings of the 40th
  International Conference on Machine Learning}, volume 202 of {\em Proceedings
  of Machine Learning Research}, pages 30992--31015. PMLR, 23--29 Jul 2023.

\bibitem{shub1987global}
M.~Shub, A.~Fathi, and R.~Langevin.
\newblock {\em Global Stability of Dynamical Systems}.
\newblock Springer, 1987.

\bibitem{tadic2011asymptotic}
Vladislav~B. Tadić and A.~Doucet.
\newblock Asymptotic bias of stochastic gradient search.
\newblock In {\em 2011 50th IEEE Conference on Decision and Control and
  European Control Conference}, pages 722--727, 2011.

\bibitem{o-minimal-structures}
Lou van~den Dries and Chris Miller.
\newblock {Geometric categories and o-minimal structures}.
\newblock {\em Duke Mathematical Journal}, 84(2):497 -- 540, 1996.

\bibitem{vicente1994bilevel}
Luis~N Vicente and Paul~H Calamai.
\newblock Bilevel and multilevel programming: A bibliography review.
\newblock {\em Journal of Global optimization}, 5(3):291--306, 1994.

\bibitem{ye1995optimality}
J.~J. Ye and D.~L. Zhu.
\newblock Optimality conditions for bilevel programming problems.
\newblock {\em Optimization}, 33(1):9--27, 1995.

\end{thebibliography}

\end{document}